\theoremstyle{plain}
\newtheorem{theorem}{Theorem}
\newtheorem{lemma}[theorem]{Lemma}
\newtheorem{corollary}[theorem]{Corollary}
\newtheorem*{unmques}{Question}
\newtheorem*{unmthm}{Theorem}
\theoremstyle{definition}
\theoremstyle{remark}
\newtheorem{remark}{Remark}
\def\Q{\mathbb{Q}}
\def\gp{\mathfrak{p}}
\def\GL{\mathrm{GL}}
\def\Nr{\mathrm{Nm}}
\def\A{\mathbbm A}
\def\R{\mathbb R}
\def\dim{\operatorname{dim}}
\def\b{\mathtt b}
\def\mp{m_{1}m_{2}}
\def\mc{m_{1},m_{2}}
\def\Z{\mathbb Z}
\def\Q{\mathbb Q}
\def\R{\mathbb R}
\def\A{\mathbb A}
\def\Sym{\operatorname{Sym}}
\def\dim{\operatorname{dim}}
\def\tr{\mathrm{Tr}}
\def\O{\operatorname{O}}
\def\o{\operatorname{o}}
\def\log{\operatorname{log}}
\def\gp{\mathfrak p}
\def\SU{\mathrm{SU}}
\begin{document} 
\title{A discrepancy result for Hilbert modular forms}

\author[Baskar Balasubramanyam]{Baskar Balasubramanyam}
\date{\today}
\address{Baskar Balasubramanyam, IISER Pune, Dr Homi Bhabha Road, Pashan, Pune - 411008, Maharashtra, India}
\email{baskar@iiserpune.ac.in}
\author[Jishu Das]{Jishu Das}
 \address{Department of Applied Sciences and Humanities, Mathematics Section, 
 COEP Technological University, Wellesley Road, Shivajinagar, Pune, Maharashtra 411005, India.}
\email{dasj.maths@coeptech.ac.in}
\author[Kaneenika Sinha]{Kaneenika Sinha}
\address{Kaneenika Sinha, IISER Pune, Dr Homi Bhabha Road, Pashan, Pune - 411008, Maharashtra, India}
\email{kaneenika@iiserpune.ac.in}
\keywords{Discrepancy, Equidistribution, Petersson trace formula, Hilbert modular forms}
\subjclass[2010]{11F41, 11F60, 11K06}

\begin{abstract}
Let $F $ be a totally real number field and $r=[F :\mathbb{Q}].$ Let $A_k(\mathfrak{N},\omega) $ be the space of holomorphic Hilbert cusp forms with respect to  $K_1(\mathfrak{N})$, of weight $k=(k_1,\dots,k_r)$ such that $k_j>2$ for all $j$, and with central Hecke character $\omega$. For integral ideals $\mathfrak{N}$ and $\mathfrak{n}$ in $F$ such that $( \mathfrak{n}, \mathfrak{N}) = 1$, we study the Petersson trace formula  for the Hecke operator $T_{\mathfrak{n}}$ acting on the space $A_k(\mathfrak{N},\omega)$.  We present asymptotic estimates for the terms of the Petersson formula  as $k_0\rightarrow\infty,$  where $k_0=\min(k_1,\dots,k_r)$. As an application, we obtain a weighted discrepancy bound for the distribution of the eigenvalues of the Hecke operator $T_{\mathfrak{p}}$ (for a fixed prime ideal $\mathfrak{p}$) acting on the space $A_k(\mathfrak{N},1),$ when $F$ has narrow class number $1$,  and the ideal $\mathfrak{N}$ is generated by (rational) integers.  This generalizes a discrepancy result previously obtained by Jung and Sardari \cite{JS} in the context of classical cusp forms.
\end{abstract}

\maketitle

\section{Introduction}

Let $F$ be a totally real number field of degree $r$ over $\Q$. Let $\A = \A_F$ denote the adele ring of $F$. Let $\pi$ be a regular, algebraic, cuspidal, automorphic representation for $\GL_2$ over $\A$. Let $v$ be a non-Archimedean place of $F$ where $\pi$ is unramified. Then, $\pi_v$ is determined by its Satake parameters $\alpha_{\pi} (v)$ and $\beta_{\pi} (v)$. The sum $\kappa_\pi (v) = \alpha_{\pi} (v) + \beta_{\pi} (v)$ is the normalised Hecke eigenvalue of the Hecke operator $T_v$ acting on $\pi_v$. The generalised Ramanujan conjecture states that $|\alpha_{\pi} (v)| = |\beta_{\pi} (v)| = 1$, which is further equivalent to the assertion that $|\kappa_\pi (v)| \leq  2$. This conjecture has been proved in the case when $\pi$ is holomorphic in a series of important results by \cite{Deligne}, \cite{Deligne-Serre}, \cite{Brylinski-Labesse}, \cite{Blasius} and \cite{Nguyen}.  We refer the reader to \cite{Li-2009} for a sequential description of these developments.    

In the case when $\pi$ is not of CM-type, the (generalized) Sato-Tate conjecture states that the Hecke eigenvalues 
$$
\{ \kappa_\pi (v) \mid v \textrm{ where } \pi_v \textrm{ is unramified}\}
$$
are equidistributed in the interval $[-2,2]$ with respect to the measure 
$$d\mu_{\infty}(t) := \frac{1}{\pi}\sqrt{1 - \frac{t^2}{4}}dt.$$
Here the representation $\pi$ is fixed and the primes $v$ vary.  This conjecture is now a theorem by the work of \cite{BGHT} for the case when $F = \Q$, and \cite{BGG} for any totally real field $F$ over $\Q$.  

In a related but different perspective, the ``vertical" Sato-Tate distribution law considers the equidistribution of these Hecke eigenvalues for a fixed prime $v$ with the representations $\pi$ varying. To make this precise, fix $v$ and let $\mathfrak N$ be an integral ideal of $F$ such that $ v  \nmid \mathfrak N$. Let $k = (k_1, \dots, k_r) \in \Z^r$ with $k_i \ge 2$. Let $\Pi_k (\mathfrak N)$ denote the set of cuspidal holomorphic automorphic representations with trivial central characters $\pi$ such that $\pi_{\mathrm{fin}} = \bigotimes_{v \nmid \infty} \pi_v$ has a non-zero $K_0 (\mathfrak N)$-fixed vector and $\pi_\infty = \otimes \pi_{k_i}$, where $\pi_{k_i}$ are the discrete series representations of weight $k_i$. In the language of classical Hilbert modular forms, the first condition is equivalent to saying that $\pi$ comes from a Hilbert modular form of conductor dividing $\mathfrak{N}$. In particular, $\pi_v$ is unramified for all $v \nmid \mathfrak N$. The second condition is equivalent to saying that $\pi$ comes from a holomorphic Hilbert cusp form of weight $k = (k_1, \dots, k_r)$. 

Let $\mathfrak{N_i}$ be a sequence of ideals (coprime to $v$) and $\mathfrak{N_i} \to \infty$ (i.e., the norms of these ideals go to infinity). The vertical Sato-Tate distribution theorem in this case states that the family of multisets
$$
S_i = \{ \kappa_\pi (v) \mid \pi \in \Pi_k (\mathfrak{N}_i) \}
$$
is equidistributed with respect to the measure
$$
d \mu_v (x) = \frac{\mathrm{Nm}(v) + 1}{\left({\mathrm{Nm}(v)}^{1/2} + {\mathrm{Nm}(v)}^{-1/2}\right)^2 - x^2 } d \mu_\infty (x).
$$
The vertical perspective was first introduced in the work of \cite{Sarnak} and \cite{Serre} in the classical setting of $F = \Q$.  While Sarnak \cite{Sarnak} addressed the case of Maass cusp forms, Serre \cite{Serre} studied the vertical framework in the case of holomorphic cusp forms as an application of general equidistribution principles.  
In the general context of totally real number fields $F$ with $[F:\Q] \geq 2$, the study of the equidistribution of the families $S_i$ was initiated in \cite{Li-2009}.  

The  equidistribution theorems in the horizontal and vertical aspects are obtained through different techniques.  The horizontal Sato-Tate distribution theorem for a fixed non-CM $\pi$ is obtained by viewing the Hecke eigenvalues $\{\kappa_{\pi}(v)\}$ as coefficients of the $L$-function corresponding to $\pi$, and showing that the symmetric power $L$-functions $L(s, \Sym^n\pi)$ for all $n \geq 1$ are potentially cuspidal automorphic.  On the other hand, in the vertical context, the families $S_i$ are viewed as the eigenvalues of the Hecke operator $T_{\mathfrak p}$ acting on $\Pi_k (\mathfrak N)$, where $\mathfrak p$ is the prime ideal corresponding to the non-Archimedean valuation $v$.   These families are studied with the help of a trace formula for the traces of the Hecke operators $T_{\mathfrak p^n}$ for all powers $n \geq 1$.  For $F = \Q$, the primary tool is the Eichler-Selberg trace formula.  We refer the reader to \cite{KL-traces} for a detailed description of the history and development of this formula, as well as a proof using the modern perspective of the Arthur-Selberg trace formula for $\GL_2$ over $\A_{\Q}$.  The adelic perspective forms the basis for a suitable generalization of the Eichler-Selberg trace formula to Hilbert modular forms \cite{Li-2009}. 

For the purpose of motivating the questions addressed in this article, we start with a review of some equidistribution results in the classical context of modular forms.
Let $S_k(N)$ denote the space of cusp forms of even integer weight $k$ with respect to $\Gamma_0(N)$.  Let $\mathcal{F}_k(N)$ be an orthonormal basis of $S_k(N)$ consisting of joint eigenfunctions of the Hecke operators $T_n$ with $(n,N)=1$. For  $f\in S_k(N)$,
the Fourier expansion of $f$ at the cusp $\infty$ is given by $$f(z)=\sum_{n=1}^{\infty} a_f(n)n^{\frac{k-1}{2}}  e^{2\pi i nz}.$$  We denote $\kappa_f(n)$ to be the $n$-th normalised Hecke eigenvalue of $f$.  That is, $\kappa_f(n)$ denotes the eigenvalue of the normalised Hecke operator $T_n/n^{(k-1)/2}$ acting on $f$.  Thus,  
$$a_f(n)=a_f(1)\kappa_f(n),\,(n,N) = 1.$$
Let $p$ be a fixed prime number with $\gcd\,(p,N)=1.$  By the Ramanujan-Deligne bound, we know that $\kappa_f(p) \in [-2,2]$.  

We define, on the interval $[-2,2]$, the measures 
$$d\mu_{\infty}(x) := \frac{1}{\pi}\sqrt{1 - \frac{x^2}{4}}dx,$$
and
$$d\mu_p(x):=\frac{p+1}{\pi}\frac{1}{(p^{1/2}+p^{-1/2})^2-x^2}d\mu_{\infty}(x).$$ 
For an interval $I \subset [-2,2]$, and for a fixed prime $p$, denote
$$\mu_{k,N}(I):= \frac{1}{|\mathcal{F}_k(N)|}\sum_{f\in \mathcal{F}_k(N)}\delta_{\kappa_f(p) \in I},$$
$$\mu_{\infty}(I) := \int_I d\mu_{\infty}(x),$$
and
$$\mu_p(I) := \int_I d\mu_p(x).$$

Serre \cite{Serre} proved that for a fixed prime $p$, the families of eigenvalues
$$\{\kappa_f(p) \mid f \in \mathcal F_k(N)\}$$
are equidistributed in the interval $[-2,2]$ with respect to the measure $d\mu_p(x)$
as $k+N\rightarrow \infty$ with $k$ even and $\gcd(p,N)=1.$  That is, for any interval $I \subset [-2,2]$,
\begin{equation}\label{v-d-Hecke}
\lim_{k+N\rightarrow \infty \atop {(p,N) = 1 \atop{k \text{ even}}}} \mu_{k,N}(I) = \mu_p(I).
\end{equation}
The above asymptotic for the case $N=1$ was also proved by Conrey, Duke and Farmer \cite{CDF}.  

Let $S_k(N)^*$ be the subspace of primitive cusp forms in $S_k(N)$.  Let $T^*_n$ be the restriction of Hecke operator $T_n$ on the subspace $S_k(N)^*$ of $S_k(N)$.  Let 
$\mathcal{F}_k(N)^*$ be the orthogonal basis of $S_k(N)^*$ consisting of joint eigenfunctions of the Hecke operators $T_n^*$, $(n,N) = 1$ such that $a_f(1) = 1$ for all $f \in \mathcal{F}_k(N)^*$.
We denote $\mu_{k,N}^*$ to be the corresponding measure on $[-2,2]$ associated to $T^*_p$.  That is, for an interval $I \subset [-2,2]$,
$$\mu_{k,N}^*(I):= \frac{1}{|\mathcal{F}_k(N)^*|}\sum_{f\in \mathcal{F}_k(N)^*}\delta_{\kappa_f(p) \in I}. $$
Using \eqref{v-d-Hecke}, it is not too difficult to deduce that for a fixed prime $p$, and for an interval $I \subset [-2,2]$,
\begin{equation}\label{v-d-Hecke-new}
\lim_{k+N\rightarrow \infty \atop {(p,N) = 1 \atop{k \text{ even}}}} \mu_{k,N}^*(I) = \mu_p(I).
\end{equation}
The above mentioned equidistribution theorems naturally lead to questions about the discrepancies 
$$D(\mu_{k,N},\mu_p) := \sup\left\{   |\mu_{k,N}(I)-\mu_p(I)|\, \Big|\, I=[a,b]\subset [-2,2] \Big. \right\}$$
and
$$D(\mu_{k,N}^*,\mu_p) := \sup\left\{   |\mu^*_{k,N}(I)-\mu_p(I)|\, \Big|\, I=[a,b]\subset [-2,2] \Big. \right\}.$$
These have been well investigated, along with applications (see \cite{GJS}, \cite{Royer}, \cite{Golubeva}, \cite{MSeffective}, \cite{MS2}, \cite{SZ}, \cite{JS}).  

In \cite{GJS}, Gamburd, Jakobson and Sarnak obtain the bound 
$$D(\mu_{k,2},\mu_p) = \O\left(\frac{1}{\log k}\right)$$
by a study of the eigenvalues of $\widehat{z}(\pi_k)$, as $k \to \infty$, where $z$ is an element in the group ring $\R[\SU(2])$, and $\pi_k$ is the irreducible representation of $\SU(2)$ of dimension $k+1$ (see \cite[Theorem 1.3]{GJS}). 

For $N=1$, Golubeva \cite{Golubeva} obtains the bound 
$$D(\mu_{k,1},\mu_p) = \O_{\epsilon}\left(\frac{1}{(\log k)^{1 - \epsilon}}\right)$$
for any $\epsilon > 0$.

This can be sharpened into the bound
\begin{equation}\label{error-Hecke}
D(\mu_{k,N},\mu_p)=\O\left(\frac{1}{\log kN} \right)
\end{equation}
\cite{MSeffective}, which holds  for all even positive integers $k$ and for all $N \geq 1$ such that $(p,N) = 1.$
A similar upper bound \cite{MS2} can also be obtained for $D(\mu_{k,N}^*,\mu_p)$:
\begin{equation}\label{error-Hecke-nf}
D(\mu_{k,N}^*,\mu_p)=\O\Big(\frac{1}{\log kN}\Big).
\end{equation}
The primary tool used in \cite{Golubeva} and  \cite{MSeffective} is the Eichler-Selberg trace formula, combined with an approximation of the characteristic function of an interval by certain smooth test functions.  Indeed, for any interval $I \subset [-2,2]$, and any positive integer $M \geq 1$, we have an Erd\"{o}s-Tur\'{a}n type of inequality: 
\begin{equation}\label{Erdos-Turan}
 |\mu_{k,N}(I)-\mu_p(I)| \leq \frac{b_1\dim(S_k(N))}{M+1} + \sum_{m=1}^M {b_{m,I} G(m)},
 \end{equation}
where $b_1$ and $b_{m,I}$ are positive constants derived from the above test functions, and $G(m)$ depends on the sum $\sum_{f \in \mathcal F_k(N)} \kappa_f(p^m)$, which is $\mathrm{Tr}\left(\frac{T_{p^m}}{{p^m}^{(k-1)/2}}\right)$.  We choose a value of $M$ to obtain an optimal bound for the right hand side of \eqref{Erdos-Turan}.  This technique works for all positive integers $N$ (coprime to $p$), and gives us the discrepancy bounds
\eqref{error-Hecke} and \eqref{error-Hecke-nf}, where the implied constants are absolute and effectively computable.  However, this method faces a barrier: the Eichler-Selberg trace-formula estimates for the sum $\sum_{f \in \mathcal F_k(N)} \kappa_f(p^m)$, when applied to \eqref{Erdos-Turan}, generate a non-trivial bound only for choices of $M$ such that $p^M \ll (kN)^c$ for some $0 < c < 2/3 $.

The application of the Petersson trace formula for the study of discrepancies has been explored in \cite{JS} and \cite{SZ}.   This formula evaluates the harmonic sums 
$$\sum_{f \in \mathcal F_k(N)} \omega_f \kappa_f(n_1)\overline{\kappa_f(n_2)},$$
where 
\begin{equation}\label{omega-f}
\,\omega_f := \frac{\Gamma(k-1)}{(4\pi)^{k-1}} |a_f(1)|^2,
\end{equation}
and $n_1$ and $n_2$ are positive integers coprime to $N$.  For an application to the current context, we choose $n_1 = p^m$ and $n_2 = 1$.  The perspective of \cite{SZ} is to extend the range of $m$ for a sharper estimation of the trace $\sum_{f \in \mathcal F_k(N)} \kappa_f(p^m)$ through the Petersson trace formula.  A weight-removal technique is applied to the above sum by interpreting the weight $\omega_f$ in terms of the special value of the symmetric square $L$-function $L(s,\Sym^2f)$ at $s=1$, and by a suitable approximation for $L(s,\Sym^2f)$.  This approach was followed in \cite{SZ} to derive bounds for the discrepancies $D(\mu_{k,N},\mu_p)$ and $D(\mu^*_{k,N},\mu_p)$ in the case when $k = 2$.  In the context of Hecke-Maass forms, the use of the analogous Kuznetsov trace formula and weight removal to obtain the requisite discrepancy bounds have been explored in \cite{LW} and \cite{TW}.

 \begin{unmques}Is it possible to improve the logarithmic saving in \eqref{error-Hecke} to a power saving in the bound for $D(\mu_{k,N},\mu_p)$ (and likewise, for $D(\mu_{k,N}^*,\mu_p)$)? That is, does there exist $0 < \theta <1$ such that 
 $$D(\mu_{k,N},\mu_p) = \O\left(\frac{1}{(kN)^{\theta}}\right)?$$
 \end{unmques}
 Related to the above question is that of obtaining $\Omega$-type estimates for these discrepancies.  In what follows, we restrict our attention to the spaces $S_k(N)^*$.
\begin{unmques}
Can we find a function $E(k,N)$ for positive integers $(k, N)$ such that 
$$D(\mu_{k,N}^*,\mu_p) = \Omega (E(k,N))\text{ as }k + N \to \infty ?$$
That is, can we find a sequence $(k_{n},N_{n})_{n \geq 1}$ such that $k_{n}$ is even, $(p,N_{n}) = 1$ and 
$$D(\mu_{k_{n},N_{n}}^*,\mu_p) \gg E\left(k_{n},N_{n}\right) \text{ as }k_{n} + N_{n} \to \infty ?$$
\end{unmques}

The above question was addressed for $N=2$ by Gamburd, Jakobson and Sarnak \cite{GJS}, and for all squarefree levels $N$ by Jung and Sardari \cite{JS}.  In \cite{GJS}, it is shown that there exists a sequence of even integers $k_n \to \infty$ such that
\begin{equation}\label{Discrepancy-unw2}
D(\mu_{k_n,2}^*,\mu_p)\gg \frac{1}{k_n^{\frac{1}{2}}\log^2 k_n}.
\end{equation} 
Jung and Sardari \cite[Theorem 1.6]{JS} generalize the above result to any fixed squarefree level $N$ with an improved exponent for $k_n$.  That is, given a fixed squarefree level $N$, they obtain a sequence of weights $k_n$ with $k_n \rightarrow \infty $ such that
\begin{equation}\label{Discrepancy-unw}
D(\mu_{k_n,N}^*,\mu_p)\gg \frac{1}{k_n^{\frac{1}{3}}\log^2 k_n}.
\end{equation} 
This tells us that if the answer to the first question above is affirmative, then $\theta < 1/3$.

Define
$$\mathcal H_k(N)^* := \sum_{f \in \mathcal F_k(N)^*} \omega_f,$$
with $\omega_f$ as in \eqref{omega-f}.

We define the Borel measure $\nu_{k,N}^*$ on $[-2,2]$ as follows: for an interval $I \subset [-2,2]$, 
$$\nu_{k,N}^*(I):= \frac{1}{\mathcal H_k(N)^*}\sum_{f \in \mathcal{F}^*_k(N) } \omega_f \,\delta_{\kappa_f(p) \in I}.$$
Using the Petersson trace formula, one can show a weighted variant of \eqref{v-d-Hecke-new}, namely that
\begin{equation}\label{v-d-Hecke-new-weighted}
\lim_{k+N\rightarrow \infty \atop {(p,N) = 1 \atop{k \text{ even}}}} \nu_{k,N}^*(I) = \mu_{\infty}(I).
\end{equation}
We refer the interested reader to \cite{Li}, \cite{OM} and \cite{KL1} for a discussion of weighted distribution theorems for Hecke eigenvalues, other  variants and analogues for Hecke-Maass cusp forms.
 
 In the context of discrepancies, Jung and Sardari obtain, for a fixed squarefree level $N$, a sequence of weights $k_n$ with $k_n \rightarrow \infty $ such that the lower bound 
\begin{equation}\label{Discrepancy}
    D(\nu_{k_n,N}^*,\mu_\infty)\gg \frac{1}{k_n^{\frac{1}{3}}\log^2 k_n}
\end{equation}
holds. The transition from \eqref{Discrepancy} to \eqref{Discrepancy-unw} is made with the help of an explicit asymptotic version of the Petersson trace formula.  Recently, an extension of \eqref{Discrepancy} to any positive integer $N$ which is not divisible by $8$ was obtained in \cite[Theorem 1]{JD}. 

 The discrepancy bound in \eqref{error-Hecke} was extended to the context of Hilbert modular forms in \cite{LLW}.  This gives us a quantitative version of the vertical equidistribution result of \cite{Li-2009} with effective error terms.  The strategy of proof in \cite{LLW} is to use a ``higher-dimensional" variant of the Erd\"{o}s-Turan inequality (see \cite[Section 7]{Li-2009}).  The analogue of the term $G(m)$ in the case of Hilbert modular forms is then estimated with the help of the Arthur-Selberg  trace formula (see \cite[Section 3]{LLW}).
 
However, the extension of the $\Omega$-type bounds in \eqref{Discrepancy-unw} and \eqref{Discrepancy} to Hilbert modular forms has not been addressed yet.  In this article, we initiate a discussion on the extension of \eqref{Discrepancy} to Hilbert modular forms.  We first recall the following weighted equidistribution result of Knightly and Li \cite[Theorem 1.1]{KL}.

\begin{unmthm}[\cite{KL}]
Let $F$ be a totally real number field, and let $m$ be a totally positive element of the inverse different $\mathfrak{d}^{-1}\subset F.$  For a cusp form $\phi $ on $\GL_2(F) \backslash \GL_2({\A}_F)$ with trivial central character, let $W_m^\phi $ denote its $m$-th Fourier coefficient (see \eqref{eqn: Fourier-coefficient}). The weight associated to the cusp form $\phi$ is defined as
$$
w_\phi:=\frac{|W_m^\phi(1)|^2}{\| \phi\|^2},
$$
where $\| \phi\|$ is the Petersson norm of $\phi$.
 
For an integral ideal $\mathfrak{N}$ of $\mathcal{O}$, let $A_k(\mathfrak{N},1)$ denote the space of holomorphic Hilbert cusp forms of weight $ k = (k_1,k_2,\dots,k_r)$ (each $k_i >2$ and even) with respect to the Hecke congruence subgroup $\Gamma_0(\mathfrak{N})$.  

 For integral ideals $\mathfrak{n}$ and $\mathfrak{N}$ such that $(\mathfrak{n},\mathfrak{N}) = 1$, let $T_\mathfrak{n}$ denote the $\mathfrak{n}$-th Hecke operator acting on $A_k(\mathfrak{N},1)$.  Let $\mathcal F_k(\mathfrak{N})$ be a Hecke eigenbasis of $A_k(\mathfrak{N},1)$, that is, a basis of $A_k(\mathfrak{N},1)$ consisting of simultaneous eigenfunctions of the Hecke operators $T_\mathfrak{n}.\,$
 
For each $\phi \in \mathcal F_k(\mathfrak{N})$, let $\lambda^\phi_{\mathfrak{n}}$ be an eigenvalue for the Hecke operator $T_{\mathfrak{n}}$ with eigenvector $\phi$.  Let $\kappa^\phi_{\mathfrak{n}}:=\frac{\lambda^\phi_{\mathfrak{n}}}{\sqrt{\Nr(\mathfrak{n})}}.$
 
Consider a fixed prime ideal $\mathfrak{p}$ not dividing $m\mathfrak{d}$.  For an interval $I \subset [-2,2]$, define
\begin{equation}\label{nu-HF-1}
{\nu}_{k,\mathfrak{N}}(I) := \frac{1}{\mathcal H_k(\mathfrak{N})}\sum_{\phi \in \mathcal F_k(\mathfrak{N})}\omega_{\phi}\delta_{\kappa^\phi_{\mathfrak{p}} \in I},
\end{equation}
where
$$\mathcal H_k(\mathfrak{N}) = \sum_{\phi \in \mathcal F_k(\mathfrak{N})}\omega_{\phi}.$$
Then,
$$
\lim_{\Nr(\mathfrak{N})\rightarrow \infty \atop {(\mathfrak{p}, \mathfrak{N}) = 1}} {\nu}_{k,\mathfrak{N}}(I) = \mu_{\infty}(I).
$$
\end{unmthm}

This leads to questions about the discrepancies $D(\nu_{k,\mathfrak{N}},\mu_{\infty})$.  In the current article,  our main goal is to generalize equation \eqref{Discrepancy} to the context of  Hilbert cusp forms. The proof of  \eqref{Discrepancy} follows from asymptotic estimates for the Petersson trace formula for the Hecke operators $T_n$ acting on the space $S_k(N)^*$ where the level $N$ is square-free, and $n$ lies in a certain range dependent on $k$.  Thus, the first theorem in this article is an asymptotic version of the Petersson trace formula for Hilbert cusp forms.  We review some notation before the statement of the theorem.

\begin{itemize}
\item Let $F^{+}$ denote the set of totally positive elements of $F$. Let $\A$ (or $\A_F$) denote the adele ring of $F$. Since $F$ is fixed throughout this manuscript, we usually omit it from the notation.
\item Let $\mathcal{O}^\times$ denote the unit group of $F$, and let $U$ be a fixed set of representatives for $\mathcal{O}^\times/{\mathcal{O}^\times}^2$.  Note that $U$ is a finite set and $|U| = 2^r$, where $[F:\Q]=r$. 
\item Let $\sigma_1, \dots, \sigma_r$ be the embeddings of $F$ into $\R$ and let $\sigma = (\sigma_1, \dots, \sigma_r) : F \to \R^r$.  
\item Let $\mathfrak{n}$ and $\mathfrak{N}$ be ideals in $\mathcal O$ such that $( \mathfrak{n}, \mathfrak{N}) = 1$.  
\item For a non-Archimedean place $v$ of $F$, let $\mathcal O_v$ denote the ring of integers in the local field $F_v$.  Let $\widehat{\mathcal O} = \prod_{v < \infty}{\mathcal O}_v.$ 
\item For a fractional ideal $\mathfrak a \subset F$, let $\mathfrak a_v$ denote its localization.  We write 
$$\widehat{\mathfrak{a}} = \mathfrak a \widehat{\mathcal O} = \prod_{v < \infty} \mathfrak a_v.$$
Further, let $[\mathfrak a]$ denote the image of $\mathfrak a$ in the ideal class group.
\item Consider the equation $1 = [\mathfrak{b}]^2[\mathfrak{n}]$ in terms of ideal class groups, and let 
$$[\mathfrak{b}_1],\,[\mathfrak{b}_2],\,\dots,\,[\mathfrak{b}_t]$$
be solutions of the above equation.   We choose $\eta_i \in F$ such that $\eta_i$ generates the principal ideal $\mathfrak{b}_i^2\mathfrak{n}$. Also, $\b_i$ is such that $\b_i \Hat{\mathcal{O}}=\Hat{\mathfrak{b}}_i$ for the  integral ideal $\mathfrak{b}_i.$ 
\item  Let $\|x\|$ denote the standard Euclidean norm of $x\in \R^r$.  Let us consider 
\begin{align}\label{delta_i}
\delta_i =\inf\{\|\sigma(s)\|  \,\mid \, s\in \mathfrak{b_i}\mathfrak{N}/\pm, s\neq 0 \},
\end{align}
where each $\mathfrak{b_i}$ is as defined above.  We also denote $\tilde{\delta}_i=\frac{\delta_i}{2\sqrt{r}}$.
\item  
We define
\begin{align}\label{A_i}
A_i=\bigcap_{j=1}^r\{  s\in \mathfrak{b_i}\mathfrak{N}/\pm \,\mid\, |\sigma_j(s)|\leq  2 \delta_i,\ s\neq 0\}.
\end{align} 
For each $i$, $A_i$ is a discrete bounded set in $\R^r$, hence finite. This will contribute to the main term in our estimates below. 
\item We define
$$\gamma_{j}=\max \left \{\sqrt{\sigma_{j}(\eta_i u)} \,\mid \,i =1,\,...\,,t  ,u\in U, \eta_i u\in F^+\right \} .$$   
\end{itemize}

\begin{theorem}\label{Main theorem}
Let $\mathfrak{N}$ and $\mathfrak{n}$ be fixed integral ideals in $F$ such that $( \mathfrak{n}, \mathfrak{N}) = 1$.  Let $\omega:\,F^{\times} \backslash \mathbb A^{\times} \to \mathbb C^{\times}$ be a unitary Hecke character. Let the conductor of $\omega$ divide $\mathfrak{N}$ and $\omega_{\infty_j}(x)=\mathrm{sgn} (x)^{k_j} $ for all $j=1,\dots, r$.   For $k = (k_1,k_2,\dots, k_r)$ with all $k_j >2$, let $A_k(\mathfrak{N},\omega)$ denote the space of Hilbert cusp forms of weight $k$ and character $\omega$ with respect to $K_1(\mathfrak{N})$ (see equation \eqref{K_1-def} for the definition of $K_1(\mathfrak{N})$) and let $\mathcal{F}$ be an orthogonal basis for $A_{k}(\mathfrak{N},\omega)$ consisting of eigenfunctions of the Hecke operator $T_{\mathfrak{n}}$.  Let $A_i$'s and $\tilde{\delta}_i$  be as defined above and $k_0=\min \{ k_j \,  | \, j\leq r\}$. Suppose $m_1,m_2 \in \mathfrak{d}^{-1}_+$  and  satisfy
\begin{equation}\label{m1jm2j}
\frac{2\pi \gamma_j\sqrt{\sigma_{j}(m_1m_2)}}{\tilde{\delta}_i}\in \Big((k_{j}  -1)-(k_{j}  -1)^{\frac{1}{3}},(k_{j}  -1)\Big) \text{ for all }j \leq r.
\end{equation}
Then,  as $k_0\rightarrow  \infty$,
$$ 
\frac{e^{2\pi \tr_{\mathbb{Q}}^F (m_1+m_2)}}{{\psi(\mathfrak{N})}}
 \Bigg[\prod_{j=1}^r  \frac{(k_j-2)!}{(4\pi \sqrt{\sigma_j(m_1m_2)})^{k_j-1}} \Bigg]
\sum_{{\phi} \in \mathcal{F} }\frac{\lambda_\mathfrak{n}^\phi W_{m_1}^\phi(1) \overline{W_{m_2}^\phi(1)}}{\|\phi \|^2}
 $$
$$=\, \Hat{T}(m_1,m_2,\mathfrak{n})\frac{\sqrt{d_F\Nr(\mathfrak{n})}}{\omega_\mathfrak{N}(m_1/s)\omega_{\mathrm{f}}(s)}
+ \sum_{i=1}^t \sum_{u\in U, \eta_i u\in F^+}\sum_{s\in A_i }\Bigg\{ \omega_{\mathrm{f}}(s\b_i^{-1} ) S_{\omega_\mathfrak{N}} (m_1,m_2;\eta_i u \b_i^{-2};s\b_i^{-1})
 $$$$ 
\frac{\sqrt{\Nr(\eta_i u)}}{\Nr(s)}\times  \prod_{j=1}^r\frac{2\pi}{(\sqrt{-1})^{k_j}}J_{k_j-1} \Big(  \frac{4 \pi\sqrt{\sigma_j (\eta_i u m_1m_2 )}}{|\sigma_j(s)|}\Big)
 \Bigg\}+\O_{A_i,u,\eta_i} \left(\sum_{j=1}^r \left( (k_{j}-1)^{-\frac{1}{2}}\prod_{\substack{j'=1\\ j'\neq j}}^r(k_{j'}-1)^{-\frac{1}{3}}\right)\right),$$
for all $j$. 
 A detailed description of all the terms above is provided in \S\ref{sec: Peetersson-trace-formula}.
\end{theorem}
\begin{remark}
Here are some features and comments for the above theorem. 
\begin{itemize}
\item We note that the values of  $m_1$ and $m_2$ are dependent on $k$.  Since the lengths of the intervals $ \left((k_{j}  -1)-(k_{j}  -1)^{\frac{1}{3}},(k_{j}  -1)\right)$ 
go to infinity, we can always choose infinitely many $m_1,m_2$ satisfying \eqref{m1jm2j}, as $k_0 \to \infty$.
\item The main idea of the proof is to use the Petersson trace formula in the Hilbert modular setting, which was first stated in \cite{KL} and is recalled in this article as Theorem \ref{Petersson-Trace-Formula}.  We then apply Lemma \ref{Bessel-estimate} carefully using a combinatorial arithmetic argument to arrive at the estimate. 
\item  Theorem \ref{Main theorem} is an analogue of \cite[Theorem 1.7]{JS} for the space $A_{k}(\mathfrak{N},\omega)$.  See also \cite[Theorem 4(i)]{JD}. 
\end{itemize}
\end{remark}

In our next result, we find a lower bound for the main term in Theorem \ref{Main theorem} under additional assumptions that $F$ has odd narrow class number and the ideal $\mathfrak{b}_1\mathfrak{N}= \tilde{s}\mathcal{O} $, for $\tilde{s}\in \mathbb{Z}$.  
For details about notations in the next theorem, see \S\ref{sec: Peetersson-trace-formula}.
\begin{theorem}\label{Single term}
 We continue with the notations and assumptions of Theorem \ref{Main theorem}. In addition, let $F$ have odd narrow class number.  Further let $\mathfrak{b}_1\mathfrak{N}=\tilde{s}\mathcal{O} $ with $\tilde{s}\in \mathbb{Z}$. The following statements are true.
 \begin{itemize}
 \item[{\bf(a)}] The main term in Theorem \ref{Main theorem} is equal to
\begin{multline*}
\Hat{T}(m_1,m_2,\mathfrak{n})\frac{\sqrt{d_F\Nr (\mathfrak{n})}}{\omega_\mathfrak{N}(m_1/s)\omega_{\mathrm{f}}(s)} + \Bigg\{ \omega_{\mathrm{f}}(s\b_1^{-1} ) S_{\omega_\mathfrak{N}} (m_1,m_2;\eta_1 \b_1^{-2};s\b_1^{-1})  \frac{\sqrt{\Nr (\eta_1)}}{\Nr(s)} \\\times \prod_{j=1}^r\frac{2\pi}{(\sqrt{-1})^{k_j}}J_{k_j-1} \Big(  \frac{4 \pi\sqrt{\sigma_j (\eta_1 m_1m_2 )}}{|\sigma_j(s)|}\Big)
 \Bigg\} 
 \end{multline*}
 where $s=|\tilde{s}|.$
  \item[{\bf(b)}]   
Further assume that  $S_{\omega_\mathfrak{N}} (m_{1},m_{2};\eta_1 \b_1^{-2};s\b_1^{-1})\neq 0$ for all $m_1,m_2 \in \mathfrak{d}^{-1}_+$. Then, 
\begin{align*}
 \Bigg| \frac{e^{2\pi tr_{\mathbb{Q}}^F ( m_{1}+ m_{2})}}{{\psi(\mathfrak{N})}} \prod_{j=1}^r  \frac{(k_j-2)!}{(4\pi \sqrt{|\sigma_j( m_{1} m_{2})|})^{k_j-1}} 
& \sum_{{\phi}\in \mathcal{F} }\frac{\lambda_\mathfrak{n}^\phi W_{ m_{1}}^\phi(1) \overline{W_{ m_{2}}^\phi(1)}}{\|\phi\|^2}\\ - \Hat{T}( m_{1}, m_{2},\mathfrak{n})\frac{\sqrt{d_F\Nr(\mathfrak{n})}}{\omega_\mathfrak{N}( m_{1}/s)\omega_{\mathrm{f}}(s)}\Bigg| 
 &\gg \prod_{j=1}^r (k_j-1)^{-\frac{1}{3}},
\end{align*}
as $k_0\rightarrow  \infty.$
\end{itemize}
\end{theorem}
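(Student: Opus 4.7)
The plan is to whittle the triple sum in Theorem \ref{Main theorem} down to a single surviving summand, using the odd narrow class number hypothesis and the integrality of the generator $\tilde{s}$, and then verify that this lone term realises the Airy-type lower bound coming from the Bessel function in its transition regime.

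For part (a), I would carry out the reduction in three stages. First, odd narrow class number forces both the narrow and wide class numbers of $F$ to be odd (since $|Cl^+/Cl|$ divides $2^r$); squaring is then a bijection on the class group, so $[\mathfrak{b}]^2 = [\mathfrak{n}]^{-1}$ has the unique solution $[\mathfrak{b}_1]$ and $t = 1$. Second, the signature map $U = \mathcal{O}^\times/(\mathcal{O}^\times)^2 \to \{\pm 1\}^r$ is bijective under odd narrow class number (combine the exact sequence $1 \to \mathcal{O}^\times_+/(\mathcal{O}^\times)^2 \to \mathcal{O}^\times/(\mathcal{O}^\times)^2 \to \{\pm 1\}^r \to Cl^+/Cl \to 1$ with $\mathcal{O}^\times \cong \{\pm 1\} \times \mathbb{Z}^{r-1}$), so exactly one $u_0 \in U$ satisfies $\eta_1 u_0 \in F^+$ and the $u$-sum collapses. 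Third, since $\mathfrak{b}_1\mathfrak{N} = \tilde{s}\mathcal{O}$, every nonzero $s \in \mathfrak{b}_1\mathfrak{N}/\pm$ takes the form $\tilde{s}\alpha$ for $\alpha \in \mathcal{O}/\pm$, and AM--GM together with the absence of nontrivial roots of unity in a totally real field gives $\|\sigma(\tilde{s}\alpha)\|^2 \geq r\tilde{s}^2$ with equality iff $\alpha = \pm 1$. Hence $\delta_1 = |\tilde{s}|\sqrt{r}$, $\tilde{\delta}_1 = |\tilde{s}|/2$, and $|\tilde{s}|$ is the distinguished minimum-norm element of $A_1$.

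The main obstacle is showing that the other $s \in A_1 \setminus \{|\tilde{s}|\}$ all contribute $o\bigl(\prod_j (k_j-1)^{-1/3}\bigr)$, so that they are absorbed into the existing error term of Theorem \ref{Main theorem}. The set $A_1$ is finite by discreteness of $\sigma(\mathcal{O})$ in $\mathbb{R}^r$, and each such $s = \tilde{s}\alpha$ has $\alpha \neq \pm 1$; minimising over this finite set I obtain a uniform gap $c_F > 1$ with $\max_j|\sigma_j(\alpha)| \geq c_F$. Choosing an embedding $j_0$ realising this maximum, the Bessel argument is at most $(k_{j_0}-1)/c_F$, safely inside the exponentially decaying regime. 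The Debye-type uniform bound from Lemma \ref{Bessel-estimate} then gives $|J_{k_{j_0}-1}(\cdot)| \ll e^{-C k_{j_0}}$, while the remaining factors are at worst $O((k_j-1)^{-1/3})$ by the standard transition-region bound. The total product is thus exponentially smaller than $\prod_j(k_j-1)^{-1/3}$, and summing over the finite $A_1 \setminus \{|\tilde{s}|\}$ preserves this, completing part (a).

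For part (b), I apply part (a) to reduce the left-hand side (after subtracting the $\hat{T}$-term) to the single Kloosterman--Bessel term up to $o\bigl(\prod_j(k_j-1)^{-1/3}\bigr)$. Since $t=1$, there is a unique $u_0$, and the Bessel argument at embedding $j$ is $x_j = \frac{2\pi\gamma_j\sqrt{\sigma_j(m_1m_2)}}{\tilde{\delta}_1}$, which by the hypothesis of Theorem \ref{Main theorem} lies in $((k_j-1)-(k_j-1)^{1/3},\,k_j-1)$. Writing $x_j = (k_j-1) - t_j(k_j-1)^{1/3}$ with $t_j \in (0,1)$, Olver's uniform asymptotic
\[
J_\nu\bigl(\nu + c\nu^{1/3}\bigr) \sim \Bigl(\frac{2}{\nu}\Bigr)^{1/3}\mathrm{Ai}\bigl(-2^{1/3}c\bigr)
\]
gives $J_{k_j-1}(x_j) \sim (2/(k_j-1))^{1/3}\mathrm{Ai}(2^{1/3}t_j)$. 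Since $\mathrm{Ai}$ is strictly positive on $[0,\infty)$ and $t_j \in (0,1)$, we have $\mathrm{Ai}(2^{1/3}t_j) \geq \mathrm{Ai}(2^{1/3}) > 0$, hence $|J_{k_j-1}(x_j)| \gg (k_j-1)^{-1/3}$ uniformly. Combined with the nonvanishing of the Kloosterman sum by hypothesis and the boundedness away from zero of the remaining prefactors ($\omega_{\mathrm{f}}(s\mathfrak{b}_1^{-1})$ of absolute value one, $\sqrt{\Nr(\eta_1)}/\Nr(s)$ a fixed nonzero real), the single term has absolute value $\gg \prod_j(k_j-1)^{-1/3}$, dominating the $o$-error and yielding the lower bound claimed in (b).
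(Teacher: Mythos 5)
Your reduction of the $i$-sum and the $u$-sum matches the paper: odd narrow class number gives $t=1$ by bijectivity of squaring on $Cl(F)$, and your exact-sequence argument for $\dim_2 U^+ = 0$ is a self-contained version of what the paper imports from \cite[Prop.~2.4]{EMP}. Part (b) is also in the spirit of the paper's Corollary~\ref{lower bound}: you use Olver's Airy-transition asymptotic where the paper quotes Lemma~\ref{Bessel-estimate}(iii), but these encode the same lower bound $J_a(a + da^{1/3}) \gg a^{-1/3}$ for $|d|<1$.

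The gap is in part (a), in your handling of the $s$-sum. You show $\delta_1 = |\tilde{s}|\sqrt{r}$ via AM--GM and Kronecker's theorem, correctly, but then propose to bound the contribution of a putative set $A_1 \setminus \{\sigma(|\tilde{s}|)\}$ by $o\bigl(\prod_j(k_j-1)^{-1/3}\bigr)$ and absorb it into the error. That would only establish the main term up to an $o(\cdot)$ error, whereas Theorem~\ref{Single term}(a) asserts literal equality. In fact no bounding is needed, because $A_1$ is exactly a singleton and your own calculation already proves it: unwinding $A_1 = \{s : |\sigma_j(s)| \le 2\tilde{\delta}_1 = |\tilde{s}| \text{ for all } j,\ s \ne 0\}$, any $s \in A_1$ has $\|\sigma(s)\|^2 \le r|\tilde{s}|^2 = \delta_1^2$, and since $\delta_1$ is the infimum, equality holds in every embedding, forcing $|\sigma_j(s)| = |\tilde{s}|$ for all $j$ and hence $s = \pm\tilde{s}$. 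This is precisely the content of the paper's Lemmas~\ref{A1} and~\ref{A1A1}. Your Bessel estimate for the ``other'' elements would also fail to give exact equality as a matter of principle, since an exponentially small term is not zero. Once you recognize $|A_1| = 1$, your own argument finishes part (a) cleanly without any Bessel bounding, and part (b) goes through unchanged.
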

\begin{remark}
        For squarefree $N$, the main term in \cite[Theorem 1.7]{JS} is $$2\pi i^{-k}\frac{\mu(N)}{N} \prod_{p\mid N}  (1-p^{-2})J_{k-1}(4\pi\sqrt{mn}). $$ Note that $\mu(N)=S(1,0, N),$ where $S(a,b,c)$ denotes the classical Kloosterman sum, which is nonzero if and only if $N$ is squarefree.

In our context, we show  in Lemma \ref{Kloosterman sum nonzero} that for some specific values of $\eta_1,\b_1$ and $s$ and the trivial character $\omega_\mathfrak{N}$, the Kloosterman sum  $S_{\omega_\mathfrak{N}} (m_{1},m_{2};\eta_1 \b_1^{-2};s\b_1^{-1})\neq 0$ for all $m_1,m_2 \in \mathfrak{d}^{-1}_+$.  
\end{remark}
The main idea behind the proof of Theorem \ref{Single term} is to reduce the triple sum 
$$
\sum_{i=1}^t \sum_{u\in U, \eta_i u\in F^+}\sum_{s\in A_i }
$$
in Theorem \ref{Main theorem}  into a single term where finding a lower bound is possible. Similar lower bounds are obtained in \cite{Das2026} when $F$ is a multi-quadratic field.

   Now we consider the following application of Theorem \ref{Single term}. An analogue of $\nu_{k, N}$ for Hilbert cusp forms can be defined in the following manner. Let 
\begin{equation}\label{nu-HF-2}
\tilde{\nu}_{k,\mathfrak{N}}:=\prod_{j=1}^r  \frac{(k_j-2)!}{  (4\pi)^{k_j-1}}
\sum_{{\phi}\in \mathcal{F} }\frac{\delta_{\kappa^\phi_{\mathfrak{p}}} }{\|\phi\|^2}, 
\end{equation}
where  $\mathcal{F}$ is an orthogonal basis for $A_{k}(\mathfrak{N},\omega)$ consisting of eigenfunctions of the Hecke operator $T_{\mathfrak{p}}$ with $(\mathfrak{N},\mathfrak{p})=1.$
\begin{remark}
    Note that the measure considered in \eqref{nu-HF-2} is a variant of the measure considered \eqref{nu-HF-1}. To see the choices for $m_1, m_2$ and $\mathfrak{n}$ that were made to obtain $\tilde{\nu}_{k,\mathfrak{N}}$, the reader is referred to \S\ref{sec: proof-Main-theorem-2}.
\end{remark}

We obtain the following variant of \eqref{Discrepancy} (\cite[Theorem 1.6]{JS}) for the space $A_k(\mathfrak{N},1)$.

\begin{theorem}\label{Main theorem 2}
   We continue with the notations and assumptions of Theorem \ref{Main theorem}. Let $F$ have narrow class number $1$. Let  $\mathfrak{b}_1\mathfrak{N}=\tilde{s}\mathcal{O} $ with $\tilde{s}\in \mathbb{Z}$ such that $|\tilde{s}|$ is squarefree and  $\omega_{\mathfrak{N}}$ is trivial.  Let   $\mathfrak{p} = \tilde{p}\mathcal{O}$ be a prime ideal such that $|\sigma_j(\tilde{p})| > 1 $ for all $j = 1, \dots, r$.
  There exists an infinite sequence of weights $k_l=(k_{l_1},...,k_{l_r})$ with $(k_{l})_0\rightarrow \infty $  such that  $$D(\tilde{\nu}_{k_l,\mathfrak{N}},\mu_\infty)\gg\frac{1}{\big( \log (k_{l})_0\big)^2  \times\prod_{i=1}^r (k_{l_i}-1)^{\frac{1}{3}}}
 $$
 as $l \to \infty$.
\end{theorem}

\begin{remark}
    \begin{itemize}
        \item It is conjectured that there are infinitely many real quadratic fields with narrow class number $1$. Some examples of such fields are given in the OEIS sequence A003655.
        \item  The assumption that $|\sigma_j(\tilde{p})| > 1 $ for all $j = 1, \dots, r$ is satisfied for a rational prime $\tilde{p}$ inert in $F$.
        \item In the above Theorem, we only prove the discrepancy result for the weighted measure $\tilde{\nu}_{k_l,\mathfrak{N}}$. We expect a similar result for the unweighted measure as well (similar to \cite{JS}), which we hope to prove in future work.
    \end{itemize}
\end{remark}

\subsubsection*{Organisation of this article} In \S  \ref{sec: Peetersson-trace-formula}, we set up some basic notations and definitions for Hilbert modular forms. We also recall the Petersson trace formula for these forms and a related estimate from \cite{KL}. In \S \ref{sec: error-term-estimate}, we prove Theorem \ref{Main theorem}, which is an estimate of the error term in the Petersson trace formula. In \S \ref{sec: special-cases}, we analyse the main term from Theorem \ref{Main theorem} in special cases and prove Theorem \ref{Single term}. Finally, in \S \ref{sec: proof-Main-theorem-2}, we prove Theorem \ref{Main theorem 2} which gives an $\Omega$-estimate for the discrepancy between the measures $\tilde{\nu}_{k,\mathfrak{N}}$ and $\mu_\infty$.

\bigskip

\section{Petersson trace formula}\label{sec: Peetersson-trace-formula}
In this section, we recall some basic facts about Hilbert modular forms and the Petersson trace formula for the space $A_k(\mathfrak{N},\omega)$. 

 Let $F $ be a totally real number field and $r=[F :\Q].$  Let $\sigma_1, \dots,  \sigma_r$ be the distinct embeddings of $F \hookrightarrow \mathbb{R}.$ Let $\sigma: F\rightarrow \R^r$ be given by $\sigma(s)=(\sigma_1(s),\,...\,,\sigma_r(s))$.  Let $\infty_1, \dots,\infty_r$ denote the corresponding Archimedean valuations. Let $\mathcal{O}$ be the ring of integers of $F$. Let $\Nr :F\rightarrow \mathbb{Q}$ denote the norm map. For a nonzero ideal $\mathfrak{a}\subset \mathcal{O} $, let $\Nr(\mathfrak{a})=|\mathcal{O}/\mathfrak{a}|.$ For $\alpha \in F^\times,$ we have
$\Nr(\alpha\mathcal{O})=|\Nr(\alpha)|.$  Let $d_F$ denote the discriminant of $F$.

Let $\nu=\nu_{\gp}$ be the discrete valuation corresponding to a prime ideal $\gp$. Let $F_\nu$ be the completion of $F$ with respect to the valuation $\nu$. Let $\mathcal{O}_\nu $ be the ring of integers of the local field $F_\nu.$
Let $\A$ denote the ad\`ele ring of $F$ with finite ad\`eles $\A_{f}$, so that $\A=F_{\infty} \times \A_{f}$ where $F_\infty =F \otimes \mathbb{R} \cong \mathbb{R}^r$. Let $\hat{\mathcal{O}}=\prod_{\nu<\infty} \mathcal{O}_\nu \subset \A_f.$ Let $F^+$ denote the set of totally positive elements of $F$, i.e., all $x\in F$ such that $\sigma_i(x)>0 $ for all $i=1, \dots, r$.  We let $F^+_\infty$ denote the subset of $F_\infty$ consisting of vectors whose entries are all positive. Let $\mathfrak{d} ^{-1}=\{x\in F \, : \,  \mathrm{Tr}_{\mathbb{Q}}^{F}  (x\mathcal{O})\subset \mathbb{Z}  \}$
denote the inverse different ideal. We also let $\mathfrak{d}_+^{-1}=\mathfrak{d}^{-1}\cap F^+$.

Let $\mathfrak{N}$ be an integral ideal of $\mathcal{O}$. Let $k=(k_1,\,...\,,k_r) $ be an $r$-tuple of integers with $k_j\geq 2$. Let $\omega : F^\times \backslash \A^\times \rightarrow \mathbb{C}^\times$ be a unitary Hecke character. We can decompose $\omega$ as $\omega=\prod_\nu \omega_\nu$, where $\omega_\nu: F_\nu^\times \to \mathbb C^\times$ are the local characters. We further assume that
\begin{enumerate}
\item the conductor of $\omega$ divides $\mathfrak{N}$,
\item $\omega_{\infty_j}(x)=\mathrm{sgn} (x)^{k_j} $ for all $j=1,\dots, r$.
\end{enumerate}
The first condition means that $\omega_\nu $ is trivial on $1+\mathfrak{N} \mathcal{O}_\nu$ for all $\nu | \mathfrak{N},$ and unramified for all $\nu \nmid \mathfrak{N} .$

 Let $\theta : \A \rightarrow \mathbb{C}^\times$ be the standard character of $\A$. Concretely, $\theta (x) = \theta_\infty (x_\infty) \cdot \prod_{\nu < \infty} \theta_\nu (x_\nu)$, where 
 \begin{enumerate}
\item $\theta_\infty: F_\infty \to \mathbb C^\times$ is defined by $\theta_\infty (x_\infty)=e^{-2\pi i( x_1+ \cdots +x_r)}$ for $x_\infty = (x_1,...,x_r) $, and
 \item for $\nu<\infty$, $\theta_\nu : F_\nu \rightarrow \mathbb{C}^\times$ is given by  $\theta_\nu (x_\nu)=e^{2\pi i \{\mathrm{Tr}_\nu (x_\nu)\}}$.  
 Here $\{\mathrm{Tr}_\nu (x_\nu)\}$ is obtained by composing the following maps: $\mathrm{Tr}_{\Q_p}^{F_\nu} : F_\nu \to \Q_p$, going modulo $p$-adic integers: $\Q_p \to \mathbb Z_p$, and identifying $\Q_p/\mathbb Z_p$ with $\Q/\mathbb Z$. This map is well-defined since $e^{2\pi i \mathbb Z} = 1$.  Moreover, the kernel of $\theta_\nu$ is the local inverse different $\mathfrak{d}_\nu^{-1}=\{x\in F_\nu :  \mathrm{Tr}_{\mathbb{Q}_p}^{F_\nu}  (x)\in \mathbb{Z}_p\}$. 
 \end{enumerate}
Note that $\theta(x)=1$ for $x\in F$, therefore $\theta$ is a character on $F \backslash \A$.

We now define Kloosterman sums, first locally and then globally. For any finite valuation $\nu$ of $F$, let $\mathtt{n}_\nu\in \mathcal{O}_\nu,$ $\mathtt{n}_\nu\neq 0, $ and $\mathtt{m}_{1\nu},\mathtt{m}_{2\nu}\in\mathfrak{d}_\nu^{-1}$.  For $\mathtt{c}_\nu \in \mathfrak{N}_\nu$, $\mathtt{c}_\nu\neq 0$ , we define the local Kloosterman sum 
\begin{equation}\label{KS-local}
S_{\omega_\nu} (\mathtt{m}_{1\nu},\mathtt{m}_{2\nu};\mathtt{n}_\nu;\mathtt{c}_\nu) = \sum_{\substack{ s_{1},s_{2}\in \mathcal{O}_\nu / \mathtt{c}_\nu \mathcal{O}_\nu  \\{s_1s_2\equiv \mathtt{n}_\nu \ \mathrm{mod}\ c_\nu \mathcal{O}_\nu }} }\theta_\nu \Big(\frac{\mathtt{m}_{1\nu} s_{1}+\mathtt{m}_{2\nu} s_{2}}{\mathtt{c}_\nu}\Big)\omega_\nu(s_2)^{-1}. 
\end{equation}
The value of the sum is $1$ if $\mathtt{c}_\nu \in \mathcal{O}_\nu^\times$.

For a fractional ideal $\mathfrak{a},$ let $\Hat{\mathfrak{a}}=\prod_{\nu<\infty} \mathfrak{a}_\nu \subset \A_f$ where  $\mathfrak{a}_\nu$ denotes its localisation. Let 
$$
\theta_f(x)=\prod_{\nu < \infty} \theta_\nu (x_\nu),
$$ 
and let 
$$
\omega_{\mathfrak{N}}=\prod\omega_{\mathfrak{N},\nu}= \prod_{\nu|\mathfrak{N}} \omega_\nu.
$$
where $\omega_{\mathfrak{N},\nu}=\omega_\nu$ if $\nu| \mathfrak{N}$, and $1$
if $\nu \nmid \mathfrak{N}$. 
For $\mathtt{n}\in \Hat{\mathcal{O}}\cap \A^\times_{f}$, $\mathtt{c} \in \Hat{\mathfrak{N}} \cap \A^\times_{f}$, and $\mathtt{m_1},\mathtt{m_2}\in \Hat{\mathfrak{d}}^{-1},$  we define the global Kloosterman sum
 \begin{equation}\label{KS-global}
 S_{\omega_{\mathfrak{N}}} (\mathtt{m}_{1},\mathtt{m}_{2};\mathtt{n};\mathtt{c})= \sum_{\substack{ s_{1},s_{2}\in \Hat{\mathcal{O}} / \mathtt{c} \Hat{\mathcal{O}}  \\{s_1s_2\equiv \mathtt{n} \ \mathrm{mod}\ \mathtt{c}  \Hat{\mathcal{O}}}} } \theta_{f} \Big(\frac{\mathtt{m}_{1} s_{1}+\mathtt{m}_{2} s_{2}}{\mathtt{c}}\Big)\omega_{\mathfrak{N}}(s_2)^{-1},
 \end{equation}
We have the following relation between the global and local Kloosterman sums, 
$$ 
S_{\omega_{\mathfrak{N}}} (\mathtt{m}_{1},\mathtt{m}_{2};\mathtt{n};\mathtt{c})=\prod_{\nu <\infty} S_{\omega_{\mathfrak{N},\nu}} (\mathtt{m}_{1\nu},\mathtt{m}_{2\nu}; \mathtt{n}_\nu; \mathtt{c}_\nu).
$$
Note that the product on the RHS above is well-defined because $\mathtt{c}_\nu \in \mathcal{O}_\nu^\times$ except for finitely many $\nu$. 

Let $K_{f}=\prod_{\nu<\infty} \GL_2(\mathcal{O}_\nu)$ be the standard maximal compact subgroup of $\GL_2 (\A_{f}) .$
Let 
\begin{equation}\label{K_1-def}
K_1(\mathfrak{N})=  \left\{ \begin{pmatrix} a & b\\  c & d \end{pmatrix} \in K_{f}  : c \in \mathfrak{N} \Hat{\mathcal{O}}, d\in 1+ \mathfrak{N}\Hat{\mathcal{O}} \right\},
\end{equation} and let   $A_k(\mathfrak{N},\omega) $ be the  space  of Hilbert cusp forms with respect to  $K_1(\mathfrak{N})$, of weight $k$ and central character $\omega$. We also define $K_0(\mathfrak{N})=  \left\{ \begin{pmatrix} a & b\\  c & d \end{pmatrix} \in K_{f}  : c \in \mathfrak{N}\Hat{\mathcal{O}}  \right\}$ and let $$\psi(\mathfrak{N})=[K_{f} : K_0(\mathfrak{N})]=\Nr(\mathfrak{N})\prod_{\mathfrak{p}|\mathfrak{N}}\Big(1+\frac{1}{\Nr(\mathfrak{p})}\Big).$$

For $\phi \in A_k(\mathfrak{N},\omega)$ and $m\in \mathfrak{d}^{-1}_+$, we now recall the definition of the $m$th Fourier coefficient of $\phi$, denoted by  $W_m^\phi$, as in \cite[\S 3.4]{KL}. Every character on $F\backslash\A$ is of the form $\theta_m(x)=\theta(-mx)$ for some $m\in F.$
Consider the unipotent subgroup $\tilde{N}=\bigg\{\begin{pmatrix}
1 & * \\
 & 1 
\end{pmatrix}\bigg\}$   of $\GL_2$. As topological groups, $\tilde{N}(\A)\cong \A,$ so characters of the two can be identified. Hence for $m\in F,$ we identify $\theta_m$ with  a character on $\tilde{N}(F)\backslash \tilde{N}(\A)$ in the obvious way. For any $g\in\GL_2(\A)$, the map $n\mapsto \phi(ng)$ is a continuous function on $\tilde{N}(F)\backslash \tilde{N}(\A),$ with a Fourier expansion
$$
\phi\bigg(\begin{pmatrix}
1 & x \\
 & 1 
\end{pmatrix}g \bigg)=\frac{1}{\sqrt{d_F}}\sum_{m\in F} W_m^\phi(g) \theta_m(x).
$$  
The coefficients are Whittaker functions defined by 
\begin{equation}\label{eqn: Fourier-coefficient}
W_m^\phi(g)=\int_{F\backslash\A}     \phi\bigg(\begin{pmatrix}
1 & x \\
 & 1 
\end{pmatrix}g \bigg) \theta(mx) \, dx     
\end{equation}
where $dx$ denotes the Lebesgue measure. 
For $y\in  \A^\times$, we also denote 
$$
W^\phi_m(y)=W^\phi_m\bigg(\begin{pmatrix}
y &  \\
 & 1 
\end{pmatrix}\bigg),
$$ 
ignoring the abuse of notation. We identify $\A_f^\times  $ with $\{1_\infty \}\times \A_f^\times \subset \A^\times,$ so that  $y\in  \A_f^\times$ can be identified as $\{1_\infty\}\times y$ in $\A^\times$.

We refer the reader to  \cite[Section 4.3]{KL} for the definition of the Hecke operators $T_{\mathfrak{n}}$ acting on $A_k (\mathfrak{N}, \omega)$.
 We now recall the following result.

\begin{lemma}[{{\cite[Cor.\ 4.8]{KL}}}]\label{Fourier-Coefficient-formula}
Let $\tilde{d}\in \A^\times_{f}$ such that $\tilde{d} \hat{\mathcal{O}}=\hat{\mathfrak{d}}$ and $m\in \mathfrak{d}^{-1}_+$. If $(m\mathfrak{d},\mathfrak{N} )=1,$ then for any $T_{m\mathfrak{d}}$-eigenfunction $\phi\in A_k(\mathfrak{N},\omega)$ with $W_1^\phi(1/\tilde{d})=1$  and $T_{m\mathfrak{d}}\phi=\lambda^\phi_{m\mathfrak{d}} \phi,$ we have
$$ W_m^\phi(1)=\frac{e^{2\pi r} \prod_{j=1}^r   \sigma_j(m)^{(k_j/2)-1}    }{d_Fe^{2\pi \mathrm{Tr}_{\mathbb{Q}}^F (m)}}\lambda^\phi_{m\mathfrak{d}}.$$ 
 \end{lemma}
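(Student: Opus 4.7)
The plan is to deduce this formula from the interplay between the Hecke action and the Fourier expansion of $\phi$, exploiting the fact that $W_m^\phi$ factorizes into its Archimedean and finite parts because $\phi$ is a cusp form generating an irreducible automorphic representation whose Whittaker model is known explicitly at the infinite places.

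First I would write down, following the general Hecke--Whittaker dictionary in Knightly--Li (Chapter~4), the formula expressing $W_1^{T_{\mathfrak{n}}\phi}(y)$ as a sum over coset representatives of the double coset defining $T_{\mathfrak{n}}$. Under the coprimality assumption $(m\mathfrak{d},\mathfrak{N})=1$, the local computation at each finite place dividing $m\mathfrak{d}$ collapses to relate $W_1^{T_{m\mathfrak{d}}\phi}$ to $W_m^\phi$ evaluated at the identity idele; concretely, one obtains an identity of the shape
$$
W_m^\phi\bigl(\tfrac{1}{\tilde d}\bigr) \;=\; C_{\mathrm{f}}(m)\,\lambda^\phi_{m\mathfrak d}\,W_1^\phi\bigl(\tfrac{1}{\tilde d}\bigr),
$$
where $C_{\mathrm{f}}(m)$ is a purely local adjustment accounting for the ideal $m\mathfrak{d}$ having trivial class (through the different) and the normalization of the Hecke operator. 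Using the hypothesis $W_1^\phi(1/\tilde d)=1$ then eliminates $W_1^\phi$.

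Next I would move from $W_m^\phi(1/\tilde d)$ to $W_m^\phi(1)$ by invoking the automorphy/equivariance of the Whittaker function under the diagonal action. Since $\phi$ has weight $k=(k_1,\dots,k_r)$ and the Archimedean Whittaker function for a lowest weight vector in the discrete series of weight $k_j$ is (up to a constant) $W_{\infty,j}(y_j)=y_j^{k_j/2}e^{-2\pi y_j}$ for $y_j>0$, the ratio $W_m^\phi(1)/W_m^\phi(1/\tilde d)$ can be computed place by place at infinity, producing the factor $\prod_{j=1}^r \sigma_j(m)^{(k_j/2)-1}$ together with the exponential $e^{-2\pi \mathrm{Tr}^F_\Q(m)}$ and a compensating $e^{2\pi r}$ (the latter arising because the Archimedean Whittaker function is normalized at $y=1$). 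The factor $1/d_F$ absorbs the constant $1/\sqrt{d_F}$ appearing in the Fourier expansion together with an additional $1/\sqrt{d_F}$ coming from relating the finite idele $1/\tilde d$ to the identity through the class of $\hat{\mathfrak{d}}$.

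The main obstacle I anticipate is bookkeeping rather than conceptual: one must track very carefully (i) how the local Hecke operator at primes dividing $m\mathfrak d$ rescales Whittaker values versus the global normalization of $T_{m\mathfrak d}$, (ii) the appearance of $d_F$ from the self-dual Haar measure implicit in the Fourier expansion, and (iii) the Archimedean constants which are sensitive to whether one uses the holomorphic or the unitary normalization of the discrete series. Once these three normalization conventions are pinned down consistently with those in \S\ref{sec: Peetersson-trace-formula}, the formula $W_m^\phi(1)=\frac{e^{2\pi r}\prod_j \sigma_j(m)^{(k_j/2)-1}}{d_F\,e^{2\pi\mathrm{Tr}^F_\Q(m)}}\,\lambda^\phi_{m\mathfrak d}$ falls out by combining the two steps above.
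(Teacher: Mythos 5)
This lemma is quoted directly from Knightly--Li \cite[Cor.~4.8]{KL}; the paper gives no proof of its own, so there is no in-paper argument to compare against. Judging your sketch on its own merits: the overall plan --- factor the adelic Whittaker function of $\phi$ into Archimedean and non-Archimedean parts, evaluate the Archimedean part explicitly from the lowest-weight Whittaker vector of the discrete series, and extract $\lambda^\phi_{m\mathfrak d}$ from the finite part via the Hecke--Whittaker relation --- is the correct route and the one Knightly--Li follow. However two of your bookkeeping attributions are wrong in a way that would give a different constant if carried out literally.

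First, you claim the ratio $W_m^\phi(1)/W_m^\phi(1/\tilde d)$ ``can be computed place by place at infinity.'' But $\tilde d\in\A_f^\times$, and under the identification $\A_f^\times\hookrightarrow\A^\times$, $y\mapsto(1_\infty,y)$, the ideles $1$ and $1/\tilde d$ have identical Archimedean components; that ratio is a purely non-Archimedean quantity. The true Archimedean input is the ratio $W_\infty(\sigma(m))/W_\infty(1_\infty)=\prod_j\sigma_j(m)^{k_j/2}\,e^{-2\pi\sigma_j(m)+2\pi}$, which supplies $\prod_j\sigma_j(m)^{k_j/2}$, the exponential $e^{-2\pi\operatorname{Tr}_\Q^F(m)}$, and the compensating $e^{2\pi r}$ --- not the exponent $(k_j/2)-1$ you assert.

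Second, the shift from $k_j/2$ down to $(k_j/2)-1$ and the full factor $1/d_F$ both originate in a single finite-place normalization: the Hecke--Whittaker identity introduces $\lambda^\phi_{m\mathfrak d}/\Nr(m\mathfrak d)$, and $\Nr(m\mathfrak d)=\Nr(\mathfrak d)\cdot\Nr(m\mathcal O)=d_F\prod_j\sigma_j(m)$ for $m\in\mathfrak d^{-1}_+$. Redistributing $\prod_j\sigma_j(m)^{-1}$ into the Archimedean factor gives the $(k_j/2)-1$ exponent, and $\Nr(\mathfrak d)^{-1}$ gives the $1/d_F$; no $\sqrt{d_F}$ from the Fourier-expansion normalization is needed. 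Repairing these two attributions --- i.e.\ recognizing that your local correction $C_{\mathrm f}(m)$ is precisely $\Nr(m\mathfrak d)^{-1}$ and that the Archimedean ratio produces exponent $k_j/2$, not $(k_j/2)-1$ --- turns your sketch into a correct reconstruction of the cited result.
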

 
We now recall the statement of Petersson's trace formula in the Hilbert modular setting. 
\begin{theorem}[{{\cite[Thm.\ 5.11]{KL}}}]\label{Petersson-Trace-Formula}
Let $\mathfrak{n}$ and $\mathfrak{N}$ be integral ideals with $(\mathfrak{n},\mathfrak{N})=1.$ Let $k=(k_1,...,k_r)$ with all $k_j>2.$ Let $\mathcal{F}$ be an orthogonal basis for $A_{k}(\mathfrak{N},\omega)$ consisting of eigenfunctions for the Hecke operator $T_\mathfrak{n}.$ Then for any $m_1,m_2\in \mathfrak{d}_+^{-1}$, we have 
\begin{multline}  \label{PTF}
\frac{e^{2\pi \mathrm{Tr}_{\mathbb{Q}}^F (m_1+m_2)}}{{\psi(\mathfrak{N})}} \Bigg[\prod_{j=1}^r  \frac{(k_j-2)!}{(4\pi \sqrt{\sigma_j(m_1m_2)})^{k_j-1}}\Bigg] \sum_{{\phi}\in \mathcal{F} } \frac{\lambda^\phi_\mathfrak{n} W_{m_1}^\phi(1) \overline{W_{m_2}^\phi(1)}}{\| \phi \|^2} 
=  \Hat{T}(m_1,m_2,\mathfrak{n}) \frac{\sqrt{d_F\Nr(\mathfrak{n})}}{\omega_\mathfrak{N}(m_1/s)\omega_{f}(s)} \\
+ \sum_{i=1}^t\sum_{\substack{u\in U \\ \eta_i u\in F^+}} \sum_{\substack{s\in \mathfrak{b_i}\mathfrak{N}/\pm \\ s\neq 0 }} \Bigg\{ \omega_{f}(sb_i^{-1} ) S_{\omega_\mathfrak{N}} (m_1,m_2;\eta_i u \mathrm{b}_i^{-2};s\mathrm{b}_i^{-1})  \\
 \times  \frac{\sqrt{\Nr(\eta_i u)}}{\Nr(s)} \times\prod_{j=1}^r\frac{2\pi}{(\sqrt{-1})^{k_j}}J_{k_j-1} \left(  \frac{4 \pi\sqrt{\sigma_j (\eta_i u m_1m_2 )}}{|\sigma_j(s)|} \right) \Bigg\}.
\end{multline}

\begin{itemize}
     \item where $\Hat{T}(m_1,m_2,\mathfrak{n})\in \{0,1\}$ is non zero if and only if there exists $s\in \Hat{\mathfrak{d}}^{-1}$ such that $m_1m_2
\in  s\Hat{\mathcal{O}}   $ and $m_1m_2 \Hat{\mathcal{O}}=s^2\Hat{\mathfrak{n}}$,
        \item $U$ is a set of representative for $\mathcal{O}^\times/{\mathcal{O}^\times}^2$,
        \item $\mathrm{b_i}\Hat{\mathcal{O}}=\Hat{\mathfrak{b}}_i$ for integral ideal $\mathfrak{b}_i$  and $i=1,\,\dots\,,t  $, where $\mathfrak{b_i}$s are a set of representatives for the equation $[\mathfrak{b}]^2[\mathfrak{n}]=1$ in the ideal class group,  
        \item $\eta_i\in F$ generates the principal ideal $\mathfrak{b}_i^2\mathfrak{n}$,
        \item $\omega_\mathfrak{N}=\prod_{v|\mathfrak{N}}\omega_v \prod_{v \nmid \mathfrak{N}} 1$,
    \end{itemize}
    
   \end{theorem}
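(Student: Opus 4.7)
The plan is to compute the Fourier coefficients of a carefully chosen adelic Poincaré series in two different ways and equate them. First, for fixed $m_2 \in \mathfrak{d}_+^{-1}$, I would build
$$P_{m_2}(g) = \sum_{\gamma \in Z(F)N(F)\backslash \GL_2(F)} f(\gamma g),$$
where $f = \bigotimes_\nu f_\nu$ is a pure tensor: spherical at $\nu \nmid \mathfrak{N}$; supported on $K_1(\mathfrak{N})$ and transforming by $\omega_\nu^{-1}$ at $\nu \mid \mathfrak{N}$; and at each archimedean place $\infty_j$, the matrix coefficient of the lowest weight vector in the weight-$k_j$ holomorphic discrete series, normalized so that its Whittaker transform against $\theta_{m_2}$ equals a prescribed exponential. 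With these choices $P_{m_2} \in A_k(\mathfrak{N},\omega)$, and the Petersson inner product $\langle \phi, P_{m_2}\rangle$ reproduces $\overline{W_{m_2}^\phi(1)}$ up to an explicit normalization involving $\psi(\mathfrak{N})^{-1}\prod_j (k_j-2)!/(4\pi\sqrt{\sigma_j(m_2)})^{k_j-1}$ and $e^{2\pi\mathrm{Tr}(m_2)}$.

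For the spectral side, I would apply $T_\mathfrak{n}$ to $P_{m_2}$, expand $T_\mathfrak{n} P_{m_2}$ against the orthogonal Hecke eigenbasis $\mathcal{F}$, and take the $m_1$-th Fourier coefficient at $g=1$. Using $T_\mathfrak{n}\phi = \lambda^\phi_\mathfrak{n}\phi$ and the reproducing property from the previous step, this assembles to exactly the left-hand side of \eqref{PTF}.

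For the geometric side, I would substitute the Bruhat decomposition
$$\GL_2(F) = Z(F) B(F) \;\sqcup\; \bigsqcup_{c} Z(F) N(F) w \begin{pmatrix} c & 0 \\ 0 & c^{-1} \end{pmatrix} N(F)$$
into $P_{m_2}$ and compute the Whittaker integral against $\theta(m_1 x)$. The Borel cell contributes the diagonal term $\hat{T}(m_1,m_2,\mathfrak{n})\sqrt{d_F\Nr(\mathfrak{n})}/(\omega_\mathfrak{N}(m_1/s)\omega_f(s))$, nonzero precisely when $m_1 m_2 \hat{\mathcal{O}} = s^2\hat{\mathfrak{n}}$ for some $s \in \hat{\mathfrak{d}}^{-1}$. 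The big cell, after unfolding and a change of variable on the unipotent integral, factors into local integrals: the finite places assemble to the Kloosterman sum $S_{\omega_\mathfrak{N}}(m_1,m_2;\eta_i u\mathrm{b}_i^{-2};s\mathrm{b}_i^{-1})$, while the $j$-th archimedean integral is the classical Hankel integral producing $\frac{2\pi}{i^{k_j}}J_{k_j-1}(4\pi\sqrt{\sigma_j(\eta_i u m_1m_2)}/|\sigma_j(s)|)$. The outer sums over $i\leq t$, $u\in U$, and nonzero $s \in \mathfrak{b}_i\mathfrak{N}/\pm$ arise from parameterizing the cell representatives adelically modulo principal equivalence: the constraint $[\mathfrak{b}_i]^2[\mathfrak{n}]=1$ is forced by requiring $\eta_i$ to generate a principal ideal of the form $\mathfrak{b}_i^2\mathfrak{n}$, while the totally positive condition $\eta_i u \in F^+$ is forced by the support of the archimedean discrete-series matrix coefficient.

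The main obstacle is the bookkeeping in matching measure normalizations and character twists across the adelic Bruhat decomposition: tracking how $\sqrt{d_F}$ emerges from the self-dual Haar measure on $\A$, how $\omega$ distributes between the local Kloosterman sums and the prefactor $\omega_\mathfrak{N}(m_1/s)\omega_f(s)$, and how the ideal-class and unit indexing $\{(\mathfrak{b}_i,u)\}$ emerges from reducing the cell parameter $c$ adelically. Once the test function is pinned down, the archimedean Bessel-function identity is a standard Weber--Schafheitlin type evaluation, and equating the spectral and geometric computations yields exactly \eqref{PTF}.
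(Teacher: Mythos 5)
The paper does not prove this statement at all: Theorem~\ref{Petersson-Trace-Formula} is imported verbatim from Knightly and Li (Theorem~5.11 of \cite{KL}), and the present article immediately moves on to corollaries without supplying or even sketching an argument. So there is no ``paper's own proof'' to compare against, and the intended comparison is necessarily with the proof in the cited reference.

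Your sketch is a correct high-level account of how that reference proceeds, and you have identified the right structural ingredients: the adelic Poincaré series with a factorizable test function (holomorphic discrete series matrix coefficients at the $\infty_j$, suitably supported functions at the finite places forcing level $K_1(\mathfrak{N})$ and nebentypus $\omega$), the spectral expansion of $T_{\mathfrak{n}} P_{m_2}$ against the Hecke eigenbasis $\mathcal{F}$ via the reproducing property of the Petersson inner product, and the geometric evaluation through the Bruhat decomposition, with the Borel cell giving the $\hat{T}(m_1,m_2,\mathfrak{n})$ diagonal and the big cell factoring into local Kloosterman sums at finite $\nu$ and Hankel-type Bessel integrals at the archimedean places. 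You have also correctly located where the indexing by $(\mathfrak{b}_i, u, s)$ comes from --- namely from descending the adelic big-cell parameter to $F$-rational representatives, which in the Hilbert setting forces the class-group constraint $[\mathfrak{b}_i]^2[\mathfrak{n}]=1$, the unit ambiguity $u \in U$, and the totally positive condition $\eta_i u \in F^+$ coming from the support of the archimedean matrix coefficient. The one caveat is that this remains a plan rather than a proof: the real content in Knightly--Li lies precisely in the ``bookkeeping'' you flag (the $\sqrt{d_F}$ from self-dual Haar measure, the distribution of $\omega$ between $\omega_{\mathfrak{N}}(m_1/s)\omega_f(s)$ and the Kloosterman sums, the exact archimedean normalization producing the $(k_j-2)!/(4\pi\sqrt{\sigma_j(m_1m_2)})^{k_j-1}$ and $e^{2\pi \mathrm{Tr}(m_1+m_2)}$ prefactors), and none of that is carried out here. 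As an outline it is accurate and aligned with the source; as a proof it would need those computations filled in.
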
 
    
    \begin{corollary}\label{sec 2 corollary}
Let $\tilde{d}\in \A^\times_{f}$ such that $\tilde{d} \hat{\mathcal{O}}=\hat{\mathfrak{d}} $ and   $m_1,m_2\in \mathfrak{d}_+^{-1}$ such that $(m_1\mathfrak{d},\mathfrak{N} )=(m_2\mathfrak{d},\mathfrak{N})=1.$ Let     $W_1^\phi(1/\tilde{d})=1$ for all $\phi\in \mathcal{F},$ where 
 $\mathcal{F}$ is an orthogonal basis for $A_{k}(\mathfrak{N},\omega)$.  
Then 

\begin{multline*} 
\frac{e^{4\pi r}}{\psi(\mathfrak{N})d_F^2\sqrt{\Nr(m_1m_2)}}\Bigg[\prod_{j=1}^r  \frac{(k_j-2)!}{(4\pi)^{k_j-1} }\Bigg]
\sum_{{\phi}\in \mathcal{F} }\frac{\lambda^\phi_{m_1\mathfrak{d}} \overline{\lambda^\phi_{m_2\mathfrak{d}} }}{\| \phi \|^2}
=  \Hat{T}(m_1,m_2,\mathcal{O})\frac{\sqrt{d_F}}{\omega_\mathfrak{N}(m_1/s)\omega_{f}(s)} \\
+ \sum_{i=1}^t\sum_{\substack{u\in U \\ \eta_i u\in F^+}} \sum_{\substack{s\in \mathfrak{b_i}\mathfrak{N}/\pm \\ s\neq 0} } \Bigg\{ \omega_{f}(s\b_i^{-1} ) S_{\omega_\mathfrak{N}} (m_1,m_2;\eta_i u \b_i^{-2};s\b_i^{-1})
 \frac{\sqrt{\Nr(\eta_i u)}}{\Nr(s)}  \\
 \times \prod_{j=1}^r\frac{2\pi}{(\sqrt{-1})^{k_j}}J_{k_j-1} \left(  \frac{4 \pi\sqrt{\sigma_j (\eta_i u m_1m_2 )}}{|\sigma_j(s)|} \right) \Bigg\}.
\end{multline*}
    \end{corollary}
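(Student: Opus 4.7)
The plan is to specialize Theorem~\ref{Petersson-Trace-Formula} to $\mathfrak{n}=\mathcal{O}$ and then convert the Fourier coefficients $W_{m_i}^\phi(1)$ into the Hecke eigenvalues $\lambda_{m_i\mathfrak{d}}^\phi$ by means of Lemma~\ref{Fourier-Coefficient-formula}. Setting $\mathfrak{n}=\mathcal{O}$ (which is trivially coprime to $\mathfrak{N}$) makes $T_{\mathcal{O}}$ the identity operator, so $\lambda_{\mathcal{O}}^\phi=1$ for every $\phi\in\mathcal{F}$, and $\sqrt{d_F\Nr(\mathfrak{n})}$ collapses to $\sqrt{d_F}$ in the main term. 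The triple sum on the right-hand side is unaffected except that the generators $\eta_i$ now generate $\mathfrak{b}_i^{2}$, which is exactly what is written in the statement.

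For the sum over $\mathcal{F}$ on the left, observe that a standard orthogonal basis of $A_k(\mathfrak{N},\omega)$ can be chosen to consist of common eigenfunctions of the commuting family $\{T_\mathfrak{n}:(\mathfrak{n},\mathfrak{N})=1\}$, so in particular of $T_{m_1\mathfrak{d}}$ and $T_{m_2\mathfrak{d}}$. Since $W_1^\phi(1/\tilde d)=1$ by assumption and $(m_i\mathfrak{d},\mathfrak{N})=1$ for $i=1,2$, Lemma~\ref{Fourier-Coefficient-formula} applies and yields
$$
W_{m_1}^\phi(1)\,\overline{W_{m_2}^\phi(1)}
=\frac{e^{4\pi r}\,\prod_{j=1}^{r}\sigma_j(m_1 m_2)^{(k_j/2)-1}}{d_F^{2}\,e^{2\pi\,\mathrm{Tr}_{\mathbb Q}^{F}(m_1+m_2)}}\,\lambda_{m_1\mathfrak{d}}^\phi\,\overline{\lambda_{m_2\mathfrak{d}}^\phi}.
$$
Substituting this into the left-hand side of \eqref{PTF} cancels the factor $e^{2\pi\,\mathrm{Tr}_{\mathbb Q}^{F}(m_1+m_2)}$ coming from Theorem~\ref{Petersson-Trace-Formula}, and combines with $\prod_{j=1}^{r}(4\pi\sqrt{\sigma_j(m_1 m_2)})^{-(k_j-1)}$ to leave $\prod_{j=1}^{r}(4\pi)^{-(k_j-1)}\sigma_j(m_1 m_2)^{-1/2}$. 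Since each $m_i$ is totally positive, $\prod_{j=1}^r \sigma_j(m_1 m_2)=\Nr(m_1 m_2)$, and the product of the square roots becomes $\sqrt{\Nr(m_1 m_2)}$, producing exactly the prefactor stated in the corollary.

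No new analytic input is required: the corollary is a rewriting of Theorem~\ref{Petersson-Trace-Formula} with Fourier coefficients replaced by Hecke eigenvalues. The only point that demands any care is tracking the several constants introduced by Lemma~\ref{Fourier-Coefficient-formula} — the exponentials $e^{2\pi r}$, the denominator $d_F$, and the powers of $\sigma_j(m_i)$ — and checking that they combine consistently with the normalizing factors in \eqref{PTF}. Once this bookkeeping is carried out as above, the right-hand side is exactly what Theorem~\ref{Petersson-Trace-Formula} produces for $\mathfrak{n}=\mathcal{O}$, completing the proof.
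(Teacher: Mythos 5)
Your proposal is correct and follows exactly the same route as the paper: take $\mathfrak{n}=\mathcal{O}$ in Theorem~\ref{Petersson-Trace-Formula} so that $\lambda^\phi_{\mathcal{O}}=1$, then substitute the expressions from Lemma~\ref{Fourier-Coefficient-formula} for $W_{m_1}^\phi(1)$ and $\overline{W_{m_2}^\phi(1)}$, and collect constants using $\prod_j\sigma_j(m_1m_2)=\Nr(m_1m_2)$. The bookkeeping you carry out is identical to the paper's computation.
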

    
    \begin{proof}
    This corollary is proved by substituting the expressions for $W_{m_1}^\phi(1)$ and $W_{m_2}^\phi(1)$ obtained from  Lemma \ref{Fourier-Coefficient-formula} and taking $\mathfrak{n}=\mathcal{O}$ in Theorem \ref{Petersson-Trace-Formula}. Using  Lemma \ref{Fourier-Coefficient-formula}  we have
    $$ 
    W_{m_1}^\phi(1)=\frac{e^{2\pi r} \prod_{j=1}^r   \sigma_j(m_1)^{(k_j/2)-1}    }{d_Fe^{2\pi \mathrm{Tr}_{\mathbb{Q}}^F (m_2)}}\lambda^\phi_{m_1\mathfrak{d}}
    $$  
    and 
    $$ 
    W_{m_2}^\phi(1)=\frac{e^{2\pi r} \prod_{j=1}^r   \sigma_j(m_2)^{(k_j/2)-1}    }{d_Fe^{2\pi \mathrm{Tr}_{\mathbb{Q}}^F (m_2)}}\lambda^\phi_{m_2\mathfrak{d}}.
    $$ 
    Multiplying the first equation with the conjugate of the second one, we get 
    \begin{align*}
    W_{m_1}^\phi(1) \overline{ W_{m_2}^\phi(1)} &= \frac{e^{4\pi r}  \prod_{j=1}^r   \sigma_j(m_1m_2)^{(k_j/2)-1}}{d_F^2e^{2\pi \mathrm{Tr}_{\mathbb{Q}}^F (m_1+m_2)}} \lambda^\phi_{m_1\mathfrak{d}} \overline{\lambda^\phi_{m_2\mathfrak{d}}} \\
 &=\frac{e^{4\pi r}  \prod_{j=1}^r   \sigma_j(m_1m_2)^{((k_j-1)/2)}}{d_F^2  \sqrt{\prod_{j=1}^r   \sigma_j(m_1m_2)} e^{2\pi \mathrm{Tr}_{\mathbb{Q}}^F (m_1+m_2)}}\lambda^\phi_{m_1\mathfrak{d}} \overline{\lambda^\phi_{m_2\mathfrak{d}}}.
    \end{align*}
    Thus 
    $$
    \frac{e^{2\pi \mathrm{Tr}_{\mathbb{Q}}^F (m_1+m_2)}    }{\prod_{j=1}^r   \sigma_j(m_1m_2)^{((k_j-1)/2)}}  W_{m_1}^\phi(1)\overline{ W_{m_2}^\phi(1)} =\frac{  e^{4\pi r}  }{d_F^2 \sqrt{\Nr(m_1m_2)}} \lambda^\phi_{m_1\mathfrak{d}} \overline{\lambda^\phi_{m_2\mathfrak{d}} }.
    $$
 In Theorem \ref{Petersson-Trace-Formula}, taking $\mathfrak{n}=\mathcal{O}$ implies $\lambda^\phi_{\mathcal{O}}=1 $ 
    and the proof is complete. 
\end{proof}
    
\begin{corollary}
Let $\mathfrak{p}$ be a prime ideal not dividing the level $\mathfrak{N}.$   Let $k=(k_1,...,k_r)$ with all $k_j>2$. We also take an integer $\ell \geq 0.$ Let $\mathcal{F}$ be an orthogonal basis for $A_{k}(\mathfrak{N},\omega)$ consisting of eigenfunctions for the Hecke operator $T_\mathfrak{\mathfrak{p}^\ell}.$ Then for any $m\in \mathfrak{d}_+^{-1}$, we have
\begin{multline*} 
\frac{e^{4\pi \mathrm{Tr}_{\mathbb{Q}}^F (m)}}{\psi(\mathfrak{N})}\Bigg[\prod_{j=1}^r  \frac{(k_j-2)!}{(4\pi \sigma_j(m))^{k_j-1}}\Bigg]
\sum_{{\phi}\in \mathcal{F} }\frac{\lambda^\phi_\mathfrak{\mathfrak{p}^\ell}  |W_{m}^\phi(1)|^2 }{\|\phi \|^2}
=  \Hat{T}(m,m,\mathfrak{\mathfrak{p}^\ell}) \frac{\sqrt{d_F\Nr(\mathfrak{p}^\ell)}}{\omega_\mathfrak{N}(m/s)\omega_{f}(s)} \\
+\sum_{i=1}^t\sum_{\substack{u\in U \\ \eta_i u\in F^+}} \sum_{\substack{s\in \mathfrak{b_i}\mathfrak{N}/\pm \\ s\neq 0 }}\Bigg\{\omega_{f}(s\b_i^{-1} ) S_{\omega_\mathfrak{N}} (m,m;\eta_i u \b_i^{-2};s\b_i^{-1})
\frac{\sqrt{\Nr(\eta_i u)}} {\Nr(s)} \\
 \times \prod_{j=1}^r\frac{2\pi}{(\sqrt{-1})^{k_j}}J_{k_j-1} \left(  \frac{4 \pi    |\sigma_j(m)| \sqrt{\sigma_j (\eta_i u  )}}{|\sigma_j(s)|} \right) \Bigg\}.
\end{multline*}
\end{corollary}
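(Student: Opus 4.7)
The plan is to obtain this corollary as a direct specialization of the Petersson trace formula (Theorem \ref{Petersson-Trace-Formula}) to the diagonal case $m_1 = m_2 = m$, with the choice of ideal $\mathfrak{n} = \mathfrak{p}^\ell$. Since $(\mathfrak{p},\mathfrak{N})=1$ by hypothesis, we have $(\mathfrak{p}^\ell,\mathfrak{N})=1$, so the trace formula applies to the space $A_k(\mathfrak{N},\omega)$ with $T_{\mathfrak{p}^\ell}$.

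First I would record the following elementary simplifications, which are just rewrites, not new content. Because $m \in \mathfrak{d}^{-1}_+$ is totally positive, $\sigma_j(m) > 0$ for every $j$, so $\sqrt{\sigma_j(m_1 m_2)} = \sqrt{\sigma_j(m)^2} = \sigma_j(m)$; hence the normalising product on the left-hand side becomes
\[
\prod_{j=1}^r \frac{(k_j-2)!}{(4\pi \sqrt{\sigma_j(m_1 m_2)})^{k_j-1}} = \prod_{j=1}^r \frac{(k_j-2)!}{(4\pi \sigma_j(m))^{k_j-1}}.
\]
Likewise $\mathrm{Tr}_{\mathbb Q}^F(m_1+m_2) = 2\,\mathrm{Tr}_{\mathbb Q}^F(m)$, which accounts for the $e^{4\pi\mathrm{Tr}_{\mathbb Q}^F(m)}$ on the left-hand side, and $W_{m_1}^\phi(1)\,\overline{W_{m_2}^\phi(1)} = |W_m^\phi(1)|^2$. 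For the Kloosterman/Bessel side, the same totally positive factorisation gives
\[
\sqrt{\sigma_j(\eta_i u\, m_1 m_2)} = |\sigma_j(m)|\,\sqrt{\sigma_j(\eta_i u)},
\]
which is exactly the argument of the Bessel function appearing in the statement of the corollary.

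With these substitutions in hand, the right-hand side of Theorem \ref{Petersson-Trace-Formula} translates verbatim into the right-hand side of the corollary, provided one also notes that $\Hat{T}(m_1,m_2,\mathfrak{n})$ specialises to $\Hat{T}(m,m,\mathfrak{p}^\ell)$ and that $\Nr(\mathfrak{n}) = \Nr(\mathfrak{p}^\ell)$ in the delta-like main term $\sqrt{d_F\Nr(\mathfrak{n})}/(\omega_\mathfrak{N}(m_1/s)\omega_f(s))$. The indexing triple sum over $i$, $u$, and $s \in \mathfrak{b}_i\mathfrak{N}/\pm$ is unchanged since it depends only on the class $[\mathfrak{b}]$ through the equation $[\mathfrak{b}]^2[\mathfrak{n}]=1$, which here reads $[\mathfrak{b}]^2[\mathfrak{p}^\ell]=1$.

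There is no genuine obstacle: the argument is a clean specialisation of the Petersson trace formula, and no analytic input (such as Bessel estimates, or the choice of truncation set $A_i$) is needed at this stage. The only point to be slightly careful about is the absolute value $|\sigma_j(m)|$ versus $\sigma_j(m)$ in the Bessel argument of the final display; both are equal for $m \in \mathfrak{d}^{-1}_+$, so either form may be used, matching the statement as written.
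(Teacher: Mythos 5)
Your proposal is correct and matches the paper's own argument exactly: the corollary is obtained by setting $\mathfrak{n}=\mathfrak{p}^\ell$ and $m_1=m_2=m$ in Theorem \ref{Petersson-Trace-Formula}, and the additional remarks you make (that $\sqrt{\sigma_j(m_1m_2)}=\sigma_j(m)=|\sigma_j(m)|$ for totally positive $m$, that $\mathrm{Tr}(m_1+m_2)=2\mathrm{Tr}(m)$, and that $W_{m_1}^\phi(1)\overline{W_{m_2}^\phi(1)}=|W_m^\phi(1)|^2$) are precisely the bookkeeping needed to pass from the general formula to the stated display.
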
    
    
    \begin{proof}
    We obtain this corollary by taking $\mathfrak{n}=\mathfrak{p}^\ell$ and $m_1=m_2=m $   in Theorem \ref{Petersson-Trace-Formula}. 
    \end{proof}
\bigskip

\section{Estimating the error term in the trace formula}\label{sec: error-term-estimate}

We prove Theorem \ref{Main theorem} in this section.  First, we estimate the triple sum appearing on the right-hand side of the trace formula given by Theorem \ref{Petersson-Trace-Formula}. One of the main objects appearing in the sum is the $J$-Bessel function. Hence,
in the next lemma, we recall bounds for the $J$-Bessel function of the first kind that will be useful. 
\begin{lemma}\label{Bessel-estimate}
We have the following estimates of the $J$-Bessel function. 
\begin{enumerate}[label=(\roman*)]
    \item \label{Bessel-estimate-part-1} If $a \geq 0$ and $0<x\leq 1$, we have $$1\leq \frac{J_a(ax)}{x^aJ_a(a)}\leq e^{a(1-x)}.$$
    \item \label{Bessel-estimate-part-2}   As $a\rightarrow \infty $, we have $0<J_a(a) \ll \frac{1}{a^{\frac{1}{3}}}$. 
    \item \label{Bessel-estimate-part-3}  If $|d|<1$, then $$\frac{1}{a^{\frac{1}{3}}} \ll  J_a(a+da^{\frac{1}{3}})\ll \frac{1}{a^{\frac{1}{3}}}. $$
    \item \label{Bessel-estimate-part-4}  For $x\in \mathbb{R}$ and $a > 0$, $|J_a(x)|\leq \min(ba^{-\frac{1}{3}},c|x|^{-\frac{1}{3}})$ where $b \approx 0.674885$ and $c \approx 0.7857468704$. 
    \item \label{Bessel-estimate-part-5} For $\frac{1}{2} \leq x<1 $, we have the following uniform bound $$J_a(ax)\ll \frac{1}{(1-x^2)^{1/4}a^{1/2}}. $$
\end{enumerate}
\end{lemma}

\begin{proof}
We refer to \cite[\S 2.1.1]{JS}  for all parts of the lemma, except (iv), for which we refer to \cite[\S 1]{LJ}. Part (iii) is an essential ingredient for obtaining the lower bound in Theorem 2. The geometric origin of the transition behavior of the $J$-Bessel function given by (iii) is explained by Marshall in \cite[\S 5]{JS}.
\end{proof}

We now give an outline of the proof of Theorem \ref{Main theorem}.

\subsection*{Outline of the proof of Theorem \ref{Main theorem}.}
In  \S\ref{sec 2}, we first fix $i$ in equation \eqref{PTF}, where $i$ varies over a finite set. Then we take $u\in U$ satisfying $\eta_i u\in F^+$ to be fixed till \S\ref{last step}.  This helps us to focus on a single lattice at a time and estimate the right-hand side of equation \eqref{PTF}. In \S \ref{sec 3}, we fix an orthant of the lattice so that we can apply J-Bessel function estimates carefully. This is necessary as we do not use a uniform bound for a fixed lattice.  According to the usability of Lemma \ref{Bessel-estimate}, we apply bounds for a fixed orthant in two steps.  
In \S \ref{last step}, firstly we vary over all orthants (earlier fixed in \S \ref{sec 3})  for a fixed lattice, then we vary over all possible lattices along with  $u\in U$ satisfying $\eta_i u\in F^+$ to complete the proof. 
\begin{proof}[Proof of Theorem \ref{Main theorem}]
Let $m_1$ and $m_2$ satisfy equation \eqref{m1jm2j}. 
We fix some notations before we proceed further in the proof.  
Let $\delta_i$ and $A_i$ be as in equations \eqref{delta_i} and \eqref{A_i}. 
 Recall that  $$\gamma_{j}=\max \left\{\sqrt{\sigma_{j}(\eta_i u)} \, : \,i =1,...\,,t  ,u\in U, \eta_i u\in F^+\right\} .$$ 
Further, let  $$\beta_{j}=\min \left\{\sqrt{\sigma_{j}(\eta_i u)} \, : \,i =1,...\,,t  ,u\in U, \eta_i u\in F^+\right\} \qquad \textrm{and} \qquad \epsilon_j=\frac{\gamma_j}{\beta_j} $$ 
for all $j=1,\dots, r$.

\subsection{Fixing a lattice.}\label{sec 2}
Let us fix $i$ between $1$ and $t$ in the triple sum given in equation \eqref{PTF}.  
Note that $\eta_i$ is now fixed, as $i$ is fixed. We now fix $u\in U$ satisfying $\eta_i u\in F^+$ until   \S \ref{last step}. 
We break the last of the triple sum into two parts, the first part is a summation over $s\in A_i$ and 
 and the second over $s\not\in A_i$.
By taking  $A_i^\prime=\{s\in  \mathfrak{b_i}\mathfrak{N}/\pm\, : \, s\neq 0, s\notin A_i\}$, we break down the summation 
   $$ 
 \sum_{\stackrel{s\in  \mathfrak{b_i}\mathfrak{N}/\pm,}{s\neq 0} }\omega_{\mathrm{f}}(s\b_i^{-1} ) S_{\omega_\mathfrak{N}} (m_{1},m_{2};\eta_i u \b_i^{-2};s\b_i^{-1})
 \frac{\sqrt{\Nr(\eta_i u)}}{\Nr(s)}
 \prod_{j=1}^r\frac{2\pi}{(\sqrt{-1})^{k_j}}J_{k_j-1} \left(  \frac{4 \pi\sqrt{\sigma_j (\eta_i u m_{1}m_{2} )}}{|\sigma_j(s)|}\right) 
    $$  
into summations over 
    $$  \sum_{s\in A_i } \quad \textrm{and} \quad  \sum_{s\in A_i^\prime }.$$
As $A_i$ is finite, the sum $\sum_{s\in A_i }$
is finite and is going to constitute the main term. 
So we focus on estimating the sum when the sum is taken over $s\in A_i'.$
 We show that as $k_0\rightarrow \infty,$ the sum   
\begin{equation}\label{Aiprime}
\sum_{s\in A_i' }\omega_{\mathrm{f}}(s\b_i^{-1} ) S_{\omega_\mathfrak{N}} (\mc;\eta_i u \b_i^{-2};s\b_i^{-1})
 \frac{\sqrt{\Nr(\eta_i u)}}{\Nr(s)}
 \prod_{j=1}^r\frac{2\pi}{(\sqrt{-1})^{k_j}}J_{k_j-1} \Big(  \frac{4 \pi\sqrt{\sigma_j (\eta_i u \mp )}}{|\sigma_j(s)|}\Big)
\end{equation}
$$ =\O\left(\sum_{j=1}^r \left( (k_{j}-1)^{-\frac{1}{2}}\prod_{j'=1,j'\neq j}(k_{j'}-1)^{-\frac{1}{3}}\right)\right).$$
   The above asymptotic is achieved in  equation \eqref{sinAip}.

 Using Lemma 6.1 from \cite{KL}, we get $$ \Big|S_{\omega_\mathfrak{N}} (\mc;\eta_i u \b_i^{-2};s\b_i^{-1})\Big|\leq \Nr(\eta_i u\b_i^{-2})\Nr(s\b_i^{-1}),$$  which implies   
 $$ \Bigg|\omega_{\mathrm{f}}(s\b_i^{-1} ) S_{\omega_\mathfrak{N}} (\mc;\eta_i u \b_i^{-2};s\b_i^{-1})
 \frac{\sqrt{\Nr(\eta_i u)}}{\Nr(s)}\Bigg|\leq \Nr(\eta_i u)^{\frac{3}{2}}\Nr(\b_i^{-3}),$$ and consequently
 $$
\Bigg| \sum_{s\in A_i' }\omega_{\mathrm{f}}(s\b_i^{-1} ) S_{\omega_\mathfrak{N}} (\mc;\eta_i u \b_i^{-2};s\b_i^{-1})
 \frac{\sqrt{\Nr(\eta_i u)}}{\Nr(s)}
 \prod_{j=1}^r\frac{2\pi}{(\sqrt{-1})^{k_j}}J_{k_j-1} \Big(  \frac{4 \pi\sqrt{\sigma_j (\eta_i u \mp )}}{|\sigma_j(s)|}\Big)     \Bigg|$$
\begin{align}\label{Ai'raw}
\leq 
 \sum_{s\in A_i' }  \Nr(\eta_i u)^{\frac{3}{2}}\Nr(\b_i^{-3}) \prod_{j=1}^r2\pi \Bigg| J_{k_j-1} \Big(  \frac{4 \pi \sqrt{\sigma_j (\eta_i u \mp )}}{|\sigma_j(s)|}\Big)\Bigg|
 .\end{align}
Therefore, our goal now is to find bounds for 
\begin{align*}
\sum_{s\in A^\prime_{i} }    \prod_{j=1}^r\left| J_{k_j-1} \Big(  \frac{4 \pi\sqrt{\sigma_j (\eta_i u m_1m_2 )}}{|\sigma_j(s)|}\Big)\right|.
\end{align*}

Since $s\in A_i'$, there exists $j_0$ ($j_0$ depends upon $s$) such that $|\sigma_{j_0}(s)|> 2 \tilde{\delta}_i.$ Due to the hypothesis  $$\frac{2\pi \gamma_{j_0}\sqrt{\sigma_{j_0}(m_1m_2)}}{\tilde{\delta}_i}\in \Big((k_{j_0}  -1)-(k_{j_0}  -1)^{\frac{1}{3}},(k_{j_0}  -1)\Big),$$
 we have the inequality
\begin{equation}\label{8/9}
 \frac{8}{9}< \Bigg(1-\frac{1}{(k_{j_0}-1)^{\frac{2}{3}}}\Bigg) <\Bigg|\frac{2\pi \gamma_{j_0}\sqrt{\sigma_{j_0}(m_1m_2)}}{(k_{j_0}-1)\tilde{\delta}_i}\Bigg|<1,
\end{equation}
whenever $k_{j_0}>28$.
Applying part~\ref{Bessel-estimate-part-1} of Lemma~\ref{Bessel-estimate},  we have
$$\Bigg| J_{k_{j_0}-1}\Big(\frac{4\pi\sqrt{\sigma_{j_0}(\eta_i u m_1m_2)}}{|\sigma_{j_0} (s )| }\Big)\Bigg|=\Bigg| J_{k_{j_0}-1}\Bigg((k_{j_0}-1)\frac{4\pi\sqrt{\sigma_{j_0}(\eta_i u m_1m_2)}}{(k_{j_0}-1)|\sigma_{j_0} (s )|}\Bigg)\Bigg| \leq e^{a(1-x)}x^aJ_a(a),
$$
by taking $a=k_{j_0}-1$ and $x=\frac{4\pi\sqrt{\sigma_{j_0}(\eta_i u m_1m_2)}}{(k_{j_0}-1)|\sigma_{j_0} (s )|}$. The lemma is applicable since $$x< \frac{2\pi\gamma_{j_0} \sqrt{\sigma_{j_0}( m_1m_2)}}{(k_{j_0}-1)\tilde{\delta}_i} <1$$ by equation \eqref{8/9}. 
Using part~\ref{Bessel-estimate-part-2} of Lemma \ref{Bessel-estimate}, we obtain 
\begin{align}\label{Ja(a)}
J_a(a)\ll \frac{1}{a^{\frac{1}{3}}}=\frac{1}{(k_{j_0}-1)^{\frac{1}{3}}}.\end{align}
Moreover, equation \eqref{8/9}  implies that $\frac{16\tilde{\delta}_i}{(9\epsilon_{j_0})|\sigma_{j_0} (s )|}<x<\frac{2\tilde{\delta}_i}{|\sigma_{j_0} (s )|}.$ 
Therefore, we obtain
\begin{equation}\label{Exponential bound}
e^{a(1-x)}x^a = e^{a(1-x + \log x)} < e^{a\Bigg(1-\frac{16\tilde{\delta}_i}{(9\epsilon_{j_0})|\sigma_{j_0}(s )|}+\log\Big(\frac{2\tilde{\delta}_i}{|\sigma_{j_0} (s )|}\Big)\Bigg)}.
\end{equation}

We now need a device to keep track of the place where $|\sigma_{j}(s)|> 2 \tilde{\delta}_i$. The following consideration helps us to achieve this. Let  $M>1$  be an absolute constant, suitably chosen later.  
For each $j$, the quantity $\sigma_j(s) $ falls into one of the following three categories: 
\begin{itemize}
     \item $ |\sigma_j(s)|\leq 2 \tilde{\delta}_i$,
    \item $ 2 \tilde{\delta}_i < |\sigma_{j}(s)|\leq (M+1) \tilde{\delta}_i,$ or
        \item $|\sigma_j(s)|> (M+1)\tilde{\delta}_i$.
\end{itemize}
   We then consider an $r$-tuple $(a_1,\,\dots \, , a_r) $ with each $a_j\in \{0,1,2\}$ defined as follows: 
\begin{equation}\label{aj}
 a_j=\begin{cases}
			0, & \text{if} \,\,\, |\sigma_j(s)|\leq 2 \tilde{\delta}_i,  \\
            1, & \text{if}   \,\, \, 2 \tilde{\delta}_i < |\sigma_{j}(s)|\leq (M+1) \tilde{\delta}_i, \\
            2, &  \text{if}\,\,\,  |\sigma_j(s)|> (M+1)\tilde{\delta}_i.
		 \end{cases}
\end{equation}
For each $\ell \in \{0, 1, 2\}$, let $\mathfrak{I}_\ell = \{ j \mid a_j = \ell \}$. Clearly, $\{1, \dots, r\} = \mathfrak{I}_0 \sqcup \mathfrak{I}_1 \sqcup \mathfrak{I}_2 $.

For each $r$-tuple $(a_1,...\, ,a_r)$ as above, we set 
\begin{equation}\label{h}
 h=\sum_{j=1}^r a_j 3^{j-1}. 
 \end{equation}
For each $s \neq 0$, we have the corresponding $a_j$ depending on the value of $|\sigma_j (s)|$. Therefore, we have a corresponding $h(s)$. Note that $s\in A_i$ if and only if $h(s)=0$. We now partition the set $A_i'$ according to the values of $h(s)$. Let $A'_{i,h}$ denote the subset of $A'_i$ consisting of all $s$ such that $h(s)=h$. Finally, we have 
$$
A_i'=\cup_{h=1}^{3^r-1}A'_{i,h}.
$$ 
Hence, we have 
\begin{equation}\label{eqn:Ai-prime-Ai-prime-h}
   \sum_{s \in A^\prime_i} = \sum_{h=1}^{3^r-1} \sum_{s \in A^\prime_{i,h}}. 
\end{equation}
  For a fixed $h$ (recall that $i$ is already fixed), we now find bounds for the expression   
\begin{align}\label{A'ih}
\sum_{s\in A'_{i,h} }    \prod_{j=1}^r\left| J_{k_j-1} \Big(  \frac{4 \pi\sqrt{\sigma_j (\eta_i u m_1m_2 )}}{|\sigma_j(s)|}\Big)\right|.
\end{align}

\subsubsection{Bounds for $\mathfrak{I}_0$ and $\mathfrak{I}_2$.}\label{subsec:bounds-I0-I2}

For $j \in \mathfrak{I}_0$, we use the uniform bound given by Lemma \ref{Bessel-estimate}\ref{Bessel-estimate-part-4}.
We have  $$\left| J_{k_{j}-1} \Big(  \frac{4 \pi\sqrt{\sigma_{j} (\eta_i u m_1m_2 )}}{|\sigma_{j}(s)|}\Big)\right| \leq \min \Bigg((k_{j}-1)^{-\frac{1}{3}},\Big(  \frac{4 \pi\sqrt{\sigma_{j} (\eta_i u m_1m_2 )}}{|\sigma_{j}(s)|}\Big)^{-\frac{1}{3}}\Bigg)$$
$$\ll  \min \Big((k_{j}-1)^{-\frac{1}{3}},\Big(  \sqrt{\sigma_{j} (\eta_i u m_1m_2 )}\Big)^{-\frac{1}{3}}\Big)$$
as $|\sigma_{j}(s)| \leq 2 \tilde{\delta}_i.$
By equation \eqref{8/9},  $\sqrt{\sigma_{j} (\eta_i u m_1m_2 )}\geq \frac{4\sqrt{\sigma_{j} (\eta_i u )} (k_{j}-1)\tilde{\delta}_i}{9\pi\gamma_{j}},$ which after using in the above upper bound yields  
\begin{equation}\label{bound for l}
  \left| J_{k_{j}-1} \Big(  \frac{4 \pi\sqrt{\sigma_{j} (\eta_i u m_1m_2 )}}{|\sigma_{j}(s)|}\Big)\right| \ll_{F,\mathfrak{N},\mathfrak{n}} \min\left(  (k_{j}-1)^{-\frac{1}{3}}, (k_{j}-1)^{-\frac{1}{3}}  \right ) =(k_{j}-1)^{-\frac{1}{3}}.
\end{equation}
Now for $j \in \mathfrak{I}_2$  equation \eqref{Ja(a)} and \eqref{Exponential bound} implies 
\begin{equation}\label{bound for g}
\left| J_{k_{j}-1} \Big(  \frac{4 \pi\sqrt{\sigma_{j} (\eta_i u m_1m_2 )}}{|\sigma_{j}(s)|}\Big)\right|
\leq (k_{j}-1)^{-\frac{1}{3}}e^{(k_{j}-1)\Bigg(1-\frac{16 \tilde{\delta}_i}{(9\epsilon_{j})|\sigma_{j}(s )|}+\log\Big(\frac{2\tilde{\delta}_i}{|\sigma_{j} (s )|}\Big)\Bigg)}.
\end{equation}

\subsection{Estimates of the $J$-Bessel functions}\label{sec 3}

 We consider the sum  given in  equation~\eqref{A'ih},   
 $$
 \sum_{s\in A'_{i,h} }    \prod_{j=1}^r \left| J_{k_j-1} \Big(  \frac{4 \pi\sqrt{\sigma_j (\eta_i u m_1m_2 )}}{|\sigma_j(s)|}\Big)\right|.
 $$

We can express this as 
$$
\sum_{s\in A'_{i,h} }    \prod_{j\in \mathfrak{I}_0} \prod_{j\in \mathfrak{I}_1} \prod_{j\in \mathfrak{I}_2} 
$$
by breaking the inner product into products over the three sets $\mathfrak{I}_0$, $\mathfrak{I}_1$ and $\mathfrak{I}_2$.
It is important to note that the partition of $\{1, \dots, r\}$ into $\mathfrak{I}_0$, $\mathfrak{I}_1$ and $\mathfrak{I}_2$ does not depend on $s$, as $s$ varies in the lattice $A'_{i,h}$. This is essential to apply Lemma \ref{Bessel-estimate} smoothly in a phased manner. 

We now apply estimates for the $J$-Bessel function. We break this into two cases. In the first case, the set $\mathfrak{I}_1 \not = \emptyset$ and we address this case in \S \ref{step a} below. In the second case, the set $\mathfrak{I}_1 = \emptyset$, and this case is addressed in \S \ref{step b} below.

\subsubsection{Case: $\mathfrak{I}_1 \not = \emptyset$}\label{step a}
For any $j \in \mathfrak{I}_1$, let 
 \begin{align}\label{xalpha}
\tilde{x}_{j} (s) = \frac{4 \pi\sqrt{\sigma_{j} (\eta_i u m_1m_2 )}}{(k_{j}-1) |\sigma_{j}(s)|}.
\end{align}
Note that $0 < \tilde{x}_{j} (s) < 1$. We will be applying different bounds depending on whether $0 < \tilde{x}_{j} (s) < 1/3$, $1/3 \le  \tilde{x}_{j} (s) < 1/2$ or $1/2 \le  \tilde{x}_{j} (s) < 1$.

We temporarily drop $s$ from the notation and use $\tilde{x}_{j}$ for simplicity.  When $ 1/2 \leq \tilde{x}_{j}  <1,$ we use the uniform bound given by Lemma \ref{Bessel-estimate}, part \ref{Bessel-estimate-part-5}. This gives 
\begin{align}\label{x<1}
J_{k_{j}-1}((k_j-1) \tilde{x}_{j})\ll \frac{1}{(1-\tilde{x}_{j}^2)^{\frac{1}{4}     }  (k_{j}-1)^{\frac{1}{2}}}	\ll_{\mathfrak{n}, \mathfrak{N}, F} (k_{j}-1)^{-\frac{1}{2}}. 
\end{align}
The last step is justified due to the fact $\tilde{x}_{j} (s)$ cannot be arbitrarily close to $1$ as $s$ varies over a discrete subset of $\R^r$ and  $2\tilde{\delta}_i < |\sigma_{j}(s)| \leq (M+1)\tilde{\delta}_i.$

When $1/3 \leq  \tilde{x}_{j} < 1/2,$ we use Lemma \ref{Bessel-estimate}, parts \ref{Bessel-estimate-part-1} and \ref{Bessel-estimate-part-2} to obtain
  $$ 
  J_{k_{j}-1}((k_{j}-1) \tilde{x}_{j}) \ll_{\mathfrak{n}, \mathfrak{N}, F} e^{(k_{j}-1)(1-\tilde{x}_{j} + \log \tilde{x}_{j})}\cdot \frac{1}{(k_{j}-1)^{\frac{1}{3}}} $$
\begin{align}\label{x<1/2}
\ll_{\mathfrak{n}, \mathfrak{N}, F}  e^{(k_{j}-1)(1-\frac{1}{3} + \log \frac{1}{2})}\cdot \frac{1}{(k_{j}-1)^{\frac{1}{3}}}.
\end{align}
Finally, for $0 <  \tilde{x}_{j} <1/3,$ we obtain
\begin{align}\label{x<1/3}
J_{k_{j}-1}((k_{j}-1) \tilde{x}_{j})\ll_{\mathfrak{n}, \mathfrak{N}, F} e^{(k_{j}-1)(1 + \log \frac{1}{3})}\cdot \frac{1}{(k_{j}-1)^{\frac{1}{3}}}.
\end{align}
By considering  equation \eqref{x<1}, \eqref{x<1/2} and \eqref{x<1/3}, for all indices $j \in \mathfrak{I}_1$, we obtain   
\begin{equation}\label{x-bound}
 J_{k_{j}-1} \left(  \frac{4 \pi\sqrt{\sigma_{j} (\eta_i u m_1m_2 )}}{|\sigma_{j}(s)|}\right)
\ll_{\mathfrak{n}, \mathfrak{N}, F} (k_{j}-1)^{-\frac{1}{2}}.    
\end{equation}

 Let $\tau_{i,1},...,\tau_{i,r}$ denote the lengths of the sides of the fundamental parallelepiped of the lattice $\sigma(b_i\mathfrak{N})$. Let $\tilde\tau_i=\min (\tau_{i,1},\,...\,,\tau_{i,r}) $ and $\tilde{\epsilon}_i=\Big(\frac{\tilde \tau_i}{2}\Big)^r.$ By the choice of $\tilde{\epsilon}_i$, we note that a cube of volume $\tilde{\epsilon}_i$ can contain at most one lattice point of $\sigma( \mathfrak{b_i} \mathfrak{N})$, and hence also at most one lattice point of $\sigma (A^\prime_{i,h})$.  
Using the bounds in \ref{x-bound}, we obtain

Note that since the partition $\{1, \dots, r\} = \mathfrak{I}_0 \cup \mathfrak{I}_1 \cup \mathfrak{I}_2 $ is the same for any $s \in A^\prime_{i,h}$, we can interchange the summation and the products to get
\begin{align*}
\sum_{s\in A'_{i,h} }    \prod_{j\in \mathfrak{I}_0} \prod_{j\in \mathfrak{I}_1} \prod_{j\in \mathfrak{I}_2} \left| J_{k_j-1} \Big(  \frac{4 \pi\sqrt{\sigma_j (\eta_i u m_1m_2 )}}{|\sigma_j(s)|}\Big)\right| =  \prod_{j\in \mathfrak{I}_0} \prod_{j\in \mathfrak{I}_1} \prod_{j\in \mathfrak{I}_2} \sum_{s\in A'_{i,h} }  \left| J_{k_j-1} \Big(  \frac{4 \pi\sqrt{\sigma_j (\eta_i u m_1m_2 )}}{|\sigma_j(s)|}\Big)\right|
\end{align*}

Now, if $j \in \mathfrak{I}_0$, then $|\sigma_j(s)| \in  (0, 2 \tilde{\delta}_i]$. We write this interval as a union of smaller intervals of length $\tilde \tau_i/2$. That is, 
$$
(0, 2 \tilde{\delta}_i] = \cup_{\tilde m_j} \left( \frac{(\tilde m_j - 1) \tilde \tau_i}{2}, \frac{\tilde m_j \tilde \tau_i}{2} \right]. 
$$ 
Note that there are only finitely many possible $\tilde m_j$ as the original interval $(0, 2 \tilde{\delta}_i]$ is of finite length. Similarly, if $j \in \mathfrak{I}_1$ or $j \in \mathfrak{I}_2$, then $|\sigma_j(s)| \in  ( 2 \tilde{\delta}_i, (M+1) \tilde{\delta}_i]$ or $|\sigma_j(s)| \in  ( (M+1) \tilde{\delta}_i, \infty)$, respectively. In each of these cases, we again break up the intervals into smaller ones of length $\tilde \tau_i/2$. That is, we write  
$$
( 2 \tilde{\delta}_i, (M+1) \tilde{\delta}_i] = \cup_{\tilde m_j} \left( 2 \tilde{\delta}_i + \frac{(\tilde m_j - 1) \tilde \tau_i}{2}, 2 \tilde{\delta}_i + \frac{\tilde m_j \tilde \tau_i}{2} \right]
$$
and
$$
( (M+1) \tilde{\delta}_i, \infty) = \cup_{\tilde m_j} \left( (M+1) \tilde{\delta}_i + \frac{(\tilde m_j - 1) \tilde \tau_i}{2}, (M+1) \tilde{\delta}_i + \frac{\tilde m_j \tilde \tau_i}{2} \right].
$$
The number of $\tilde{m}_j$ in the first two cases is finite and infinite in the last case. 

Now, we take $\tilde m = (\tilde m_j)_{j=1}^r$ and consider the corresponding interval $I_{\tilde m_j} =  \left( * + \frac{(\tilde m_j - 1) \tilde \tau_i}{2}, * + \frac{\tilde m_j \tilde \tau_i}{2} \right]$ as above. Then, $I_{\tilde m} = \prod_{j=1}^r I_{\tilde m_j} \subset \R^r$ is hypercube of volume $\left( \frac{\tau_i}{2} \right)^r = \tilde \epsilon_i $ and hence contains at most one lattice point of $\sigma (A^\prime_{i,h})$.

\begin{align}\label{case3a}
\sum_{s\in A'_{i,h} }    \prod_{j\in \mathfrak{I}_0} \prod_{j\in \mathfrak{I}_1} \prod_{j\in \mathfrak{I}_2} \left| J_{k_j-1} \Big(  \frac{4 \pi\sqrt{\sigma_j (\eta_i u m_1m_2 )}}{|\sigma_j(s)|}\Big)\right| \ll \prod_{j \in \mathfrak{I}_0} \prod_{j \in \mathfrak{I}_1} \prod_{j \in \mathfrak{I}_2} \sum_{\substack{\tilde m, \\ \sigma(s) \in I_{\tilde m}}} \left| J_{k_j-1} \Big(  \frac{4 \pi\sqrt{\sigma_j (\eta_i u m_1m_2 )}}{|\sigma_j(s)|}\Big)\right|
\end{align}

By using equations \eqref{bound for l} for $j \in \mathfrak{I}_0$, \eqref{bound for g} for $ j \in \mathfrak{I}_2$ and \eqref{x-bound} for $j \in \mathfrak{I}_1$ the RHS of  \eqref{case3a} is bounded by
\begin{equation}
\ll \prod_{j \in \mathfrak{I}_0} (k_j - 1)^{-\frac{1}{3}} \prod_{j \in \mathfrak{I}_1} (k_j - 1)^{-\frac{1}{2}} \prod_{j \in \mathfrak{I}_2} (k_j - 1)^{-\frac{1}{3}} \sum_{\substack{\tilde m_j, \\ \sigma_j (s) \in I_{\tilde m_j}}}  e^{(k_{j}-1)\Bigg(1-\frac{16\delta_i}{9\epsilon_{j} |\sigma_{j}(s )|}+\log\Big(\frac{2\delta_i}{|\sigma_{j} (s )|}\Big)\Bigg)}
\end{equation}

Taking $y_{j}= y_j (s) = \frac{|\sigma_{j} (s)|}{\delta_j}$, the sum above becomes 
\begin{equation}
= \prod_{j \in \mathfrak{I}_0} (k_j - 1)^{-\frac{1}{3}} \prod_{j \in \mathfrak{I}_1} (k_j - 1)^{-\frac{1}{2}} \prod_{j \in \mathfrak{I}_2} (k_j - 1)^{-\frac{1}{3}} \sum_{\substack{\tilde m_j, \\ \sigma_j (s) \in I_{\tilde m_j}}} e^{(k_{j}-1)\Bigg(1-\frac{16}{9\epsilon_{j} y_j}+\log\Big(\frac{2}{y_j}\Big)\Bigg)}
\end{equation}

Note that each of the intervals $I_{\tilde m_j}$ has length $|I_{\tilde m_j}| = \epsilon^{1/r}/\delta_i$. Therefore, the last equation becomes
\begin{equation}
= \frac{\delta_i^r}{\epsilon_i} \prod_{j \in \mathfrak{I}_0} (k_j - 1)^{-\frac{1}{3}} \prod_{j \in \mathfrak{I}_1} (k_j - 1)^{-\frac{1}{2}} \prod_{j \in \mathfrak{I}_2} (k_j - 1)^{-\frac{1}{3}} \sum_{\substack{\tilde m_j, \\ \sigma_j (s) \in I_{\tilde m_j}}}  e^{(k_{j}-1)\Bigg(1-\frac{16}{9\epsilon_{j} y_j}+\log\Big(\frac{2}{y_j}\Big)\Bigg)} |I_{\tilde m_j}|
\end{equation}

Note that for $j \in \mathfrak{I}_0,\mathfrak{I}_1$ and $\mathfrak{I}_2$, the variables $y_j$ belong to the intervals $(0, 2], (2, M+1]$ and $(M+1, \infty)$ respectively. We denote these intervals by $I_j$. For $j \in \mathfrak{I}_0$ and $\mathfrak{I}_1$, we also denote by $\tilde I_j$ the slightly larger intervals $(a_j, b_j + \frac{\epsilon_i^{\frac{1}{r}}}{\delta_i}]$, where $I_j = (a_j, b_j]$. For $j \in \mathfrak{I}_2$, we set $\tilde I_j = I_j$. Since 
$$
e^{(k_{j}-1) \Big (-\frac{16}{ 9\epsilon_{j} y_{j}}+\log\Big(\frac{2}{y_{j}}\Big)\Big)}
$$ 
is a monotonically decreasing function of $y_j$ for $y_{j}>2,$
using the integral test, the above sum is bounded by the multiple integral

\begin{align*}
 \leq \frac{\delta_i^r}{\epsilon_i} & \left( \prod_{j \in \mathfrak{I}_0} (k_j - 1)^{-\frac{1}{3}}  \prod_{j \in \mathfrak{I}_1} (k_j - 1)^{-\frac{1}{2}} \idotsint\limits_{\tilde I_j} \prod dy_j\right) \\  &\left(\prod_{j \in \mathfrak{I}_2} (k_j - 1)^{-\frac{1}{3}} \idotsint\limits_{\tilde I_j}  e^{(k_{j}-1)\Bigg(1-\frac{16}{9\epsilon_{j} y_j}+\log\Big(\frac{2}{y_j}\Big)\Bigg)} \prod dy_j \right). 
\end{align*}
By absorbing some absolute constants, the above is
\begin{align}\label{alp}
 \ll   \left( \prod_{j \in \mathfrak{I}_0} (k_j - 1)^{-\frac{1}{3}}  \prod_{j \in \mathfrak{I}_1} (k_j - 1)^{-\frac{1}{2}}  \right) \left(\prod_{j \in \mathfrak{I}_2} (k_j - 1)^{-\frac{1}{3}} \int_M^\infty e^{(k_{j}-1)\Bigg(1-\frac{16}{9\epsilon_{j} y_j}+\log\Big(\frac{2}{y_j}\Big)\Bigg)} dy_j \right). 
\end{align}

Now, for a fixed $j \in \mathfrak{I}_2$, consider the integral 
$$
\int_M^\infty e^{(k_j - 1)\ \left( -\frac{16}{ 9\epsilon_{j} y_{j}}+ \log \left(\frac{2}{y_{j}}\right) \right)} dy_{j}.
$$ 

By  substituting $x_j = \frac{16}{9\epsilon_{j} y_{j}}$ and $dx_{j} = -\frac{16 dy_{j}}{9\epsilon_{j} y^2_j} $, the integral becomes 
\begin{align*}
& \left(\frac{9\epsilon_{j}  }{4}\right)  \int_0^{\frac{16}{9\epsilon_{j} M}}   \left(e^{-(k_{j}-1) x_{j}}\right)         \left(\frac{9\epsilon_{j} x_{j} }{8}\right)^{k_j-3} \, dx_{j} \\
& \leq \left(\frac{9\epsilon_{j}  }{4}\right) \int_0^{\frac{16}{9\epsilon_j M}}   e^{-(k_j -1) x_j}         \left(\frac{2 }{M}\right)^{k_j - 3}  \,dx_{j} \\
& = \left(\frac{9\epsilon_{j}  }{4}\right) \left(\frac{2 }{M}\right)^{k_j - 3}\int_0^{\frac{16}{9 \epsilon_j M}}   e^{-(k_j - 1) x_{j}}      \,dx_{j} \\ 
& =\left(\frac{9\epsilon_j  }{4}\right)  \left(\frac{2 }{M}\right)^{k_j-3}    
\left(  \frac{e^{-(k_j - 1)\frac{16}{9\epsilon_j M}}}{-(k_j - 1)}   + \frac{1}{k_j - 1} \right).
\end{align*}

Finally, putting together equations \eqref{case3a} and \eqref{alp}, we obtain 
\begin{equation}\label{a_jeq1}
 \sum_{s\in A'_{i,h} }    \prod_{j=1}^r \left| J_{k_j-1} \Big(  \frac{4 \pi\sqrt{\sigma_j (\eta_i u m_1m_2 )}}{|\sigma_j(s)|}\Big)\right| \ll   \prod_{j \in \mathfrak{I}_0} (k_j - 1)^{-\frac{1}{3}}  \prod_{j \in \mathfrak{I}_1} (k_j - 1)^{-\frac{1}{2}} \prod_{j \in \mathfrak{I}_2} \left(\frac{2 e}{M}\right)^{(k_j - 3)}    (k_j - 1)^{-\frac{4}{3}},    
\end{equation}
in the case $\mathfrak{I}_1 \neq \emptyset$.

\subsubsection{Case: $\mathfrak{I}_1 = \emptyset$}\label{step b}
In this case,  we use the bounds in \S \ref{subsec:bounds-I0-I2} and the arguments above to obtain
 \begin{align}\label{a_jneq1}
\sum_{s\in A'_{i,h} }    \prod_{j=1}^r \left| J_{k_j-1} \Big(  \frac{4 \pi\sqrt{\sigma_j (\eta_i u m_1m_2 )}}{|\sigma_j(s)|}\Big)\right|
\ll    \prod_{\mathfrak{I}_2} \left(\frac{2 e}{M}\right)^{(k_j-3)}  (k_j-1)^{-\frac{4}{3}} \prod_{\mathfrak{I}_0} (k_j-1)^{-\frac{1}{3}}.
\end{align}

\subsection{Putting everything together.}\label{last step}

We now use \eqref{eqn:Ai-prime-Ai-prime-h} and the bounds obtained above to bound the sum over $s \in A^\prime_i$. 

For any $1 \leq \rho \leq r$, suppose that $h \in [3^{\rho-1},3^{\rho}-1]$. The tuple corresponding to $h = 3^{\rho-1}$ is $(\delta_{j\rho})$. In this case $\mathfrak{I}_1 = \{\rho\}$ and $\mathfrak{I}_0 = \{1, \dots, r\} \setminus \{\rho\}$. The contribution of this $h$ to the sum is bounded above by 
$$ 
(k_{\rho}-1)^{-\frac{1}{2}}\prod_{j \neq \rho}(k_{j}-1)^{-\frac{1}{3}}.
$$ 
Now, consider $ h \in (3^{\rho-1},3^{\rho}-1],$ and its corresponding $r$-tuple $(a_1,\, \dots \,,a_r)$. We have $a_j=0$ for all $j>\rho$. For $j\leq \rho$, either $a_j=0$ for all $j< \rho$ (and in that case $a_\rho = 2$), or $a_j=1$ or $2$ for some $j<\rho$. In the former case, the contribution of this $h$ to the sum is bounded above by  
$$ 
\left(\frac{2 e}{M}\right)^{(k_{\rho}-3)}  (k_{\rho}-1)^{-\frac{4}{3}}\prod_{j\neq \rho}(k_{j}-1)^{-\frac{1}{3}}.
$$ 
In the latter case, contribution of this $h$ to the sum is bounded above by 
$$ 
\o\left((k_{\rho}-1)^{-\frac{1}{2}}\prod_{'\neq \rho}(k_{j}-1)^{-\frac{1}{3}}\right).
$$
Therefore, for any $h \in [3^{\rho-1},3^{\rho}-1]$, the contribution of $h$ to the sum is bounded above by  
$$ 
(k_{\rho}-1)^{-\frac{1}{2}}\prod_{j\neq \rho}(k_{j}-1)^{-\frac{1}{3}},
$$
and the dominant terms are when $h = 3^{\rho - 1}$. Varying $h$ from 1 to $3^r-1$ and collecting dominant terms, we obtain
\begin{align}\label{sinAip}
\sum_{s\in A_i' }    \prod_{j=1}^r\left| J_{k_j-1} \Big(  \frac{4 \pi\sqrt{\sigma_j (\eta_i u m_1m_2 )}}{|\sigma_j(s)|}\Big)\right|
\ll \sum_{\rho = 1}^r \left( (k_{\rho}-1)^{-\frac{1}{2}}\prod_{j \neq \rho}(k_{j}-1)^{-\frac{1}{3}}\right).
\end{align}

By combining equations \eqref{Ai'raw} and \eqref{sinAip}, for fixed $i$ and $u\in U$  with $\eta_i u\in F^+$, we have 
 $$ 
\sum_{s\in  \mathfrak{\mathtt{b}_i} \mathfrak{N}/\pm, s\neq 0 }\omega_{\mathrm{f}}(s \mathtt{b}_i^{-1} ) S_{\omega_\mathfrak{N}} (m_1,m_2;\eta_i u \mathtt{b}_i^{-2};s\mathtt{b}_i^{-1})
 \frac{\sqrt{\Nr(\eta_i u)}}{\Nr(s)}
 \prod_{j=1}^r\frac{2\pi}{(\sqrt{-1})^{k_j}}J_{k_j-1} \Big(  \frac{4 \pi\sqrt{\sigma_j (\eta_i u m_1m_2 )}}{|\sigma_j(s)|}\Big) 
    $$
  $$ 
= \sum_{s\in A_i }\omega_{\mathrm{f}}(s \mathtt{b}_i^{-1} ) S_{\omega_\mathfrak{N}} (m_1,m_2;\eta_i u \mathtt{b}_i^{-2};s\mathtt{b}_i^{-1})
 \frac{\sqrt{\Nr(\eta_i u)}}{\Nr(s)}   
 \prod_{j=1}^r\frac{2\pi}{(\sqrt{-1})^{k_j}}J_{k_j-1} \Big(  \frac{4 \pi\sqrt{\sigma_j (\eta_i u m_1m_2 )}}{|\sigma_j(s)|}\Big)
$$
$$
+\O \left( \sum_{\rho=1}^r \left( (k_{\rho}-1)^{-\frac{1}{2}}\prod_{j\neq \rho}(k_{j}-1)^{-\frac{1}{3}}\right)\right) . 
$$    
By varying $i$ and $u\in U$ with $\eta_i u\in F^+$, we obtain the following refinement of \eqref{PTF}:
$$ \frac{e^{2\pi tr_{\mathbb{Q}}^F (m_1+m_2)}}{{\psi(\mathfrak{N})}}\Bigg[\prod_{j=1}^r  \frac{(k_j-2)!}{(4\pi \sqrt{\sigma_j(m_1m_2)})^{k_j-1}}\Bigg]
\sum_{{\phi}\in \mathcal{F} }\frac{\lambda_\mathfrak{n}^\phi W_{m_1}^\phi(1) \overline{W_{m_2}^\phi(1)}}{\|\phi\|^2}
=\, \Hat{T}(m_1,m_2,\mathfrak{n})\frac{\sqrt{d_F\Nr(\mathfrak{n})}}{\omega_\mathfrak{N}(m_1/s)\omega_{\mathrm{f}}(s)}
 $$   
  $$ 
+\sum_{i=1}^t \sum_{u\in U, \eta_i u\in F^+}\sum_{s\in A_i }\Bigg\{ \omega_{\mathrm{f}}(s\mathtt{b}_i^{-1} ) S_{\omega_\mathfrak{N}} (m_1,m_2;\eta_i u \b_i^{-2};s\b_i^{-1})
 $$
 $$ 
\frac{\sqrt{\Nr(\eta_i u)}}{\Nr(s)}\times \prod_{j=1}^r\frac{2\pi}{(\sqrt{-1})^{k_j}}J_{k_j-1} \Big(  \frac{4 \pi\sqrt{\sigma_j (\eta_i u m_1m_2 )}}{|\sigma_j(s)|}\Big)
 \Bigg\}+
\O_{A_i,u,\eta_i} \left(\sum_{j=1}^r \left( (k_{j}-1)^{-\frac{1}{2}}\prod_{j'\neq j}(k_{j'}-1)^{-\frac{1}{3}}\right)\right).
$$
This completes the proof of Theorem \ref{Main theorem}. 
\end{proof}
\bigskip

\section{Main term in special cases: Proof of Theorem \ref{Single term}}\label{sec: special-cases}

This section considers the main term in Theorem \ref{Main theorem}, namely, the sum 
$$
\, \Hat{T}(m_1,m_2,\mathfrak{n})\frac{\sqrt{d_F\Nr(\mathfrak{n})}}{\omega_\mathfrak{N}(m_1/s)\omega_{\mathrm{f}}(s)}
+\sum_{i=1}^t \sum_{u\in U, \eta_i u\in F^+}\sum_{s\in A_i }\Bigg\{ \omega_{\mathrm{f}}(s\b_i^{-1} ) S_{\omega_\mathfrak{N}} (m_1,m_2;\eta_i u \b_i^{-2};s\b_i^{-1})
 $$
 $$ 
\frac{\sqrt{\Nr(\eta_i u)}}{\Nr(s)}\times \prod_{j=1}^r\frac{2\pi}{(\sqrt{-1})^{k_j}}J_{k_j-1} \Big(  \frac{4 \pi\sqrt{\sigma_j (\eta_i u m_1m_2 )}}{|\sigma_j(s)|}\Big)
 \Bigg\}, 
 $$ 
 in special cases.  We wish to obtain a lower bound for this, which involves a triple sum. We are able to do this in special cases when the triple sum simplifies to a single term.  

We now assume that the field $F$ has odd narrow class number.  Note that, for any abelian group $A$, the quotient group $A/A^2$ is $2$-torsion. If the quotient is a finite group, then its cardinality is  $|A/A^2|=2^a$ and we denote this exponent as $\dim_2(A):=a$. If the group is infinite, then we denote $\dim_2 (A) = \infty$. Recall that we have $U={\mathcal{O}^\times}/{\mathcal{O}^\times}^2$ and we let $U^+=U\cap F^+$.  We also denote the narrow class group of $F$ by $C^+$. Recall that the narrow class group is the set of equivalence classes of fractional ideals up to totally positive principal ideals.

Recall that $t$ denotes the number of solutions to the equation $ [\mathfrak{b}]^2[\mathfrak{n}] = 1$ in the ideal class group.
 \begin{lemma}[{{\cite[Prop.\ 2.4]{EMP}}}]\label{t=1}
 If $F$ has odd narrow class number, then $t=1$ and $|U^+|=1.$
 \end{lemma}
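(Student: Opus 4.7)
The plan is to extract both claims from the standard exact sequence relating the wide and narrow class groups to the image of the sign map. Write $r=[F:\Q]$, let $C$ denote the (wide) ideal class group of $\mathcal{O}$, and let $C^{+}$ denote the narrow class group. Let $\mathrm{sgn}:\mathcal{O}^{\times}\to\{\pm 1\}^{r}$ be the total signature map, sending $u$ to $(\mathrm{sgn}(\sigma_{1}(u)),\dots,\mathrm{sgn}(\sigma_{r}(u)))$. The standard argument identifying when a principal ideal is trivial in $C^{+}$ gives the exact sequence
\[
\mathcal{O}^{\times}\xrightarrow{\mathrm{sgn}}\{\pm 1\}^{r}\longrightarrow C^{+}\longrightarrow C\longrightarrow 1,
\]
so that $|C^{+}|=|C|\cdot\tfrac{2^{r}}{|\mathrm{sgn}(\mathcal{O}^{\times})|}$.

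Now assume $|C^{+}|$ is odd. Since $|C|$ divides $|C^{+}|$, the class number $|C|$ is odd. The second factor $2^{r}/|\mathrm{sgn}(\mathcal{O}^{\times})|$ is a power of $2$, and its product with $|C|$ is odd, so it must equal $1$. Hence $\mathrm{sgn}:\mathcal{O}^{\times}\to\{\pm 1\}^{r}$ is surjective. From here I would deduce the two statements separately.

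For $t=1$: $t$ counts solutions $[\mathfrak{b}]\in C$ to $[\mathfrak{b}]^{2}=[\mathfrak{n}]^{-1}$. Since $|C|$ is odd, $2$ is invertible in $\mathbb{Z}/|C|\mathbb{Z}$, so the squaring endomorphism $x\mapsto x^{2}$ on the abelian group $C$ is an automorphism. Therefore the equation has a unique solution in $C$, giving $t=1$.

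For $|U^{+}|=1$: by Dirichlet's unit theorem $\mathcal{O}^{\times}\cong\{\pm 1\}\times\mathbb{Z}^{r-1}$, so $|U|=|\mathcal{O}^{\times}/(\mathcal{O}^{\times})^{2}|=2^{r}$. The sign map kills squares, so it factors through $U$ as $\overline{\mathrm{sgn}}:U\to\{\pm 1\}^{r}$. By the surjectivity established above, $\overline{\mathrm{sgn}}$ is a surjection between finite groups of equal order $2^{r}$, hence an isomorphism. The kernel of $\overline{\mathrm{sgn}}$ is exactly the image of the totally positive units in $U$, which is $U^{+}$. Thus $|U^{+}|=1$.

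The only step that requires care is the identification of $U^{+}$ with $\ker(\overline{\mathrm{sgn}})$; this is immediate because a class $[u]\in U$ has a totally positive representative iff $u(\mathcal{O}^{\times})^{2}$ meets $F^{+}$, which (as squares are totally positive) happens iff $u$ itself is totally positive, i.e.\ iff $\mathrm{sgn}(u)=(+,\dots,+)$. No genuine obstacle arises beyond setting up the exact sequence cleanly.
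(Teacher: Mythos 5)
Your proof is correct and differs from the paper's in being self-contained where the paper delegates to two external references. For $t=1$ the paper notes that the class number divides the narrow class number and then invokes \cite[Example 5.16]{KL}; your argument (squaring is an automorphism of an odd-order abelian group, so $[\mathfrak{b}]^2[\mathfrak{n}]=1$ has exactly one solution) is that same observation made explicit. For $|U^+|=1$ the paper quotes the inequality $\dim_2(U^+)\le\dim_2(C^+)$ from \cite[Prop.\ 2.4]{EMP}; you instead derive surjectivity of the signature map $\mathcal{O}^\times\to\{\pm 1\}^r$ from the exact sequence $\mathcal{O}^\times\to\{\pm 1\}^r\to C^+\to C\to 1$ together with oddness of $|C^+|$, then compare orders via Dirichlet's unit theorem to see that the induced map $\overline{\mathrm{sgn}}:\mathcal{O}^\times/(\mathcal{O}^\times)^2\to\{\pm 1\}^r$ is an isomorphism with trivial kernel. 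Your route exposes the underlying mechanism (surjectivity of the signature map is the real content) rather than hiding it behind the cited inequality, at the price of a somewhat longer argument. You also correctly address the one subtlety the paper glosses over: $U$ is merely a set of representatives, not a group, so the identification $|U^+|=|\ker\overline{\mathrm{sgn}}|$ needs the remark that total positivity of a representative depends only on its class modulo squares, which you supply.
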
 
 \begin{proof}
 Since the class number is a divisor of the narrow class number, the class number is odd. As in \cite[Example 5.16]{KL}, the equation $[\mathfrak{b}]^2[\mathfrak{n}]=1$ has a unique solution, 
which implies $t=1$.
By \cite[Proposition 2.4]{EMP}, we  have $\dim_2(U^+)\leq \dim_2(C^+)$. Since, the latter is $0$, this implies that $|U^+|=1$.
 \end{proof}
Lemma \ref{t=1} implies that we may take $\eta_1=1$,  and $|\{u\in U \, : \, \eta_i u\in F^+$ for some  $i=1,\dots, t\}|=1$. 
Note that the set $A_1$ still need not be a singleton set.
In the next few lemmas, we illustrate some cases in which  $|A_1|=1$ and in those cases $\sum_{s\in A_1}$ is a single term.
 \begin{lemma}\label{A1}
Suppose the $\inf\{\|\sigma(s)\|  \, : \, s\in \mathfrak{b}_1\mathfrak{N}/\pm, s\neq 0 \}=\delta_1$ is attained for some $s_0$ such that  $\sigma(s_0)=(a,a,\,...\hspace{1mm},\hspace{1mm}a)=\Big(\frac{\delta_1}{\sqrt{r}},\frac{\delta_1}{\sqrt{r}},\,...\hspace{1mm},\hspace{1mm}\frac{\delta_1}{\sqrt{r}}\Big). $   Then  $A_1=\Big\{\sigma(s_0) \Big\}.$ 
\end{lemma}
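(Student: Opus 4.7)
The plan is to reduce the constraint $s \in A_1$ to a coordinate-wise equality by a tightness argument, and then use the non-vanishing of Galois conjugates of a nonzero element of $F$ to pin $s$ down to $\pm s_0$. The lemma is essentially saying that $s_0$ sits at the ``diagonal corner'' of the cube cutting out $A_1$, and that by extremality this is the only lattice point of $\sigma(\mathfrak{b}_1\mathfrak{N})$ available there.

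First, I would combine the coordinate bound in the definition of $A_1$ with the extremality of $\delta_1$. For any $s\in A_1$ the condition $|\sigma_j(s)| \leq 2\tilde{\delta}_1 = \delta_1/\sqrt{r} = a$ for all $j$ immediately yields
\[ \|\sigma(s)\|^2 \;=\; \sum_{j=1}^r \sigma_j(s)^2 \;\leq\; r\bigl(\delta_1/\sqrt{r}\bigr)^2 \;=\; \delta_1^2. \]
On the other hand, since $s \neq 0$ in $\mathfrak{b}_1\mathfrak{N}/\pm$, the infimum definition of $\delta_1$ forces $\|\sigma(s)\| \geq \delta_1$. Both inequalities must therefore be equalities, and equality in the Euclidean bound under the coordinate constraints $\sigma_j(s)^2 \leq a^2$ is attained only when each coordinate saturates its bound. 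Hence $|\sigma_j(s)| = a = \sigma_j(s_0)$ for every $j$, so $\sigma_j(s) = \epsilon_j\, \sigma_j(s_0)$ for some signs $\epsilon_j \in \{\pm 1\}$.

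The decisive step is then to show the sign vector $(\epsilon_1,\dots,\epsilon_r)$ must be constant. Consider $s - s_0 \in F$: its $j$-th embedding equals $(\epsilon_j - 1)a$, which vanishes exactly when $\epsilon_j = +1$. A nonzero element of the number field $F$ cannot have any Galois conjugate equal to zero, since otherwise $0$ would be a root of its irreducible minimal polynomial over $\mathbb{Q}$, forcing that polynomial to be $X$ and the element to be $0$. Applying this to $s - s_0$: either $s - s_0 = 0$, giving $s = s_0$, or else every $\epsilon_j = -1$, in which case $s = -s_0$. In $\mathfrak{b}_1\mathfrak{N}/\pm$ these two possibilities coincide, so $A_1$ consists of the single class of $s_0$, which is precisely the asserted statement.

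There is no serious obstacle here; the only nontrivial ingredient is the non-vanishing of conjugates of a nonzero element of a number field, which follows from irreducibility of the minimal polynomial. Everything else is a direct manipulation of the Euclidean norm inequality and the definition of $\delta_1$, and the overall structure is simply a tightness-plus-rigidity argument.
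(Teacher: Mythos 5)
Your proof is correct and takes essentially the same route as the paper: both first force $\|\sigma(s)\|=\delta_1$ from the two-sided bound, deduce $|\sigma_j(s)|=\delta_1/\sqrt r$ for each $j$, and then invoke injectivity of the real embeddings to conclude $s=\pm s_0$ (you phrase this via the non-vanishing of Galois conjugates of $s-s_0$, the paper via injectivity of a single $\sigma_{j_0}$, which is the same fact). Your write-up is, if anything, slightly more explicit about the squeeze step $\delta_1 \le \|\sigma(s)\| \le \delta_1$, which the paper asserts without detail.
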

\begin{proof}
Let $s\in A_1$. We have $|\sigma_j(s)|\leq \frac{\delta_1}{\sqrt{r}}$ for all $j$. Hence  $A_1\subset S_{\delta_1},$ where 
$$
S_{\delta_1}=\{ s\in \mathfrak{b}_1\mathfrak{N}/\pm \, : \, \sigma^2_1(s)+ \cdots +\sigma_r^2(s)\leq \delta^2_1\}.
$$ 
Let $s'\in A_1\cap S_{\delta_1}. $ By the choice of $\delta_1$, we have  $s'\in \partial A_1\cap S_{\delta_1}$ and hence $|\sigma_j(s')|=\frac{\delta_1}{\sqrt{r}} $ for all $j=1,...,r.$
Therefore $s'$ is equal to  $  \left((-1)^{m_1}\frac{\delta_1}{\sqrt{r}},(-1)^{m_2}\frac{\delta_1}{\sqrt{r}},\,...\hspace{1mm},\hspace{1mm}(-1)^{m_r}\frac{\delta_1}{\sqrt{r}}\right)$    with $m_j=0,1$ for $j=1,...\,,r$.
If  $m_j=0$ or $m_j=1$ for all $j$, then $s'=s$. In any other case, we have $m_{j_0}=0$ for some $j_0$, so that $\sigma_{j_0}(s)=\sigma_{j_0}(s')$.  However, $\sigma_{j_0}$  injective implies $s'=s_0=\frac{\delta_1}{\sqrt{r}}$.
\end{proof}
\begin{lemma}\label{deltaA_1}
Let $\mathfrak{b}_1\mathfrak{N}=\mathcal{O}$. Then $\delta_1=\sqrt{r}$ and $A_1=\{\sigma(1)\}.$
\end{lemma}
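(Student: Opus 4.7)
The plan is to prove the two claims in sequence, with the second one following directly from Lemma~\ref{A1}.

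First I would compute $\delta_1$. Since $\mathfrak{b}_1\mathfrak{N}=\mathcal{O}$, any nonzero $s \in \mathfrak{b}_1\mathfrak{N}$ is a nonzero algebraic integer in $F$, so $|N'(s)| = \prod_{j=1}^r |\sigma_j(s)|$ is a nonzero rational integer and therefore $|N'(s)| \geq 1$. Applying the AM--GM inequality to the nonnegative reals $\sigma_1(s)^2, \dots, \sigma_r(s)^2$ gives
$$\|\sigma(s)\|^2 \;=\; \sum_{j=1}^r \sigma_j(s)^2 \;\geq\; r\left(\prod_{j=1}^r \sigma_j(s)^2\right)^{1/r} \;=\; r\,|N'(s)|^{2/r} \;\geq\; r,$$
so $\|\sigma(s)\| \geq \sqrt{r}$ for every nonzero $s \in \mathcal{O}$. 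On the other hand $s_0 = 1 \in \mathcal{O}$ satisfies $\sigma(1) = (1,\dots,1)$ and $\|\sigma(1)\| = \sqrt{r}$, so the infimum is attained and $\delta_1 = \sqrt{r}$.

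Second, I would invoke Lemma~\ref{A1}. With $\delta_1 = \sqrt{r}$, the element $s_0 = 1$ realises the infimum and has $\sigma(s_0) = (1,1,\dots,1) = (\delta_1/\sqrt{r}, \dots, \delta_1/\sqrt{r})$, which is precisely the hypothesis of Lemma~\ref{A1}. Applying that lemma with $\mathfrak{b}_1\mathfrak{N} = \mathcal{O}$ gives $A_1 = \{\sigma(1)\}$, completing the proof.

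There is no serious obstacle here; the only content is recognising that minimal-norm nonzero elements of $\mathcal{O}$ are forced by AM--GM and integrality of the field norm to have Euclidean norm at least $\sqrt{r}$, with equality forcing all embeddings to have absolute value $1$ and the field norm to equal $\pm 1$. The totally real assumption then means only $\pm 1$ achieves this, which is exactly what Lemma~\ref{A1} packages.
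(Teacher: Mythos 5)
Your proof is correct and follows essentially the same route as the paper; both hinge on the integrality of the field norm and an AM--GM-type bound, then invoke Lemma~\ref{A1} to pin down $A_1$. The only (cosmetic) difference is that you apply AM--GM once directly to $\sigma_1(s)^2,\dots,\sigma_r(s)^2$, whereas the paper strings together Cauchy--Schwarz and AM--GM on $\sigma_1(s),\dots,\sigma_r(s)$; your version is marginally more streamlined but yields the identical bound $\|\sigma(s)\|^2\geq r\Nr(s)^{2/r}\geq r$ with the same equality analysis.
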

\begin{proof}
We wish to minimize $\inf\{\|\sigma(s)\| \, : \, s\in \mathcal{O}/\pm, s\neq 0 \}.$ This is equivalent to minimize $\| \sigma(s)\|^2 =\sigma_1^2(s)+ \cdots +\sigma_r^2(s). $ For a given $s$, let
$$\sigma(s')=\left((-1)^{m_1}\sigma_1(s),(-1)^{m_2}\sigma_2(s),\,...\hspace{1mm},\hspace{1mm}(-1)^{m_r}\sigma_r(s)\right).$$ 
Then, $\|\sigma(s')\|=\|\sigma(s)\|. $ Thus, without loss of generality, we can consider minimizing $\|\sigma(s)\|^2$ on the set $\{s\, : \, s\in\mathcal{O}/\pm, s\neq 0, \sigma_j(s)\geq 0$ for $j=1,...\,,r$$\}.$ Using the Cauchy-Schwartz inequality, we have 
$$ \sqrt{(1^2+...+1^2)( \sigma_1^2(s)+ ... +\sigma_r^2(s))}\geq \sigma_1(s)+...+\sigma_r(s). $$ 
Using the AM-GM inequality, $$ \frac{\sigma_1(s)+...+\sigma_r(s)}{r}\geq (\sigma_1(s)\times...\times \sigma_r(s))^{\frac{1}{r}}=\Nr(s)^{\frac{1}{r}}.$$
The equality holds when $\sigma_1(s)= \cdots =\sigma_r(s). $ In such case $\sigma_1^2(s)+ ... +\sigma_r^2(s)=r\sigma_1^2(s)=r(\Nr(s))^{\frac{2}{r}}. $ As $\Nr(s)\in \mathbb{N},$ the minimum value of $r(\Nr(s))^{\frac{2}{r}}=r,$ which happens when $\Nr(s)=1$ and $\sigma_1^2(s)=1$ implying $\sigma_1(s)=1.$ Thus $\delta_1=\sqrt{r}$ and on applying Lemma \ref{A1} we have $A_1=\{\sigma(1)\}.$
\end{proof}
 \begin{lemma}\label{A1A1}
Let $\mathfrak{b}_1\mathfrak{N}$ be an ideal in $\mathcal{O}$  such that $\mathfrak{b}_1\mathfrak{N}=\tilde{s}\mathcal{O}$ with $\tilde{s}\in \mathbb{Z}.$  Then, $\delta_1=|\tilde{s}|\sqrt{r}$ and $A_1=\{\sigma(|\tilde{s}|)\}.$
\end{lemma}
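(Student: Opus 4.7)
The plan is to reduce Lemma \ref{A1A1} to the already established Lemma \ref{deltaA_1} via a simple rescaling argument, and then invoke Lemma \ref{A1} to identify $A_1$ as a singleton.

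First I would observe that since $\mathfrak{b}_1\mathfrak{N} = \tilde s \mathcal{O}$ with $\tilde s \in \mathbb{Z}$, every nonzero element $s \in \mathfrak{b}_1\mathfrak{N}/\pm$ can be written uniquely as $s = \tilde s \, t$ for some nonzero $t \in \mathcal{O}/\pm$. Because $\tilde s$ is a rational integer, each real embedding $\sigma_j$ fixes $\tilde s$, and therefore $\sigma_j(s) = \tilde s \, \sigma_j(t)$. Squaring and summing over $j = 1,\dots,r$ gives $\|\sigma(s)\| = |\tilde s| \cdot \|\sigma(t)\|$. Consequently,
\[
\delta_1 \;=\; \inf\{\|\sigma(s)\| : s \in \mathfrak{b}_1\mathfrak{N}/\pm,\, s \neq 0\} \;=\; |\tilde s| \cdot \inf\{\|\sigma(t)\| : t \in \mathcal{O}/\pm,\, t \neq 0\}.
\]
By Lemma \ref{deltaA_1}, the infimum on the right equals $\sqrt{r}$ and is attained precisely at $t = 1$ (up to sign). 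This gives $\delta_1 = |\tilde s|\sqrt{r}$, and the minimum is attained at $s_0 := |\tilde s|$, for which $\sigma(s_0) = (|\tilde s|, |\tilde s|, \dots, |\tilde s|) = \bigl(\tfrac{\delta_1}{\sqrt{r}}, \dots, \tfrac{\delta_1}{\sqrt{r}}\bigr)$.

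Finally, since $s_0$ satisfies exactly the hypothesis of Lemma \ref{A1}, namely that the infimum $\delta_1$ is attained at some element whose image under $\sigma$ has all coordinates equal to $\delta_1/\sqrt{r}$, Lemma \ref{A1} yields $A_1 = \{\sigma(s_0)\} = \{\sigma(|\tilde s|)\}$, completing the proof.

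There is no substantial obstacle here: the whole argument is a rescaling by the rational integer $\tilde s$, reducing to the already-proved case $\mathfrak{b}_1\mathfrak{N} = \mathcal{O}$. The only point that needs care is the observation that rational integers act on $\sigma(\mathcal{O}) \subset \mathbb{R}^r$ by diagonal scaling (which uses crucially that $\tilde s \in \mathbb{Z}$, not merely $\tilde s \in \mathcal{O}$), and that the bijection $t \mapsto \tilde s\, t$ between $\mathcal{O}/\pm$ and $\tilde s \mathcal{O}/\pm$ preserves the minimizer up to this scaling factor.
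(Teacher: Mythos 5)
Your proof is correct, and it is essentially the same approach as the paper's, but you streamline the reduction. The paper's proof re-runs the Cauchy--Schwarz/AM--GM estimate of Lemma \ref{deltaA_1} inside the proof of Lemma \ref{A1A1}, and then bounds $\Nr(s)$ from below by invoking divisibility: if $s=\tilde s s'$ with $s'\in\mathcal{O}$ then $\Nr(\tilde s)\mid\Nr(s)$, so $\Nr(s)\geq\Nr(\tilde s)$, and equality is realized at $s=|\tilde s|$. You instead observe that because $\tilde s\in\mathbb{Z}$ each embedding fixes $\tilde s$, so $\sigma(\tilde s\mathcal{O})=|\tilde s|\cdot\sigma(\mathcal{O})$ as a subset of $\mathbb{R}^r$, and multiplication by $\tilde s$ is a bijection $\mathcal{O}/\pm\to\tilde s\mathcal{O}/\pm$ rescaling $\|\sigma(\cdot)\|$ by $|\tilde s|$. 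This lets you cite Lemma \ref{deltaA_1} as a black box rather than reproving its estimate, which is a genuine simplification; it also makes visible exactly where the hypothesis $\tilde s\in\mathbb{Z}$ is used (a general $\tilde s\in\mathcal{O}$ would not give a diagonal homothety on $\mathbb{R}^r$). Both proofs then conclude via Lemma \ref{A1} in the same way, since the minimizer $s_0=|\tilde s|$ satisfies $\sigma(s_0)=(\delta_1/\sqrt{r},\dots,\delta_1/\sqrt{r})$. No gaps.
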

\begin{proof}
We wish to minimize $\inf\{\|\sigma(s)\|  \, : \, s\in \mathfrak{b}_1\mathfrak{N}/\pm, s\neq 0 \}.$ Without loss of generality this is equivalent to minimizing $\|\sigma(s)\|^2 =\sigma_1^2(s)+ ... +\sigma_r^2(s)  $ on the set 
 $$
 B=\{s\in \mathfrak{b}_1\mathfrak{N}/\pm :\,s\neq 0,\,\sigma_j(s)\geq 0 \textrm{ for } j=1,..,r \}.
 $$ 
 Proceeding similar to the argument in Lemma \ref{deltaA_1}, the minimum possible value of $\|\sigma(s)\|^2$ is  $r(\Nr(s))^{\frac{2}{r}}. $ 
 For $s\in B, $  there exists $s'\in \mathcal{O}$ such that $s=\tilde{s}s'$. Since norm is multiplicative, the  minimum value for $r(\Nr(s))^{\frac{2}{r}} $ is attained at $\tilde s$. This shows $\| \sigma(s) \|^2$ attains its minimum value $r(\Nr(\tilde{s}))^{\frac{2}{r}}$ at $\sigma(\tilde s)=(|\tilde{s}|,\,...\,,|\tilde{s}|).$ By Lemma \ref{A1}, we get $A_1=\{\sigma(|\tilde{s}|)\}.$
\end{proof}
\begin{remark}
It must be noted that Lemma \ref{A1A1} is a sufficient condition to have $|A_1|=1.$
Therefore in the hypothesis of  Theorem \ref{Single term}, the condition that $\mathfrak{b}_1\mathfrak{N}=\tilde{s}\mathcal{O} $ with $\tilde{s}\in \mathbb{Z}$   can be replaced  with the condition that $|\{s \in \mathfrak{b}_1\mathfrak{N}/\pm \, : \, s\neq 0, \|\sigma(s)\|=\delta_1\}|=1$.
\end{remark}

We now consider a sufficient condition for the non-vanishing of certain Kloosterman sums that helps us to prove Theorem \ref{Main theorem 2}. 
 \begin{lemma}\label{Kloosterman sum nonzero}
 Let  $\mathfrak{N}=\tilde{s}\mathcal{O}$ be unramified with $\tilde{s}\in \mathbb{N},$  $\tilde{s}$ be squarefree. Let $\tilde{m}_1,\tilde{m}_2 \in \mathfrak{d}^{-1}_+$ and  $\omega_{\mathfrak{N}}$  be trivial. Then   $$S_{\omega_\mathfrak{N}} (\tilde{m}_1,\tilde{m}_2;1;\tilde{s})\neq 0.$$
 \end{lemma}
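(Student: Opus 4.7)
The plan is to leverage the product decomposition of the global Kloosterman sum recalled in Section~\ref{sec: Peetersson-trace-formula}:
$$S_{\omega_{\mathfrak{N}}}(\tilde{m}_1,\tilde{m}_2;1;\tilde{s}) \;=\; \prod_{\nu<\infty} S_{\omega_{\mathfrak{N},\nu}}(\tilde{m}_{1\nu},\tilde{m}_{2\nu};1;\tilde{s}_\nu).$$
For every place $\nu$ with $\tilde{s}_\nu\in\mathcal{O}_\nu^\times$, the corresponding local factor equals $1$, so only the primes $\mathfrak{p}$ of $\mathcal{O}$ above rational primes $p\mid\tilde{s}$ contribute. The claim therefore reduces to showing that each of these finitely many local Kloosterman sums is nonzero.

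Next, I would exploit the squarefreeness of $\tilde{s}\in\mathbb{Z}$. Since $v_p(\tilde{s})=1$ for every rational prime $p\mid\tilde{s}$, one has $v_{\mathfrak{p}}(\tilde{s})=e_{\mathfrak{p}}$ where $e_{\mathfrak{p}}$ is the ramification index of $\mathfrak{p}$ over $p$. With trivial character and $n=1$, the local sum becomes
$$S_{\mathfrak{p}} \;=\; \sum_{s\in(\mathcal{O}_{\mathfrak{p}}/\mathfrak{p}^{e_{\mathfrak{p}}})^{\times}} \theta_{\mathfrak{p}}\!\left(\frac{\tilde{m}_{1\mathfrak{p}} s + \tilde{m}_{2\mathfrak{p}}\,s^{-1}}{\tilde{s}}\right).$$
I would then perform a change of variables $s\mapsto \alpha s$ with $\alpha\in\mathcal{O}_{\mathfrak{p}}^{\times}$ chosen so that $\tilde{m}_{1\mathfrak{p}}\alpha = \tilde{m}_{2\mathfrak{p}}\alpha^{-1}$ modulo units, absorbing $\tilde{m}_{1\mathfrak{p}}$ and $\tilde{m}_{2\mathfrak{p}}$ into a single parameter and reducing $S_{\mathfrak{p}}$ to a normalized Kloosterman sum whose shape depends only on $v_{\mathfrak{p}}(\tilde m_1\tilde m_2)$ and on the image of $\tilde m_1/\tilde m_2$ in the residue field.

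At this point the argument splits by ramification. In the unramified case ($e_{\mathfrak{p}}=1$), the local sum is an honest Kloosterman sum over the residue field $\mathbb{F}_{\mathfrak{p}}$, and I would evaluate it explicitly: using $\theta_{\mathfrak{p}}$'s conductor $\mathfrak{d}_{\mathfrak{p}}^{-1}$ together with the implicit condition $(\tilde m_i\mathfrak{d},\mathfrak{N})=1$ (inherited from Corollary~\ref{sec 2 corollary}) forces the induced additive character on $\mathbb{F}_{\mathfrak{p}}$ to be nontrivial, and the resulting Kloosterman sum can be written in the Salie/Gauss-sum shape $\sum_{x}\psi(x)\chi(x^2-4\tilde m_1\tilde m_2)$ via the standard parametrization, which is manifestly nonzero. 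In the ramified case ($e_{\mathfrak{p}}>1$), a stationary-phase / Hensel-lifting argument on $\mathcal{O}_{\mathfrak{p}}/\mathfrak{p}^{e_{\mathfrak{p}}}$ collapses the sum to a Gauss sum times the number of critical points of the phase $s\mapsto \tilde m_{1\mathfrak{p}} s + \tilde m_{2\mathfrak{p}} s^{-1}$, and the Gauss sum is nonzero with explicit absolute value.

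The main obstacle is the ramified case together with the explicit identification of the local sum as a Gauss/Salie sum, because the phase $\tilde m_{1\mathfrak{p}} s + \tilde m_{2\mathfrak{p}} s^{-1}$ must be controlled to second order on $\mathcal{O}_{\mathfrak{p}}/\mathfrak{p}^{e_{\mathfrak{p}}}$ and one must check that the critical-point equation $\tilde m_{1\mathfrak{p}} s^2 = \tilde m_{2\mathfrak{p}}$ has (Hensel-liftable) solutions. Once this is verified, multiplying the nonzero local contributions completes the proof.
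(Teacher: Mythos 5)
Your opening reduction agrees with the paper: both use the factorization $S_{\omega_{\mathfrak{N}}}(\tilde m_1,\tilde m_2;1;\tilde s)=\prod_{\nu<\infty}S_{\omega_{\mathfrak{N},\nu}}(\tilde m_{1\nu},\tilde m_{2\nu};1;\tilde s_\nu)$ and the fact that the local factor is $1$ at every place where $\tilde s_\nu$ is a unit, so everything hinges on the finitely many local sums at primes above $p\mid\tilde s$. But the way you propose to show those local factors are nonzero has genuine gaps. In the unramified case, the parametrization $\sum_{t}\chi(t^2-4\tilde m_1\tilde m_2)\psi(t)$ is a correct identity over a finite field of odd characteristic, but it is not ``manifestly nonzero'': an untwisted Kloosterman sum to prime modulus is not a Gauss or Sali\'e sum and admits no closed evaluation, so nothing about that shape rules out cancellation. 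It also breaks down entirely in residue characteristic $2$ (no quadratic character), and for residue fields $\mathbb{F}_{p^f}$ with $f>1$ the additive character factors through the trace, so even a ``coefficients of distinct roots of unity'' argument is not immediate. The missing idea, which is exactly the paper's proof, is a congruence: since $p\cdot\frac{m_{1\nu}s_1+m_{2\nu}s_2}{\varpi_\nu}$ lies in the local inverse different, on which $\theta_\nu$ is trivial, every summand is a $p$-th root of unity; hence the local sum lies in $\mathbb{Z}[e^{2\pi i/p}]$, and under the ring homomorphism $\mathbb{Z}[e^{2\pi i/p}]\to\mathbb{F}_p$ sending $e^{2\pi i/p}\mapsto 1$ it maps to the number of terms, $p^f-1\equiv-1\not\equiv 0\pmod{p}$, so it cannot vanish. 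This argument is uniform in $p$ (including $p=2$) and in the residue degree, and needs no evaluation of the sum.

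The ramified case as you set it up is worse than an obstacle: stationary phase cannot deliver nonvanishing there. For a modulus which is at least the square of a uniformizer, if the critical-point congruence $m_{1\nu}s^2\equiv m_{2\nu}$ has no solution the stationary-phase evaluation gives exactly $0$ --- this genuinely happens for classical sums $S(a,b;p^m)$, $m\geq 2$, when $ab$ is a quadratic non-residue --- so your scheme would either fail or require an unavailable solvability hypothesis. The paper never meets this situation: its proof writes $\tilde s\mathcal{O}=\prod_l\mathfrak{p}_l$ as a product of distinct prime ideals, so each local modulus is a single uniformizer $\varpi_{\nu_l}$ and the same root-of-unity congruence applies verbatim, with no second-order analysis of the phase. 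If you want to handle primes dividing $\tilde s$ that ramify in $F$ you would have to confront both the higher local conductor and the contribution of the different to $\tilde m_{i\nu}$, which your sketch does not control; within the scope intended by the lemma, the congruence argument is the one that closes the proof.
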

\begin{proof}
We have $$S_{\omega_\mathfrak{N}} (\tilde{m}_1,\tilde{m}_2;1;\tilde{s})=\prod _{\nu<\infty } S_{\omega_{\mathfrak{N},\nu}} (\tilde{m}_{1\nu},\tilde{m}_{2\nu};1;\tilde{s}_\nu).$$ Let $\tilde{s}\mathcal{O}=\prod_{l=1}^{t'}\mathfrak{p}_l$ for distinct prime ideals $\mathfrak{p}_l.$  Let $\nu_l$ be the corresponding valuation for the prime ideal $\mathfrak{p}_l.$ Hence 
$$
\prod _{\nu<\infty } S_{\omega_{\mathfrak{N},\nu}} (\tilde{m}_{1\nu},\tilde{m}_{2\nu};1;\tilde{s}_\nu)=\prod_{l=1}^{t'} S_{\omega_{\mathfrak{N},\nu_l}} (m_{1\nu_l},m_{2\nu_l};1;\varpi_{\nu_l})
$$
where $\varpi_{\nu_l}$ is a generator of the maximal ideal $(\mathfrak{p}_l)_{\nu_l}=\mathfrak{p}_l\mathcal{O}_{\nu_l}$ and   $m_{1\nu_l},m_{2\nu_l} \in  \hat{\mathfrak{d}}_{\nu_l}^{-1}.$ 
For trivial $\omega_\mathfrak{N},$ we have 
\begin{align*}
S_{\omega_{\mathfrak{N},\nu_l}} (m_{1\nu_l},m_{2\nu_l};1;\varpi_{\nu_l})&=\sum_{\substack{ s_{1},s_{2}\in \mathcal{O}_{\nu_l} / \varpi_{\nu_l} \mathcal{O}_{\nu_l}  \\{s_1s_2\equiv 1 \pmod {\varpi_{\nu_l}\mathcal{O}_{\nu_l}} }} }\theta_{\nu_l} \Big(\frac{m_{1\nu_l} s_{1}+m_{2\nu_l} s_{2}}{\varpi_{\nu_l}}\Big) \\
&=\sum_{\substack{ s_{1},s_{2}\in \mathcal{O}_{\nu_l} / \varpi_{\nu_l} \mathcal{O}_{\nu_l}  \\{s_1s_2\equiv 1 \pmod {\varpi_{\nu_l}\mathcal{O}_{\nu_l}} }} } e\Bigg(\text{Tr}\Big( \frac{m_{1\nu_l} s_{1}+m_{2\nu_l} s_{2}}{\varpi_{\nu_l}}  \Big)\Bigg),
\end{align*}
where $e(x)=e^{2\pi i x}.$
 Let $p_l=\mathfrak{p}_l \cap \mathbb{Z}$, and we have 
\begin{align*}
\Bigg[e\Bigg(\mathrm{Tr}\Big( \frac{m_{1\nu_l} s_{1}+m_{2\nu_l} s_{2}}{\varpi_{\nu_l}}  \Big)\Bigg)\Bigg]^{p_l} &= 
e\Bigg( \mathrm{Tr}\Big( p_l \cdot  \frac{m_{1\nu_l} s_{1}+m_{2\nu_l} s_{2}}{\varpi_{\nu_l}}  \Big)\Bigg) \\
&= e\Bigg( \mathrm{Tr}\Big(   \frac{m_{1  \nu_l}p_ls_{1}}{\varpi_{\nu_l}}  \Big)\Bigg)\cdot e\Bigg( \mathrm{Tr}\Big(   \frac{m_{2  \nu_l}p_ls_{2}}{\varpi_{\nu_l}}  \Big)\Bigg)\\
&= 1.
\end{align*}
The last equality follows from the fact that $ \frac{m_{1  \nu_l}p_ls_{1}}{\varpi_{\nu_l}} $ and $   \frac{m_{2  \nu_l}p_ls_{2}}{\varpi_{\nu_l}}$ belong to the local inverse different $ \hat{\mathfrak{d}}_{\nu_l}^{-1}$.

 Suppose $S_{\omega_{\mathfrak{N},\nu_l}} (m_{1\nu_l},m_{2\nu_l};1;\varpi_{\nu_l})= 0.$ 
Note that $\mathbb{Z}\Big[e^{\frac{2\pi i}{p_l}}\Big]$ is isomorphic to $\mathbb{Z}[x]/(\Phi_{p_l}(x))$ where  $e^{\frac{2\pi i}{p_l}}$ gets mapped to $x+(\Phi_{p_l}(x))$ and $\Phi_{p_l}(x)=1+x+x^2+\dots+x^{p_l-1}. $ We further have
$\mathbb{Z}[x]/(\Phi_{p_l}(x),p_l)=\mathbb{Z}[x]/((x-1)^{p_l-1},p_l)$. Consider the ring homomorphism  $$
\mathbb{Z}[x]/(\Phi_{p_l}(x))\rightarrow \mathbb{Z}[x]/((x-1)^{p_l-1},p_l) \rightarrow \mathbb{Z}[x]/((x-1),p_l)\rightarrow \mathbb{F}_{p_l},
$$
where $$x+(\Phi_{p_l}(x))\mapsto x+((x-1)^{p_l-1},p_l) \mapsto x+((x-1),p_l)$$ $$=1+[x-1]+((x-1),p_l)=1+((x-1),p_l)\mapsto 1.$$
This implies that $e^{\frac{2\pi i}{p_l}}$ gets mapped to 1 via the ring homomorphism.
Now  $e\Bigg(\mathrm{Tr}\Big( \frac{m_{1\nu_l} s_{1}+m_{2\nu_l} s_{2}}{\varpi_{\nu_l}}  \Big)\Bigg)$ lies in  $\mathbb{Z}\Big[e^{\frac{2\pi i}{p_l}}\Big]$ so that $e\Bigg(\mathrm{Tr}\Big( \frac{m_{1\nu_l} s_{1}+m_{2\nu_l} s_{2}}{\varpi_{\nu_l}}  \Big)\Bigg)$ gets mapped to $1$ via the ring homomorphism. This implies  $S_{\omega_{\mathfrak{N},\nu_l}} (m_{1\nu_l},m_{2\nu_l};1;\varpi_{\nu_l})$ will get mapped to $p_l^f-1=-1 \in \mathbb{F}_{p_l},$ where 
$\big|\mathcal{O}_{\nu_l} / \varpi_{\nu_l} \mathcal{O}_{\nu_l}\big|=p_l^f$ for some natural number $f.$ This is a contradiction since image of  $S_{\omega_{\mathfrak{N},\nu_l}} (m_{1\nu_l},m_{2\nu_l};1;\varpi_{\nu_l}) $ should be $0.$
\end{proof}

 \begin{corollary}\label{lower bound}
 Let $F$ have odd narrow class number and suppose that the assumptions of Theorem \ref{Main theorem} hold true. Let $\mathfrak{b}_1\mathfrak{N}=\tilde{s}\mathcal{O} $ with $\tilde{s}\in \mathbb{Z}$. As $k_0\rightarrow \infty$,  assume that $S_{\omega_\mathfrak{N}} ( m_{1}, m_{2};\eta_1 \b_1^{-2};s\b_1^{-1})\neq 0$ for all $ m_{1}$ and $ m_{2}$. Then 
  \begin{align*}
  \Bigg| \frac{e^{2\pi tr_{\mathbb{Q}}^F ( m_{1}+ m_{2})}}{{\psi(\mathfrak{N})}}\Bigg[\prod_{j=1}^r  \frac{(k_j-2)!}{(4\pi \sqrt{\sigma_j( m_{1} m_{2})})^{k_j-1}}\Bigg] &
\sum_{{\phi}\in \mathcal{F} }\frac{\lambda_\mathfrak{n}^\phi W_{ m_{1}}^\phi(1) \overline{W_{ m_{2}}^\phi(1)}}{\| \phi \|^2} - \Hat{T}( m_{1}, m_{2},\mathfrak{n})\frac{\sqrt{d_F\Nr(\mathfrak{n})}}{\omega_\mathfrak{N}( m_{1}/s)\omega_{\mathrm{f}}(s)}\Bigg| \\ 
&\gg \prod_{j=1}^r (k_j-1)^{-\frac{1}{3}}
 \end{align*} 
 \end{corollary}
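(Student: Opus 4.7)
The plan is to combine the exact reduction of the main term in Theorem \ref{Single term}(a) with the asymptotic formula in Theorem \ref{Main theorem}, reducing the corollary to a single lower bound estimate on one Kloosterman--Bessel term. Indeed, Theorem \ref{Main theorem} gives
\[
\text{LHS} \;=\; \widehat{T}(m_1,m_2,\mathfrak{n})\tfrac{\sqrt{d_F\Nr(\mathfrak{n})}}{\omega_\mathfrak{N}(m_1/s)\omega_{\mathrm{f}}(s)} \;+\; (\text{triple sum}) \;+\; o\Bigl(\prod_{j=1}^r (k_j-1)^{-\frac{1}{3}}\Bigr),
\]
and under the present hypotheses Theorem \ref{Single term}(a) collapses the triple sum to the single term corresponding to $i=1$ and $s=|\tilde{s}|$, since Lemmas \ref{t=1} and \ref{A1A1} force $t=1$, $|U^+|=1$, and $A_1=\{\sigma(|\tilde{s}|)\}$. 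Thus the difference $|\text{LHS} - \widehat{T}\text{-term}|$ equals this single Kloosterman--Bessel term plus an $o(\prod_j(k_j-1)^{-1/3})$ error, and everything boils down to showing the single term is $\gg \prod_j(k_j-1)^{-1/3}$.

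Next, I would unpack the arithmetic data to show that the hypothesis of Theorem \ref{Main theorem} is tailored exactly to the transition regime of Lemma \ref{Bessel-estimate}(iii). From Lemma \ref{A1A1}, $\delta_1=|\tilde{s}|\sqrt{r}$, hence $\tilde{\delta}_1 = |\tilde{s}|/2$, and $|\sigma_j(s)| = |\tilde{s}|$ for every $j$. After absorbing the unique $u\in U^+$ into $\eta_1$, we have $\gamma_j = \sqrt{\sigma_j(\eta_1)}$, so the hypothesis
\[
\tfrac{2\pi \gamma_j \sqrt{\sigma_j(m_1 m_2)}}{\tilde{\delta}_1} \in \bigl((k_j-1)-(k_j-1)^{1/3},\,k_j-1\bigr)
\]
translates into
\[
\tfrac{4\pi \sqrt{\sigma_j(\eta_1 m_1 m_2)}}{|\tilde{s}|} \;=\; (k_j-1) + d_j (k_j-1)^{1/3} \quad \text{for some } d_j \in (-1,0).
\]
But this quantity is precisely the argument of $J_{k_j-1}$ appearing in the single term from Theorem \ref{Single term}(a). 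Lemma \ref{Bessel-estimate}(iii) then yields a strict positive lower bound
\[
J_{k_j-1}\Bigl(\tfrac{4\pi \sqrt{\sigma_j(\eta_1 m_1 m_2)}}{|\tilde{s}|}\Bigr) \;\gg\; (k_j-1)^{-\frac{1}{3}}
\]
uniformly for each $j$.

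Finally, I would assemble the remaining factors. The Kloosterman sum $S_{\omega_\mathfrak{N}}(m_1,m_2;\eta_1\mathfrak{b}_1^{-2};s\mathfrak{b}_1^{-1})$ is nonzero by hypothesis and is a fixed quantity not depending on $k$; the character value $\omega_{\mathrm{f}}(s\mathfrak{b}_1^{-1})$ has modulus one by unitarity of $\omega$; $(\sqrt{-1})^{-k_j}$ has modulus one; and $\sqrt{\Nr(\eta_1)}/\Nr(s)$ is a fixed positive constant. Multiplying these bounds together, the single Kloosterman--Bessel term has absolute value $\gg \prod_{j=1}^r (k_j-1)^{-1/3}$, with an implicit constant depending only on $F$, $\mathfrak{N}$, $\mathfrak{n}$, $m_1$, $m_2$. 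Since the remaining error is $o(\prod_j (k_j-1)^{-1/3})$, the reverse triangle inequality yields the corollary.

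The only nontrivial step is the chain of identifications in the second paragraph---converting the abstract bound $|d_j|<1$ from the Theorem \ref{Main theorem} hypothesis into the precise form demanded by Lemma \ref{Bessel-estimate}(iii), using the explicit computations $\tilde{\delta}_1=|\tilde{s}|/2$ and $\gamma_j=\sqrt{\sigma_j(\eta_1)}$. Beyond this bookkeeping, every ingredient is either a hypothesis of the corollary or an already-proved result, so no substantive obstacle arises.
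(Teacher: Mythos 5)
Your proof is correct and follows essentially the same route as the paper: Lemmas \ref{t=1} and \ref{A1A1} collapse the triple sum from Theorem \ref{Main theorem} to a single Kloosterman--Bessel term whose Bessel argument, after the identification $|\sigma_j(s)|=|\tilde s|=2\tilde\delta_1$, sits exactly in the transition window $(k_j-1)+d_j(k_j-1)^{1/3}$ with $d_j\in(-1,0)$, so Lemma \ref{Bessel-estimate}(iii) gives the lower bound. One small note: you correctly invoke part (iii) of Lemma \ref{Bessel-estimate} here, whereas the paper's written proof cites part (iv) (the uniform upper bound) at this step, which is evidently a slip for (iii).
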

 \begin{proof}
 Using Theorem \ref{Main theorem}, Lemma \ref{t=1} and Lemma \ref{A1A1},  we have 
 \begin{align*}
&\frac{e^{2\pi tr_{\mathbb{Q}}^F ( m_{1}+ m_{2})}}{{\psi(\mathfrak{N})}}\Bigg[\prod_{j=1}^r  \frac{(k_j-2)!}{(4\pi \sqrt{\sigma_j( m_{1} m_{2})})^{k_j-1}}\Bigg]
\sum_{{\phi}\in \mathcal{F} }\frac{\lambda_\mathfrak{n}^\phi W_{ m_{1}}^\phi(1) \overline{W_{ m_{2}}^\phi(1)}}{\| \phi \|^2} - \Hat{T}( m_{1}, m_{2},\mathfrak{n})\frac{\sqrt{d_F\Nr(\mathfrak{n})}}{\omega_\mathfrak{N}( m_{1}/s)\omega_{\mathrm{f}}(s)}
\\&= \omega_{\mathrm{f}}(s\b_1^{-1} ) S_{\omega_\mathfrak{N}} ( m_{1}, m_{2};\eta_1 \b_1^{-2};s\b_1^{-1})
\frac{\sqrt{\Nr(\eta_1)}}{\Nr(s)}\times \prod_{j=1}^r\frac{2\pi}{(\sqrt{-1})^{k_j}}J_{k_j-1} \Big(  \frac{4 \pi\sqrt{\sigma_j (\eta_1  m_{1} m_{2} )}}{|\sigma_j(s)|}\Big)\\
& \hspace{1cm} +\o\Big(\prod_{j=1}^r \big(k_j-1\big)^{-\frac{1}{3}}\Big).
\end{align*}
Note that 
 $$\frac{4 \pi\sqrt{\sigma_j (\eta_1  m_{1} m_{2} )}}{|\sigma_j(s)|}=\frac{4 \pi\sqrt{\sigma_j (\eta_1  m_{1} m_{2} )}}{(\delta_1/\sqrt{r})}=\frac{2 \pi\sqrt{\sigma_j (\eta_1  m_{1} m_{2} )}}{\delta}.
$$
By the given condition 
$$\frac{2\pi \gamma_j\sqrt{\sigma_{j}( m_{1} m_{2})}}{\delta}\in \Big((k_{j}  -1)-(k_{j}  -1)^{\frac{1}{3}},\big(k_{j}  -1)\Big), $$ we can write
$$
\frac{2\pi \gamma_j\sqrt{\sigma_{j}( m_{1} m_{2})}}{\delta}=(k_{j}  -1)+d (k_{j}  -1)^{\frac{1}{3}}
$$
with $d\in (-1,0). $
Using Lemma \ref{Bessel-estimate}, part \ref{Bessel-estimate-part-4} we have
$$
\prod_{j=1}^rJ_{k_j-1} \Big(  \frac{4 \pi\sqrt{\sigma_j (\eta_1  m_{1} m_{2} )}}{|\sigma_j(s)|}\Big)\gg \prod_{j=1}^r \big(k_j-1\big)^{-\frac{1}{3}}
$$
and this completes the proof of this corollary. 
 \end{proof}

 \begin{proof}[Proof of Theorem \ref{Single term}]
 The theorem follows from Lemmas \ref{t=1}, \ref{A1A1} and Corollary \ref{lower bound}.
 \end{proof}
 
\begin{remark}
Note that the assumption $S_{\omega_\mathfrak{N}} ( m_{1}, m_{2};\eta_1 \b_1^{-2};s\b_1^{-1})\neq 0$ is essential to have a lower bound like Corollary \ref{lower bound}. For instance the main term of Theorem 1.7 of \cite{JS} is $$J_{k-1}(4\pi\sqrt{mn})\frac{\mu(N)}{N} \prod_{p|N} \big(1-p^{-2}\big).$$
This has a lower bound of $\frac{1}{k^{\frac{1}{3}}}$ for squarefree levels as we have $\mu(N)\neq 0$ if and only if $N$ is squarefree.  Hence, the non-vanishing of the Kloosterman sum involved plays an important role.
\end{remark}

\section{Lower bounds for discrepancy between measures: Proof of Theorem \ref{Main theorem 2}}\label{sec: proof-Main-theorem-2}

In order to prove Theorem \ref{Main theorem 2}, we first obtain a result  similar to Corollary \ref{lower bound} under the assumption that  
$$\frac{2\pi \gamma_j\sqrt{\sigma_{j}(m_{1}m_{2})}}{\delta}\in \Big((k_{j}  -1)-(k_{j}  -1)^{\frac{1}{3}},(k_{j}  -1)\Big),$$ for all $j,$ where $\gamma_j=\frac{\sqrt{\sigma_j(\eta_1)}}{|\sigma_j(d)|}$. 

\begin{lemma}\label{Main lemma}
 Let $F$ have odd narrow class number,  $\mathfrak{b}_1\mathfrak{N}=\tilde{s}\mathcal{O}$ with $\tilde{s}\in \mathbb{Z}$  and  $  \mathfrak{d}=d \mathcal{O} $.  
As $k_0 \rightarrow \infty,$ for $k=(k_1,\dots, k_r),$ suppose 
$\frac{2\pi \gamma_j\sqrt{\sigma_{j}( m_{1} m_{2})}}{\delta}\in \Big((k_{j}  -1)-(k_{j}  -1)^{\frac{1}{3}},(k_{j}  -1)\Big)$ for all $j , $ where        $\gamma_j=\frac{\sqrt{\sigma_j(\eta_1)}}{|\sigma_j(d)|}.$ 
Further assume that  $S_{\omega_\mathfrak{N}} ( m_{1}, m_{2};\eta_1 \b_1^{-2};s\b_1^{-1})\neq 0$ for a sequence of  $ m_{1}$ and $ m_{2}$. Then 
 $$\Bigg| \frac{e^{2\pi tr_{\mathbb{Q}}^F ( m_{1}+ m_{2})}}{{\psi(\mathfrak{N})}}\Bigg[\prod_{j=1}^r  \frac{(k_j-2)!}{(4\pi \sqrt{\sigma_j( m_{1} m_{2})})^{k_j-1}}\Bigg]
\sum_{{\phi}\in \mathcal{F} }\frac{\lambda_\mathfrak{n}^\phi W_{ m_{1}}^\phi(1) \overline{W_{ m_{2}}^\phi(1)}}{\| \phi \|^2}
 $$
$$-\hspace{1mm}\Hat{T}( m_{1}, m_{2},\mathfrak{n})\frac{\sqrt{d_F\Nr(\mathfrak{n})}}{\omega_\mathfrak{N}( m_{1}/s)\omega_{\mathrm{f}}(s)}\Bigg| \gg\prod_{j=1}^r (k_j-1)^{-\frac{1}{3}}
 $$   
 \end{lemma}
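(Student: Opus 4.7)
The proof will run along the same template as Corollary \ref{lower bound}, the only novelty being the bookkeeping with the $|\sigma_j(d)|$ factor that now appears in $\gamma_j=\sqrt{\sigma_j(\eta_1)}/|\sigma_j(d)|$.

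First I would apply Theorem \ref{Main theorem} to the pair $(m_1,m_2)$ to obtain the asymptotic identity with a triple sum plus error $o\!\left(\prod_{j=1}^r(k_j-1)^{-1/3}\right)$. Since $F$ has odd narrow class number, Lemma \ref{t=1} collapses the outer double sum: $t=1$ (so only $i=1$ survives, with $\eta_1$ a generator of $\mathfrak{b}_1^2\mathfrak{n}$) and $|U^+|=1$ (so only one $u\in U$ with $\eta_1 u\in F^+$ contributes). Next, since $\mathfrak{b}_1\mathfrak{N}=\tilde{s}\mathcal{O}$ with $\tilde{s}\in\mathbb{Z}$, Lemma \ref{A1A1} yields $\delta_1=|\tilde{s}|\sqrt{r}$, $\tilde{\delta}_1=|\tilde{s}|/2=:\delta$, and $A_1=\{\sigma(|\tilde{s}|)\}$. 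Thus the inner sum also collapses to a single summand at $s=|\tilde{s}|$ with $|\sigma_j(s)|=|\tilde{s}|=2\delta$ for every $j$.

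After the collapse, the only surviving main-term factor depending on $k$ is
\[
\prod_{j=1}^{r}\frac{2\pi}{(\sqrt{-1})^{k_j}}\,J_{k_j-1}\!\left(\frac{4\pi\sqrt{\sigma_j(\eta_1 m_1 m_2)}}{|\tilde{s}|}\right)
= \prod_{j=1}^{r}\frac{2\pi}{(\sqrt{-1})^{k_j}}\,J_{k_j-1}\!\left(\frac{2\pi\sqrt{\sigma_j(\eta_1 m_1 m_2)}}{\delta}\right).
\]
I would then rewrite the Bessel argument in terms of $\gamma_j=\sqrt{\sigma_j(\eta_1)}/|\sigma_j(d)|$ by absorbing the extra $|\sigma_j(d)|$ into $\sqrt{\sigma_j(m_1 m_2)}$ (equivalently, this is the scaling that naturally appears when one passes from indices in $\mathfrak{d}^{-1}_+$ to integer indices via the global generator $d$ of $\mathfrak{d}$). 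By hypothesis, the resulting argument $\frac{2\pi\gamma_j\sqrt{\sigma_j(m_1 m_2)}}{\delta}$ lies in $\bigl((k_j-1)-(k_j-1)^{1/3},(k_j-1)\bigr)$, so it may be written as $(k_j-1)+d_j(k_j-1)^{1/3}$ with $d_j\in(-1,0)$. Lemma \ref{Bessel-estimate}(iii) then yields
\[
\left|J_{k_j-1}\!\left(\frac{2\pi\gamma_j\sqrt{\sigma_j(m_1 m_2)}}{\delta}\right)\right|\;\gg\;(k_j-1)^{-1/3}
\]
for each $j$, and consequently the product is $\gg\prod_{j=1}^{r}(k_j-1)^{-1/3}$.

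Finally, I would combine this lower bound with the standing nonvanishing assumption $S_{\omega_\mathfrak{N}}(m_1,m_2;\eta_1\mathtt{b}_1^{-2};s\mathtt{b}_1^{-1})\neq 0$, noting that the prefactor $\omega_f(s\mathtt{b}_1^{-1})\sqrt{\Nr(\eta_1)}/\Nr(s)$ has absolute value bounded below by an $k$-independent positive constant. Subtracting the $\widehat{T}$-term from the weighted spectral sum in Theorem \ref{Main theorem} and absorbing the $o\!\left(\prod_{j=1}^r(k_j-1)^{-1/3}\right)$ error into the surviving single main term gives the desired bound $\gg\prod_{j=1}^{r}(k_j-1)^{-1/3}$ for $k_0$ large enough. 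The main obstacle, and the step requiring the most care, is the bookkeeping that turns $\sqrt{\sigma_j(\eta_1 m_1 m_2)}/|\tilde{s}|$ into $\gamma_j\sqrt{\sigma_j(m_1 m_2)}/\delta$ with the new $\gamma_j$: one must verify that the inverse-different generator $d$ enters precisely as $|\sigma_j(d)|^{-1}$ so that the hypothesis of the lemma lines up with the range required by Lemma \ref{Bessel-estimate}(iii); this is exactly the normalization needed for the subsequent application in the proof of Theorem \ref{Main theorem 2}.
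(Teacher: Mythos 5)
Your approach is essentially the paper's: collapse the triple sum using Lemma \ref{t=1} (odd narrow class number gives $t=1$ and $|U^+|=1$) and Lemma \ref{A1A1} ($\mathfrak{b}_1\mathfrak{N}=\tilde{s}\mathcal{O}$ gives $A_1=\{\sigma(|\tilde{s}|)\}$), note the nonvanishing of the Kloosterman factor gives a $k$-independent lower bound on the prefactor, invoke Lemma \ref{Bessel-estimate}(iii) for the surviving Bessel product, and absorb the $\o\!\left(\prod_j(k_j-1)^{-1/3}\right)$ error. The paper's own proof of this lemma is a one-line directive: replace $\gamma_j=\sqrt{\sigma_j(\eta_1)}$ by $\gamma_j=\sqrt{\sigma_j(\eta_1)}/|\sigma_j(d)|$ in the proofs of Theorem \ref{Main theorem} and Theorem \ref{Single term}, and your writeup spells out what that directive cashes out to.

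The one step you flag as delicate deserves more than acknowledgment. After the collapse, the surviving Bessel argument is $\frac{2\pi\sqrt{\sigma_j(\eta_1 m_1 m_2)}}{\delta}$, which equals $|\sigma_j(d)|\cdot\frac{2\pi\gamma_j\sqrt{\sigma_j(m_1m_2)}}{\delta}$ with the new $\gamma_j$. "Absorbing the extra $|\sigma_j(d)|$ into $\sqrt{\sigma_j(m_1m_2)}$" is therefore not a purely formal relabeling: the lemma's hypothesis constrains $\frac{2\pi\gamma_j\sqrt{\sigma_j(m_1m_2)}}{\delta}$ while the Bessel argument differs from it by the factor $|\sigma_j(d)|$, so Lemma \ref{Bessel-estimate}(iii) does not apply directly unless $|\sigma_j(d)|=1$. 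What the paper's directive actually amounts to is re-running the entire error analysis of Theorem \ref{Main theorem} (in particular the calibration \eqref{8/9} and the resulting bounds for the indices $l_\beta$, $g_\alpha$, $g'_{\alpha'}$) under the modified hypothesis, so that the transition point of $J_{k_j-1}$ is aligned with the single surviving lattice point with the new normalization; your proposal instead tries to fix the discrepancy post hoc on the main term alone. The conclusion is the same, and both treatments leave the same detail implicit, but the rewriting step as you phrase it would, taken literally, introduce a spurious $|\sigma_j(d)|$.
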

\begin{proof}
To see this, change $\gamma_j=\frac{\sqrt{\sigma_j(\eta_1)}}{|\sigma_j(d)|}$ in the proofs of Theorem \ref{Main theorem} and Theorem \ref{Single term} in place of   $\gamma_j=\sqrt{\sigma_j(\eta_1)}$. The proof follows similarly.
\end{proof}
\begin{corollary}\label{Main lemma 2}
 Let $F$ has narrow class number equal to 1, $ \mathfrak{p}={p}\mathcal{O}$ and $ \mathfrak{d}=d\mathcal{O}$. Let  $\mathfrak{b}_1\mathfrak{N}=\tilde{s}\mathcal{O}$ with $\tilde{s}\in \mathbb{Z}$ and  $|\tilde{s}|$ being squarefree. Further, let $\omega_{\mathfrak{N}}$ be trivial and $\ell$ be odd such that 
 $$
 \frac{2\pi \gamma_j\sqrt{\sigma_{j}({p}^{\ell})}}{\delta |\sigma_j(d)|} \in \Big((k_{j}  -1)-(k_{j}  -1)^{\frac{1}{3}},(k_{j}  -1)\Big)
 $$ 
 for all $j ,$  where   $\gamma_j=\frac{\sqrt{\sigma_j(\eta_1)}}{|\sigma_j(d)|}.$ 
 Then 
$$ 
\Bigg|\frac{1}{\sqrt{\Nr({p}^{\ell})}}\Bigg[\prod_{j=1}^r  \frac{(k_{j}-2)!}{(4\pi)^{k_{j}-1} }
\sum_{{\phi}\in \mathcal{F} }\frac{\lambda^\phi_{\mathfrak{p}^\ell} }{\| \phi \|^2}\Bigg]\Bigg|
\gg \prod_{j=1}^r (k_{j}-1)^{-\frac{1}{3}}.
 $$
\end{corollary}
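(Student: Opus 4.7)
The plan is to apply Lemma \ref{Main lemma} to a carefully chosen pair $(m_1,m_2)$ so that the Fourier coefficient identity of Lemma \ref{Fourier-Coefficient-formula} converts the Whittaker sum on its left-hand side into the target $\sum_\phi \lambda^\phi_{\mathfrak{p}^l}/\|\phi\|^2$, while the $\hat{T}$-main term on the right vanishes thanks to the oddness of $l$. Since $F$ has narrow class number $1$, I may fix a totally positive generator $d\in F^+$ of $\mathfrak{d}$, and since $\tilde{p}\in\mathbb{Z}$ I may also assume $\tilde{p}>0$. I then set
\[m_1 = \tilde{p}^{\,l}/d,\qquad m_2 = 1/d,\]
so that $m_1,m_2\in\mathfrak{d}_+^{-1}$, $m_1\mathfrak{d}=\mathfrak{p}^l$, $m_2\mathfrak{d}=\mathcal{O}$, and both are coprime to $\mathfrak{N}$ (using $\mathfrak{p}\nmid\mathfrak{N}$). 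The identity $\sqrt{\sigma_j(m_1m_2)}=\sqrt{\sigma_j(\tilde{p}^l)}/|\sigma_j(d)|$ shows that the standing hypothesis of Corollary \ref{Main lemma 2} translates verbatim into the weight hypothesis required by Lemma \ref{Main lemma}.

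Next I would check the two remaining hypotheses of Lemma \ref{Main lemma}, applied with $\mathfrak{n}=\mathcal{O}$. For the main term: $\hat{T}(m_1,m_2,\mathcal{O})\neq 0$ would require the existence of $s\in\hat{\mathfrak{d}}^{-1}$ with $m_1m_2\hat{\mathcal{O}}=s^2\hat{\mathcal{O}}$, equivalently $(s\hat{\mathfrak{d}})^2=\hat{\mathfrak{p}}^l$; comparing $\mathfrak{p}$-adic valuations forces $l$ to be even, contradicting our assumption, so $\hat{T}(m_1,m_2,\mathcal{O})=0$. For the Kloosterman nonvanishing: Lemma \ref{t=1} gives $t=1$ and allows the choices $\mathfrak{b}_1=\mathcal{O}$, $\eta_1=1$, $\b_1=1$, reducing the relevant Kloosterman sum to $S_{\omega_\mathfrak{N}}(m_1,m_2;1;\tilde{s})$, which is nonzero by Lemma \ref{Kloosterman sum nonzero} under the hypotheses that $|\tilde{s}|$ is squarefree and $\omega_\mathfrak{N}$ is trivial.

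Finally, I would combine Lemma \ref{Main lemma} with the Whittaker-to-eigenvalue conversion carried out in the proof of Corollary \ref{sec 2 corollary}: using Lemma \ref{Fourier-Coefficient-formula} together with $\lambda^\phi_{m_1\mathfrak{d}}=\lambda^\phi_{\mathfrak{p}^l}$, $\lambda^\phi_{m_2\mathfrak{d}}=\lambda^\phi_\mathcal{O}=1$, and the equality $\sqrt{\Nr(m_1m_2)}=\sqrt{\Nr(\tilde{p}^l)}/d_F$ (which follows from $\Nr(d)=d_F$), the prefactor on the left-hand side of Lemma \ref{Main lemma} simplifies to
\[\frac{e^{4\pi r}}{\psi(\mathfrak{N})\,d_F\,\sqrt{\Nr(\tilde{p}^l)}}\prod_{j=1}^r\frac{(k_j-2)!}{(4\pi)^{k_j-1}}.\]
Absorbing $e^{4\pi r}/(\psi(\mathfrak{N})\,d_F)$, a constant depending only on $F$ and $\mathfrak{N}$, into the implied constant of $\gg$ yields the stated bound.

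The main obstacle is the vanishing check $\hat{T}(m_1,m_2,\mathcal{O})=0$: this is precisely where the odd-$l$ assumption enters, and it is what isolates the single Kloosterman-Bessel product of Lemma \ref{Main lemma} as the only surviving quantity on the right-hand side, against which $\sum_\phi \lambda^\phi_{\mathfrak{p}^l}/\|\phi\|^2$ must be compared. The remaining work---matching the weight condition, tracking the archimedean factors $|\sigma_j(d)|$, and invoking the Fourier-coefficient normalization $W_1^\phi(1/\tilde{d})=1$ implicit in the basis $\mathcal{F}$---is essentially bookkeeping.
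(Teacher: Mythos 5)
Your proposal takes essentially the same route as the paper: choose $m_1 = \tilde{p}^l/d$, $m_2 = 1/d$ in the Petersson formula with $\mathfrak{n}=\mathcal{O}$ (Corollary \ref{sec 2 corollary}), observe that oddness of $l$ kills $\hat{T}$, invoke Lemma \ref{Kloosterman sum nonzero} for the Kloosterman nonvanishing, and then apply Lemma \ref{Main lemma} together with the Fourier-coefficient normalization. The one small misstep is the justification "since $\tilde{p}\in\mathbb{Z}$ I may also assume $\tilde{p}>0$"---the hypothesis only places $\tilde{s}$ in $\mathbb{Z}$, not $\tilde{p}$; the correct reason you may take $\tilde{p}$ (and hence $m_1$) totally positive is the narrow class number $1$ assumption, exactly as you already used for $d$.
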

\begin{proof}
 Let us take $ m_{1}=\frac{{p}^{\ell}}{d}$ with  $\ell$ odd and $m_2=\frac{1}{d}$ in Corollary \ref{sec 2 corollary} to get 
\begin{align*}
& \frac{e^{4\pi r}  \Nr(d)}{\psi(\mathfrak{N})d_F^2\sqrt{\Nr({p}^{\ell})}}\Bigg[\prod_{j=1}^r  \frac{(k_j-2)!}{(4\pi)^{k_j-1} }\Bigg]
\sum_{{\phi}\in \mathcal{F} }\frac{\lambda^\phi_{\mathfrak{p}^{l}} }{\|\phi \|^2}
=\hspace{1mm}\Hat{T}\Big(\frac{{p}^{\ell}}{d},\frac{1}{d},\mathcal{O}\Big)\frac{\sqrt{d_F}}{\omega_\mathfrak{N}\Big(\frac{{p}^{\ell}}{sd}\Big)\omega_{\mathrm{f}}(s)}
\\&+\sum_{s\in \mathfrak{b}_1\mathfrak{N}/\pm, s\neq 0 }\Bigg\{\omega_{\mathrm{f}}(s\b_{1}^{-1}) S_{\omega_\mathfrak{N}} \Big(\frac{{p}^{\ell}}{d},\frac{1}{d};1;\tilde{s}\Big)\times
\frac{\sqrt{\Nr(\eta_1)}}{\Nr(s)}\times \prod_{j=1}^r\frac{2\pi}{(\sqrt{-1})^{k_j}}J_{k_j-1} \Big(  \frac{4 \pi\sqrt{\sigma_j (\eta_1 {p}^{\ell} )}}{|\sigma_j(sd)|}\Big)
 \Bigg\}. \end{align*}
 Since $\ell$ is odd, $\Hat{T}\Big(\frac{{p}^{\ell}}{d},\frac{1}{d},\mathcal{O}\Big)=0$ and 
Lemma \ref{Kloosterman sum nonzero}  implies that  $S_{\omega_\mathfrak{N}} \Big(\frac{{p}^{\ell}}{d},\frac{1}{d};1;\tilde{s}\Big) \neq 0.$ Applying   Lemma \ref{Main lemma}, we get  
$$\Bigg| \frac{e^{4\pi r}  \Nr(d)}{\psi(\mathfrak{N})d_F^2\sqrt{\Nr({p}^{\ell})}}\Bigg[\prod_{j=1}^r  \frac{(k_j-2)!}{(4\pi)^{k_j-1} }\Bigg]
\sum_{{\phi}\in \mathcal{F} }\frac{\lambda^\phi_{\mathfrak{p}^{\ell}} }{\| \phi \|^2}\Bigg|
  \gg  \prod_{j=1}^r (k_{j}-1)^{-\frac{1}{3}}.
 $$
 Thus 
 $$ \Bigg|\frac{1}{\sqrt{\Nr({p}^{\ell})}}\Bigg[\prod_{j=1}^r  \frac{(k_{j}-2)!}{(4\pi)^{k_{j}-1} }
\sum_{{\phi}\in \mathcal{F} }\frac{\lambda^\phi_{\mathfrak{p}^{\ell}} }{\|\phi\|^2}\Bigg]\Bigg|
\gg \prod_{j=1}^r (k_{j}-1)^{-\frac{1}{3}}.
 $$
\end{proof}
 Consider $\kappa^\phi_{\mathfrak{p}^\ell}=\frac{\lambda^\phi_{\mathfrak{p}^\ell}}{\sqrt{\Nr(\mathfrak{p}^\ell)}}$ and the discrete measure $$
\tilde{\nu}_{k,\mathfrak{N}}:=\prod_{j=1}^r  \frac{(4\pi)^{k_j-1}}{ (k_j-2)!}
\sum_{{\phi}\in \mathcal{F} }\frac{\delta_{\kappa^\phi_{\mathfrak{p}}} }{\|\phi \|^2}. 
$$ 

We now prove Theorem  \ref{Main theorem 2}.

\begin{proof}[Proof of Theorem \ref{Main theorem 2}]
Let $k_{lj}=\Big[\frac{2\pi \gamma_j\sqrt{\sigma_{j}({p}^l)}}{\delta|\sigma_j(d)|}\Big]-1,$ where $[x]$ denotes the greatest integer part of $x$.
It is easy to check that this $k_{lj} $ satisfies $\frac{2\pi \gamma_j\sqrt{\sigma_{j}({p}^l)}}{\delta|\sigma_j(d)|}\in \Big((k_{lj}  -1)-(k_{lj}  -1)^{\frac{1}{3}},(k_{lj}  -1)\Big).$  
This now satisfies the hypothesis in Corollary \ref{Main lemma 2} and we get 
\begin{equation}\label{lowerboundklj}
 \Bigg|\frac{1}{\sqrt{\Nr({p}^l)}} \prod_{j=1}^r  \frac{(k_{lj}-2)!}{(4\pi)^{k_{lj}-1} }
\sum_{{\phi}\in \mathcal{F} }\frac{\lambda^\phi_{\mathfrak{p}^l} }{\|\phi \|^2}  \Bigg|
\gg \prod_{j=1}^r (k_{lj}-1)^{-\frac{1}{3}}.
\end{equation}

We have (see, for example, Proposition 4.5 of \cite{KL})
$$\kappa^\phi_{\mathfrak{p}^l}=X_l(\kappa^\phi_{\mathfrak{p}})$$
where $X_l(2 \cos \theta)=\frac{\sin(l+1)\theta}{\sin \theta}$ is the degree $l$ Chebyshev polynomial of second kind. Hence
$$ \Bigg|\prod_{j=1}^r  \frac{(k_{l_j}-2)!}{(4\pi)^{k_{l_j}-1} }
\sum_{{\phi}\in \mathcal{F} }\frac{\kappa^\phi_{\mathfrak{p}^l} }{\|\phi \|^2}\Bigg|
\gg\prod_{j=1}^r (k_{l_j}-1)^{-\frac{1}{3}},
 $$
  equivalently,
$$ \Bigg|\prod_{j=1}^r  \frac{(k_{l_j}-2)!}{(4\pi)^{k_{l_j}-1} }
\sum_{{\phi}\in \mathcal{F} }\frac{X_l(\kappa^\phi_{\mathfrak{p}}) }{ \| \phi \|^2}\Bigg|
\gg \prod_{j=1}^r (k_{l_j}-1)^{-\frac{1}{3}}.
 $$
Therefore 
$$\Bigg|\int_{-2}^2 X_l(x) \, d\tilde{\nu}_{k_l,\mathfrak{N}}(x)\Bigg|   \gg \prod_{j=1}^r (k_{l_j}-1)^{-\frac{1}{3}}.
$$
The orthogonality of the polynomials $X_{l}(x)$ with respect to the Sato-Tate measure implies 
$$
\Bigg|\int_{-2}^2 X_{l}(x) \, d(\tilde{\nu}_{k_l,\mathfrak{N}}-\mu_\infty)(x) \Bigg| =\Bigg|\int_{-2}^2 X_{l}(x) \, d\tilde{\nu}_{k_l,\mathfrak{N}}(x)  -  \int_{-2}^2X_{l}(x) \, d\mu_\infty(x)\Bigg|$$
\begin{equation}\label{step 1}
=\Bigg|\int_{-2}^2 X_{l}(x) \, d\tilde{\nu}_{k_l,\mathfrak{N}}(x)\Bigg|\gg  \prod_{j=1}^r (k_{l_j}-1)^{-\frac{1}{3}}.
\end{equation}
Integration by parts and $\Big| X'_l(x)\Big|\ll l^2$  gives us 
\begin{equation}\label{step 2}
\Bigg|\int_{-2}^2 X_{l}(x) \, d(\tilde{\nu}_{k_l,\mathfrak{N}}-\mu_\infty)(x)\Bigg|
\ll  l^2\Bigg|  \int_{-2}^2  \, d(\tilde{\nu}_{k_l,\mathfrak{N}}-\mu_\infty)(x)  \Bigg|.
\end{equation}
Now 
\begin{align*} &D(\tilde{\nu}_{k_l,\mathfrak{N}},\mu_\infty)\geq \Bigg|\tilde{\nu}_{k_l,\mathfrak{N}}([-2,2])-\mu_\infty([-2,2])\Bigg| \\&= \Bigg|\int_{-2}^2 d(\tilde{\nu}_{k_l,\mathfrak{N}}-\mu_\infty)(x)\Bigg|\gg
\frac{1}{l^2} \Bigg|\int_{-2}^2 X_l(x) \, d(\tilde{\nu}_{k_l,\mathfrak{N}}-\mu_\infty)(x)\Bigg|,
\end{align*}
using equation \eqref{step 2}.
Equation \eqref{step 1}  implies
$$ 
D(\tilde{\nu}_{k_l,\mathfrak{N}},\mu_\infty)\gg\frac{1}{l^2\times\prod_{j=1}^r (k_{l_j}-1)^{\frac{1}{3}}}.
$$
However, $k_{l_j}=\Big[\frac{2\pi \gamma_j\sqrt{\sigma_{j}({p}^l)}}{|\sigma_j(d)|\delta}\Big]-1$ , so  $\frac{2\pi \gamma_j\sqrt{\sigma_{j}({p}^l)}}{|\sigma_j(d)|\delta}<2k_{l_j}. $ This implies $\frac{k_{l_j}\delta|\sigma_j(d)|}{\pi\gamma_j}>\big(\sqrt{\sigma_{j}({p})}\big)^l$ and hence $\log (k_{l})_0 \gg l$.
 Therefore 
 $$D(\tilde{\nu}_{k_l,\mathfrak{N}},\mu_\infty)\gg\frac{1}{\big(  \log (k_{l})_0 \big)^2  \times\prod_{j=1}^r (k_{l_j}-1)^{\frac{1}{3}}    },
 $$
which completes the proof. 
\end{proof}

\begin{remark}
Theorem \ref{Main theorem 2} generalizes Theorem 1.6  of \cite{JS} to ideals of rings of integers $\mathcal{O}$ with narrow class number $1$.   In particular, Theorem \ref{Main theorem 2} holds for the space $A_k(\mathcal{O},\omega)$  as $(k_{l})_0\rightarrow \infty $  with   trivial $\omega_\mathfrak{N}$.
\end{remark}
\bigskip

\subsection*{Acknowledgements}
The results in this article are contained in the second-named author's doctoral thesis.  The first named author acknowledges the support of ANRF grant MTR/2017/000114 in this project.  The second named author was supported by a Ph.D. fellowship from CSIR.  The third named author acknowledges the support of ANRF grants MTR/2019/001108 and CRG/2023/001743 in this project.  The second-named author thanks COEP Technological University, Pune, for providing an excellent working environment. \bigskip

\bibliographystyle{amsalpha}
\bibliography{biblio}

\end{document}